\documentclass[12pt]{amsart}
\usepackage{}
\usepackage{amsmath}
\usepackage{amsfonts}
\usepackage{amssymb}
\usepackage[all,cmtip]{xy}           

\usepackage{bbm}
\usepackage{bbding}
\usepackage{txfonts}
\usepackage[shortlabels]{enumitem}
\usepackage{ifpdf}
\ifpdf
\usepackage[colorlinks,final,backref=page,hyperindex]{hyperref}
\else
\usepackage[colorlinks,final,backref=page,hyperindex,hypertex]{hyperref}
\fi
\usepackage{amscd}
\usepackage{tikz-cd}
\usepackage[active]{srcltx}

\topmargin -.8cm \textheight 22.8cm \oddsidemargin 0cm \evensidemargin -0cm \textwidth 16.3cm

\makeatletter

\newtheorem{theorem}{Theorem}[section]
\newtheorem{prop}[theorem]{Proposition}
\newtheorem{lemma}[theorem]{Lemma}
\newtheorem{coro}[theorem]{Corollary}
\newtheorem{prop-def}{Proposition-Definition}[section]

\theoremstyle{definition}
\newtheorem{defn}[theorem]{Definition}

\newcommand{\nc}{\newcommand}


\nc{\delete}[1]{{}}
\nc{\mmargin}[1]{}
\nc{\Alg}{\mathrm{Alg}}
\nc{\AvO}{\mathrm{AvO}}
\nc{\AvA}{\mathrm{AvA}}
\nc{\rmH}{\mathrm{H}}
\nc{\mlabel}[1]{\label{#1}}  
\nc{\mcite}[1]{\cite{#1}}  
\nc{\mref}[1]{\ref{#1}}  
\nc{\mbibitem}[1]{\bibitem{#1}} 

\delete{
	\nc{\mlabel}[1]{\label{#1}  
		{\hfill \hspace{1cm}{\bf{{\ }\hfill(#1)}}}}
	\nc{\mcite}[1]{\cite{#1}{{\bf{{\ }(#1)}}}}  
	\nc{\mref}[1]{\ref{#1}{{\bf{{\ }(#1)}}}}  
	\nc{\mbibitem}[1]{\bibitem[\bf #1]{#1}} 
}


 \font\cyrs=wncyr7


\newcommand{\bk}{{\mathbf{k}}}


\nc{\vep}{\varepsilon}
\nc{\bin}[2]{ (_{\stackrel{\scs{#1}}{\scs{#2}}})}  
\nc{\binc}[2]{(\!\! \begin{array}{c} \scs{#1}\\
		\scs{#2} \end{array}\!\!)}  
\nc{\bincc}[2]{  ( {\scs{#1} \atop
		\vspace{-1cm}\scs{#2}} )}  
\nc{\oline}[1]{\overline{#1}}
\nc{\mapm}[1]{\lfloor\!|{#1}|\!\rfloor}
\nc{\bs}{\bar{S}}
\nc{\la}{\longrightarrow}
\nc{\ot}{\otimes}
\nc{\rar}{\rightarrow}
\nc{\lon }{\,\rightarrow\,}
\nc{\dar}{\downarrow}
\nc{\dap}[1]{\downarrow \rlap{$\scriptstyle{#1}$}}
\nc{\defeq}{\stackrel{\rm def}{=}}
\nc{\dis}[1]{\displaystyle{#1}}
\nc{\dotcup}{\ \displaystyle{\bigcup^\bullet}\ }
\nc{\hcm}{\ \hat{,}\ }
\nc{\hts}{\hat{\otimes}}
\nc{\hcirc}{\hat{\circ}}
\nc{\lleft}{[}
\nc{\lright}{]}
\nc{\curlyl}{\left \{ \begin{array}{c} {} \\ {} \end{array}
	\right .  \!\!\!\!\!\!\!}
\nc{\curlyr}{ \!\!\!\!\!\!\!
	\left . \begin{array}{c} {} \\ {} \end{array}
	\right \} }
\nc{\longmid}{\left | \begin{array}{c} {} \\ {} \end{array}
	\right . \!\!\!\!\!\!\!}
\nc{\ora}[1]{\stackrel{#1}{\rar}}
\nc{\ola}[1]{\stackrel{#1}{\la}}
\nc{\scs}[1]{\scriptstyle{#1}} \nc{\mrm}[1]{{\rm #1}}
\nc{\dirlim}{\displaystyle{\lim_{\longrightarrow}}\,}
\nc{\invlim}{\displaystyle{\lim_{\longleftarrow}}\,}
\nc{\dislim}[1]{\displaystyle{\lim_{#1}}} \nc{\colim}{\mrm{colim}}
\nc{\mvp}{\vspace{0.3cm}} \nc{\tk}{^{(k)}} \nc{\tp}{^\prime}
\nc{\ttp}{^{\prime\prime}} \nc{\svp}{\vspace{2cm}}
\nc{\vp}{\vspace{8cm}}
\nc{\modg}[1]{\!<\!\!{#1}\!\!>}
\nc{\intg}[1]{F_C(#1)}
\nc{\lmodg}{\!<\!\!}
\nc{\rmodg}{\!\!>\!}
\nc{\cpi}{\widehat{\Pi}}
\nc{\ssha}{{\mbox{\cyrs X}}} 
\nc{\tsha}{{\mbox{\cyrt X}}}
\nc{\shpr}{\diamond}    
\nc{\labs}{\mid\!}
\nc{\rabs}{\!\mid}


\nc{\ad}{\mrm{ad}}
\nc{\ann}{\mrm{ann}}
\nc{\Aut}{\mrm{Aut}}
\nc{\bim}{\mbox{-}\mathsf{Bimod}}
\nc{\br}{\mrm{bre}}
\nc{\can}{\mrm{can}}
\nc{\Cont}{\mrm{Cont}}
\nc{\rchar}{\mrm{char}}
\nc{\cok}{\mrm{coker}}
\nc{\de}{\mrm{dep}}
\nc{\dtf}{{R-{\rm tf}}}
\nc{\dtor}{{R-{\rm tor}}}

\nc{\Div}{{\mrm Div}}
\nc{\Diff}{\mrm{DA}}
\nc{\Diffl}{\mathsf{DA}_\lambda}
\nc{\diffo}{{\mathsf{DO}_\lambda}}
\nc{\alg}{\mathsf{Alg}}
\nc{\End}{\mrm{End}}
\nc{\Ext}{\mrm{Ext}}
\nc{\Fil}{\mrm{Fil}}
\nc{\Fr}{\mrm{Fr}}
\nc{\Frob}{\mrm{Frob}}
\nc{\Gal}{\mrm{Gal}}
\nc{\GL}{\mrm{GL}}
\nc{\Hom}{\mrm{Hom}}
\nc{\Hoch}{\mrm{Hoch}}
\nc{\hsr}{\mrm{H}}
\nc{\hpol}{\mrm{HP}}
\nc{\id}{\mrm{id}}
\nc{\im}{\mrm{im}}
\nc{\Id}{\mrm{Id}}
\nc{\ID}{\mrm{ID}}
\nc{\Irr}{\mrm{Irr}}
\nc{\incl}{\mrm{incl}}
\nc{\length}{\mrm{length}}
\nc{\NLSW}{\mrm{NLSW}}
\nc{\Lie}{\mrm{Lie}}
\nc{\mchar}{\rm char}
\nc{\mpart}{\mrm{part}}
\nc{\ql}{{\QQ_\ell}}
\nc{\qp}{{\QQ_p}}
\nc{\rank}{\mrm{rank}}
\nc{\rcot}{\mrm{cot}}
\nc{\rdef}{\mrm{def}}
\nc{\rdiv}{{\rm div}}
\nc{\rtf}{{\rm tf}}
\nc{\rtor}{{\rm tor}}
\nc{\res}{\mrm{res}}
\nc{\SL}{\mrm{SL}}
\nc{\Spec}{\mrm{Spec}}
\nc{\tor}{\mrm{tor}}
\nc{\Tr}{\mrm{Tr}}
\nc{\tr}{\mrm{tr}}
\nc{\wt}{\mrm{wt}}
\def\ot{\otimes}


\nc{\bfk}{{\bf k}}
\nc{\bfone}{{\bf 1}}
\nc{\bfzero}{{\bf 0}}
\nc{\detail}{\marginpar{\bf More detail}
	\noindent{\bf Need more detail!}
	\svp}
\nc{\gap}{\marginpar{\bf Incomplete}\noindent{\bf Incomplete!!}
	\svp}
\nc{\FMod}{\mathbf{FMod}}
\nc{\Int}{\mathbf{Int}}
\nc{\Mon}{\mathbf{Mon}}
\nc{\remarks}{\noindent{\bf Remarks: }}
\nc{\Rep}{\mathbf{Rep}}
\nc{\Rings}{\mathbf{Rings}}
\nc{\Sets}{\mathbf{Sets}}
\nc{\ob}{\mathsf{Ob}}

\nc{\BA}{{\mathbb A}}   \nc{\CC}{{\mathbb C}}
\nc{\DD}{{\mathbb D}}   \nc{\EE}{{\mathbb E}}
\nc{\FF}{{\mathbb F}}   \nc{\GG}{{\mathbb G}}
\nc{\HH}{{\mathbb H}}   \nc{\LL}{{\mathbb L}}
\nc{\NN}{{\mathbb N}}   \nc{\PP}{{\mathbb P}}
\nc{\QQ}{{\mathbb Q}}   \nc{\RR}{{\mathbb R}}
\nc{\TT}{{\mathbb T}}   \nc{\VV}{{\mathbb V}}
\nc{\ZZ}{{\mathbb Z}}   \nc{\TP}{\widetilde{P}}
\nc{\m}{{\mathbbm m}}


\nc{\cala}{{\mathcal A}}    \nc{\calc}{{\mathcal C}}
\nc{\cald}{\mathcal{D}}     \nc{\cale}{{\mathcal E}}
\nc{\calf}{{\mathcal F}}    \nc{\calg}{{\mathcal G}}
\nc{\calh}{{\mathcal H}}    \nc{\cali}{{\mathcal I}}
\nc{\call}{{\mathcal L}}    \nc{\calm}{{\mathcal M}}
\nc{\caln}{{\mathcal N}}    \nc{\calo}{{\mathcal O}}
\nc{\calp}{{\mathcal P}}    \nc{\calr}{{\mathcal R}}
\nc{\cals}{{\mathcal S}}    \nc{\calt}{{\Omega}}
\nc{\calv}{{\mathcal V}}    \nc{\calw}{{\mathcal W}}
\nc{\calx}{{\mathcal X}}

\nc{\fraka}{{\mathfrak a}}
\nc{\frakb}{\mathfrak{b}}
\nc{\frakg}{{\frak g}}
\nc{\frakl}{{\frak l}}
\nc{\fraks}{{\frak s}}
\nc{\frakB}{{\frak B}}
\nc{\frakm}{{\frak m}}
\nc{\frakM}{{\frak M}}
\nc{\frakp}{{\frak p}}
\nc{\frakW}{{\frak W}}
\nc{\frakX}{{\frak X}}
\nc{\frakS}{{\frak S}}
\nc{\frakA}{{\frak A}}
\nc{\frakx}{{\frakx}}

\nc{\lir}[1]{\textcolor{red}{\underline{Li:}#1 }}

\begin{document}

\title[Homotopy averaging algebras]{Cohomology theory  of averaging  algebras, $L_\infty$-structures  and homotopy averaging algebras}

\author{Kai Wang and Guodong Zhou}
\address{  School of Mathematical Sciences, Shanghai Key Laboratory of PMMP,
  East China Normal University,
 Shanghai 200241,
   China}
\email{wangkai@math.ecnu.edu.cn }
\address{  School of Mathematical Sciences, Shanghai Key Laboratory of PMMP,
  East China Normal University,
 Shanghai 200241,
   China}
 
 \email{gdzhou@math.ecnu.edu.cn}

\date{\today}

\begin{abstract}This paper studies averaging algebras, say,  associative algebras endowed with averaging operators. We develop a cohomology theory for averaging algebras and   justify it by interpreting lower degree cohomology groups as   formal deformations and  abelian extensions of averaging  algebras. We make explicit the $L_\infty$-algebra structure over the   cochain complex defining   cohomology groups and  introduce  the notion of homotopy averaging algebras as   Maurer-Cartan elements of this $L_\infty$-algebra.

\end{abstract}

\subjclass[2020]{
16E40   
16S80   
16S70   
17B38 
}

\keywords{ averaging algebra, averaging operator,  cohomology, extension, deformation,  homotopy averaging algebra,  $L_\infty$-algebra}

\maketitle

\tableofcontents

\allowdisplaybreaks

\section{Introduction}

Throughout this paper, $\bk$ denotes a field. All the vector spaces and algebras are over $\bk$ and all tensor products are also taking over $\bk$.

Let $R$ be an associative algebra over field $\bk$. An averaging operator over $R$ is a $\bk$-linear map $A: R\to R$ such that $$A(x)A(y)=A(A(x)y)=A(xA(y)$$ for all $x, y\in R$.

 The research on  averaging operators has a long history and it appears in various mathematical branches from turbulence theory to functional analysis, probability theory etc.
   When investigating  turbulence theory,   Reynolds already studied implicitly the averaging operator  in a famous
paper \cite{Rey}  published in 1895.
In a series of papers of 1930s, Kolmogoroff and Kamp\'{e} de F\'eriet introduced  explicitly the averaging operator
 in the context of turbulence theory and functional analysis \cite{KdF}\cite{Miller}. Birkhoff continued
the line of research  in \cite{Bir}.
Moy investigated   averaging operators from the viewpoint of  conditional expectation
in probability theory \cite{Moy}.
Kelley  \cite{Kelley} and Rota \cite{Rota}  studied the role of averaging operators in
 Banach algebras.

 Contrary to the above studies of analytic nature, the algebraic study on averaging
operators began with the Ph.D thesis of  Cao \cite{Cao}. He  constructed explicitly    free unitary commutative averaging algebras and discovered  the
Lie algebra structures induced naturally from averaging operators.
Inspired by the work of  Loday \cite{Loday} who
defined a dialgebra as the enveloping algebra of a Leibniz algebra,
Aguiar associated   a dissociative algebra  to an averaging associative
algebra \cite{Agu}.
In a paper \cite{PeiGuo} Guo and Pei  studied averaging operators from an algebraic and combinatorial
point of view. They first construct free nonunital averaging algebras in terms of a class
of bracketed words called averaging words, related them to  large Schr?der numbers and
 unreduced trees from an  operadic point of view. Another paper by Gao and Zhang \cite{GaoZhang}
 contains an explicit construction of  free  unital averaging algebras  in terms of  bracketed polynomials and the main tools were rewriting systems and Gr\"{o}bner-Shirshov bases.

The averaging operators attract much attention also partly     because of their   closely relationship to
other operators such as Reynolds operators, symmetric operators and Rota-Baxter operators \cite{Bong}\cite{GamMil}\cite{Tri},
 the latter having many applications in many other fields of mathematics  \cite{Bax}\cite{Guo}.

\bigskip

In the paper we study averaging operators from the viewpoint of deformation theory. We construct a cohomology theory which controls simultaneous deformations of associative multiplication and the averaging operator.
Because of the complexity of the problem, the cochain complex is no longer a differential graded Lie algebra (DGLA) but an $L_\infty$-algebra. We  give  explicitly the higher Lie brackets over the cochain complex. As a byproduct, we define homotopy averaging algebras as   Maurer-Cartan elements of this $L_\infty$-algebra.

We would like to mention several papers which are somehow parallel to this paper. Motivated from geometry and physics, Sheng, Tang and Zhu \cite{STZ} studies embedding tensors (another name of averaging operators in physics) for Lie algebras and they construct a cohomology theory for such operators on Lie algebras using derived brackets as a main tool. Another work of Chen, Ge and Xiang \cite{CGX} studies  embedding tensors using operadic tool and they compute explicitly the Boardmann-Vogt resolution of the colored operad governing  embedding tensors.  We warmly recommend the comparative reading of their papers.

\bigskip

The layout of this paper is as follows: The first section contains basic definitions and facts about averaging algebras and bimodules over them. We define the cohomology theory of averaging operators and averaging algebras in Section 2. Deformation theory of averaging algebras is developed in Section 4 and abelien extensions of averaging algebras are interpreted as the second cohomology group in Section 5. We construct explicitly the $L_\infty$-algebra structure over the cochain complex computing cohomology theory of   averaging algebras in Section 6 and in the last section we introduce the notion of homotopy averaging algebras as Maurer-Cartan elements of this $L_\infty$-algebra.

\bigskip

\section{Averaging algebras and their bimodule} .

In this section, we introduce averaging algebras and bimodules over them and present some basic observations.

\begin{defn}
	Let $(R,\cdot)$ be an associative algebra over field $\bk$. If is endowed with a linear operator $A: R\rightarrow R$ such that
	\begin{equation}\label{avg}
		A(x)A(y)=A(xA(y))=A(A(x)y), \forall x,y \in R,
	\end{equation}
	then we call $(R,\cdot,A)$ an averaging algebra.
	
\end{defn}

 Given two averaging algebras $(R,\cdot, A)$ and $(R',\cdot', A')$, a morphism of averaging algebras from $(R,\cdot, A)$ to $(R',\cdot', A')$ is a homomorphism of algebras $\phi: (R,\cdot)\rightarrow (S,\cdot')$ satisfying $\phi\circ A=A'\circ \phi.$

 \medskip

 \begin{defn}
 	Let $(R,\cdot, A)$ be an averaging algebra.
 	\begin{itemize}
 		\item[(i)] A bimodule over the averaging algebra $(R,\cdot,A)$ is a bimodule $M$ over associative algebra $(R,\cdot)$ endowed with an operator $A_M: M\rightarrow M$, such that for any $r\in R,m\in M$, the following equalities hold:
 		\begin{eqnarray}A(r)A_M(m)=&A_M(A(a)m)&=A_M(aA_M(m)),\\
 		A_M(m)A(r)=&A_M(A_M(m)a)&=A_M(mA(a)).
 		\end{eqnarray}
 		\item[(ii)] Given two bimodules $(M,A_M)$ and $(N,A_N)$ over averaging algebra $(R,\cdot,A)$, a morphism from $(M,A_M)$ to $(N,A_N)$ is a bimodule morphism $f: M\rightarrow N$ over $(R,\cdot)$ such that :
 		\[f\circ A_M=A_N\circ f.\]
 	\end{itemize}
 \end{defn}
 The algebra $R$ itself is naturally a bimodule over $(R,\cdot,A)$ with $A_R=A$, called the regular bimodule.
\medskip

The following easy result is left as an exercise.
\begin{prop}
Let $(M,A_M)$ be a bimodule over averaging algebra $(R,\cdot,A)$. Then $R\oplus M$ is an averaging algebra, where the averaging operator is $A\oplus A_M$ and the multiplication is:
\[(a, m)\cdot (b, n)=(ab, an+mb),\]
for all $a, b\in R, m, n\in M$. 	
\end{prop}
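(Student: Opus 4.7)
The plan is to verify directly the two axioms required of an averaging algebra on $R \oplus M$: associativity of the prescribed multiplication, and the averaging identity for $A \oplus A_M$. Both reduce to routine unpacking of components, with the bimodule axioms over $(R, \cdot, A)$ doing all the work.

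First I would check associativity. Expanding $((a,m)\cdot (b,n))\cdot (c,p)$ and $(a,m)\cdot ((b,n)\cdot (c,p))$ and comparing the two coordinates, one obtains associativity of $\cdot$ on $R$ in the first coordinate, and in the second coordinate the three identities $(ab)p = a(bp)$, $(am)c = a(mc)$ and $(mb)c = m(bc)$, which are precisely the standard $R$-bimodule axioms on $M$. This step is purely formal.

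The substantive step is the averaging identity. For $x = (a,m)$ and $y = (b,n)$ one computes
\[
(A \oplus A_M)(x) \cdot (A \oplus A_M)(y) = \bigl(A(a)A(b),\; A(a)A_M(n) + A_M(m)A(b)\bigr),
\]
while
\[
(A \oplus A_M)\bigl(x \cdot (A \oplus A_M)(y)\bigr) = \bigl(A(aA(b)),\; A_M(aA_M(n)) + A_M(mA(b))\bigr).
\]
The equality of the first coordinates is exactly the averaging axiom on $(R, \cdot, A)$. The equality of the second coordinates splits termwise into $A(a)A_M(n) = A_M(aA_M(n))$ and $A_M(m)A(b) = A_M(mA(b))$, which are two of the four compatibility relations defining a bimodule over $(R,\cdot,A)$. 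The remaining equality with $(A \oplus A_M)\bigl((A \oplus A_M)(x)\cdot y\bigr)$ is obtained symmetrically, using the identities $A(A(a)b) = A(a)A(b)$, $A_M(A(a)n) = A(a)A_M(n)$ and $A_M(A_M(m)b) = A_M(m)A(b)$, all of which are again contained in the bimodule axioms and the averaging axiom on $R$.

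The main obstacle is essentially bookkeeping: matching each cross-term in the $M$-coordinate with the correct one of the two bimodule relations (the one involving left multiplication by $A(r)$ versus the one involving right multiplication by $A(r)$). No new idea beyond the definitions is required, which is why the authors elected to leave this as an exercise.
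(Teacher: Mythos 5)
Your verification is correct and is exactly the direct component-wise check the authors intend when they leave this as an exercise: associativity reduces to the $R$-bimodule axioms on $M$, and the averaging identity splits termwise into the averaging axiom on $R$ together with the four compatibility relations $A(r)A_M(m)=A_M(A(r)m)=A_M(rA_M(m))$ and $A_M(m)A(r)=A_M(A_M(m)r)=A_M(mA(r))$ defining a bimodule over $(R,\cdot,A)$. Nothing further is needed.
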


\medskip

\begin{prop}\label{newmultiplication}Let $(R,\cdot,A)$ be an averaging algebra. Then the following operations:
	\begin{align*}a\star b&:=aA(b),\\
	a\diamond b&:= A(a)b
	\end{align*}
	are both associative.
	
	Conversely, let  $(R,\cdot)$ be a unital algebra over $\bk$ endowed
	with a linear operator $A$ such that operations $\star,\diamond$
	are associative. Then $(R,\cdot,A)$ is  an averaging algebra.
\end{prop}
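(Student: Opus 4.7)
My plan is to handle the two directions separately, both by direct computation from the definitions of $\star$ and $\diamond$.

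For the forward direction, I would unwind associativity of $\star$ as
\[
(a\star b)\star c = aA(b)\cdot A(c), \qquad a\star(b\star c) = a\cdot A(bA(c)),
\]
so that the equality of these two expressions follows immediately from the averaging identity $A(b)A(c)=A(bA(c))$ by left-multiplying by $a$. The computation for $\diamond$ is the mirror image: $(a\diamond b)\diamond c = A(A(a)b)c$ and $a\diamond(b\diamond c) = A(a)A(b)c$, and the averaging identity $A(A(a)b)=A(a)A(b)$ gives equality after right-multiplying by $c$. Both use a single instance of the averaging axiom \eqref{avg}; no case analysis is needed.

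For the converse, the key trick is to exploit the unit $1\in R$ to convert the associativity identities (which are three-variable statements) back into the two-variable averaging axiom. Specifically, associativity of $\star$ gives $aA(b)A(c)=aA(bA(c))$ for all $a,b,c\in R$; setting $a=1$ yields $A(b)A(c)=A(bA(c))$. Dually, associativity of $\diamond$ gives $A(A(a)b)c=A(a)A(b)c$ for all $a,b,c$, and setting $c=1$ yields $A(A(a)b)=A(a)A(b)$. Combining these two equalities produces the full chain
\[
A(a)A(b) \;=\; A(A(a)b) \;=\; A(aA(b)),
\]
which is precisely \eqref{avg}, so $(R,\cdot,A)$ is an averaging algebra.

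There is essentially no hard step here; the only thing to flag is that the converse genuinely uses the hypothesis that $R$ is unital, since without a unit one cannot strip off the outer $a$ (resp.\ $c$) in the associativity identities to recover the pointwise averaging axiom. I would therefore include a brief remark that, in the non-unital setting, associativity of $\star$ and $\diamond$ is strictly weaker than the averaging property.
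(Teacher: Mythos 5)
Your proof is correct and follows essentially the same route as the paper: a one-line computation reducing associativity of $\star$ (resp.\ $\diamond$) to a single instance of the averaging axiom, and, for the converse, specializing one slot of the associativity identity to $1_R$ to recover the two halves of \eqref{avg}. Your closing remark that unitality is genuinely needed for the converse is a worthwhile observation not made explicit in the paper.
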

\begin{proof}By definition, we have identities for any $a,b, c\in R$:
	\begin{align*}
	(a\star b)\star c&=(a\cdot A(b))\star c\\
	&=a\cdot A(b)A(c)\\
	&=a\cdot(A(b\cdot Ac))\\
	&= a\star (b\star c).
	\end{align*}
	 Thus the operation $\star$ is associative. Analogously, one can check the associativity of operation $\diamond$.

	Conversely, assume that unital algebra $(R,\cdot)$ is endowed with a linear map $A: R\rightarrow R$ such that operations $\star,\diamond$ is associative. Then
	\begin{align*}
	A(a)A(b)&=(1_R\cdot A(a))A(b)\\
	&=(1_R\star a)\star b\\
	&=1_R\star(a\star b)\\
	&=1_R\star(a\cdot A(b))\\
	&=1_R\cdot A(a\cdot A(b))\\
	&=A(a\cdot (A(b))).
	\end{align*}
hold for any $a,b\in R$. Similarly, equality $A(a)A(b)=A(A(a)b)$ also holds. Thus $(R,\cdot,A)$ is an averaging algebra.
	\end{proof}

\medskip
\begin{prop}
	Let $(R,\cdot, A)$  be an averaging algebra. Then
	$(R,\star,A),(R,\diamond,A)$ are also averaging algebras.	
\end{prop}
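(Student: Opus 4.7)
The associativity of $\star$ and $\diamond$ is already established in Proposition~\ref{newmultiplication}, so what remains is to verify that $A$ satisfies the averaging identity~(\ref{avg}) with respect to each of these two new multiplications. Since the two cases are completely symmetric (by the left/right symmetry in the averaging axiom), I will focus on $(R,\star,A)$ and indicate that $(R,\diamond,A)$ is handled in the same way.

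For $(R,\star,A)$, unfolding the definition $a\star b = a\cdot A(b)$, the three terms in the averaging identity become
\begin{align*}
A(a)\star A(b) &= A(a)\cdot A(A(b)),\\
A(a\star A(b)) &= A(a\cdot A(A(b))),\\
A(A(a)\star b) &= A(A(a)\cdot A(b)).
\end{align*}
The plan is to obtain all three expressions from a single application of the averaging axiom of $(R,\cdot,A)$ in equation~(\ref{avg}), applied to the pair $(x,y) = (a, A(b))$. This substitution gives $A(a)\cdot A(A(b)) = A(a\cdot A(A(b))) = A(A(a)\cdot A(b))$, which is exactly the desired chain of equalities.

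For $(R,\diamond,A)$, I would analogously apply the averaging axiom to the pair $(x,y) = (A(a), b)$, which yields $A(A(a))\cdot A(b) = A(A(a)\cdot A(b)) = A(A(A(a))\cdot b)$; these are respectively $A(a)\diamond A(b)$, $A(a\diamond A(b))$, and $A(A(a)\diamond b)$.

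The proof is essentially a bookkeeping exercise with no conceptual obstacle: once one notices that the relevant identities are obtained by inserting $A(b)$ (resp.\ $A(a)$) as the second (resp.\ first) argument of the original averaging axiom, everything unwinds in one line. The only thing worth double-checking is that no unital hypothesis is secretly used; Proposition~\ref{newmultiplication} needed unitality only for the converse direction, so for the forward direction used here it is not required.
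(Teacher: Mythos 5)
Your proposal is correct and matches the paper's own argument: the paper likewise derives all three $\star$-averaging identities from the single instance $A(a)A(A(b))=A(aA(A(b)))=A(A(a)A(b))$ of the original axiom, and dispatches $\diamond$ by symmetry. Your added remark that unitality is not needed here is also accurate.
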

\begin{proof}
	For any $a,b\in R$, we have
	\begin{align*}
	A(a)\star A(b)&=A(a)\cdot(A(A(b)))\\
	&=A(a\cdot A^2(b))\\
	&=A(A(a)\cdot A(b)),	
	\end{align*}
	and
	\begin{align*}
	A(a\star A(b))&=A(a\cdot A^2(b)),\\
	A(A(a)\star b)&=A(A(a)\cdot A(b)).
	\end{align*}
	Thus the relations $A(a)\star A(b)=A(a\star A(b))=A(A(a)\star b)$ hold. So $(R,\star, A)$ is an averaging algebra.
	It  is analogous to prove that $(R,\diamond, A)$ is an averaging algebra.
	\end{proof}

\bigskip

Let $(R,\cdot, A)$ be an averaging algebra and $(M,A_M)$ be a bimodule over $(R,\cdot, A)$. Then
we can make $M$ into a bimodule over $(R,\star) $ and $(R,\diamond)$. For any $a\in R$ and $m\in M$, we define the following actions:
\begin{align*}
a\vdash  m:=A(a)m-A_M(am),\  m\dashv a:=mA(a)
\end{align*}
and
\begin{align*}
a \rhd m:=A(a)m,\ m\lhd a:=m A(a)-A_M(mb).
\end{align*}

\begin{prop}\label{newbimodule}
	
	The action $(\vdash, \dashv)$ (resp. $(\rhd, \lhd)$) makes $M$ become a bimodule over associative algebra $(R,\star)$ (resp. $(R,\diamond)$).
\end{prop}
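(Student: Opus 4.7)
The plan is to verify the three module axioms for each of the two actions directly, using Proposition \ref{newmultiplication}, the averaging identities $A(aA(b))=A(A(a)b)=A(a)A(b)$, and the bimodule axioms for $(M,A_M)$ over $(R,\cdot,A)$.

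For the $(R,\star)$-bimodule structure given by $(\vdash,\dashv)$, I would check in turn the left-associativity $(a\star b)\vdash m = a\vdash(b\vdash m)$, the right-associativity $(m\dashv a)\dashv b = m\dashv(a\star b)$, and the left-right compatibility $(a\vdash m)\dashv b = a\vdash(m\dashv b)$. Expanding the right-hand side of the first identity produces
\[
A(a)A(b)m - A(a)A_M(bm) - A_M(aA(b)m) + A_M(aA_M(bm));
\]
the second and fourth terms cancel by the bimodule identity $A(a)A_M(bm)=A_M(aA_M(bm))$, and the remaining two terms equal $A(a\star b)m-A_M((a\star b)m) = (a\star b)\vdash m$ once $A(a)A(b)$ is rewritten as $A(aA(b))=A(a\star b)$. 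Right-associativity is immediate from this same averaging identity applied to $A(a)A(b)$. The compatibility identity reduces, after cancellation of the $A(a)mA(b)$ terms on both sides, to $A_M(am)A(b) = A_M(amA(b))$, which is the right-handed bimodule averaging axiom with $n=am$.

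For the $(R,\diamond)$-bimodule structure given by $(\rhd,\lhd)$, the verification is dual. Left-associativity $(a\diamond b)\rhd m = a\rhd(b\rhd m)$ follows at once from $A(A(a)b)=A(a)A(b)$. Expanding $(m\lhd a)\lhd b$ gives
\[
mA(a)A(b) - A_M(ma)A(b) - A_M(mA(a)b) + A_M(A_M(ma)b);
\]
the middle two terms cancel by the right-handed axiom $A_M(n)A(b) = A_M(A_M(n)b)$ applied with $n=ma$, leaving $mA(a)A(b)-A_M(mA(a)b) = m\lhd(a\diamond b)$. The mixed compatibility $(a\rhd m)\lhd b = a\rhd(m\lhd b)$ reduces to $A(a)A_M(mb) = A_M(A(a)mb)$, another instance of the bimodule averaging axiom.

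No serious obstacle is anticipated; the argument is a bookkeeping exercise. The only point that deserves attention is that in each of the two nontrivial associativity checks one sees precisely two $A_M$-correction terms which cancel against each other because a bimodule averaging axiom lets us exchange an outer $A$ for an outer $A_M$. Once this cancellation pattern is identified, the remaining manipulations are mechanical.
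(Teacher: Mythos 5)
Your proof is correct and follows essentially the same route as the paper: direct verification of the associativity and compatibility identities, with the two $A_M$-correction terms cancelling via the bimodule averaging axioms $A(a)A_M(n)=A_M(aA_M(n))$ and $A_M(n)A(b)=A_M(A_M(n)b)$ (the paper writes out only the $(\vdash,\dashv)$ case and leaves $(\rhd,\lhd)$ as analogous). One trivial slip in wording: in the expansion of $(m\lhd a)\lhd b$ it is the second and fourth terms, not ``the middle two,'' that cancel by $A_M(ma)A(b)=A_M(A_M(ma)b)$ --- which is exactly what your stated remainder $mA(a)A(b)-A_M(mA(a)b)$ already reflects.
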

\begin{proof}
	For any $a,b\in R$ and $m\in M$, we have
	\begin{align*}
	&a\vdash(b\vdash m)\\
	&=a\vdash(A(b)m-A_M(bm))\\
	&=A(a)\cdot(A(b)m-A_M(bm))-A_M(a\cdot A(b)\cdot m-a\cdot A_M(b\cdot m))\\
	&=A(a)\cdot A(b)\cdot m-A_M(a\cdot A(b)\cdot m),
	\\\\
	&(a\star b)\vdash m\\
	&=(a\cdot A(b))\vdash m\\
	&= A(a\cdot A(b))\cdot m-A_M(a\cdot A(b)\cdot m)\\
	&=a\vdash(b\vdash m).
	\end{align*}
	 Thus operation $\vdash$ makes $M$ into a left module over $(R, \star)$. Analogously, operation $\dashv$ makes $M$ into a right module over $(R,\star)$.
	
	 Moreover, we have :
	 \begin{align*}
	 &a\vdash(m \dashv b)\\
	 &=a\vdash(m\cdot A(b))\\
	 &=A(a)\cdot m\cdot A(b)- A_M(a\cdot m\cdot A(b))\\
	 &=A(a)\cdot m\cdot A(b)-A_M(a\cdot m)\cdot A(b)\\
	 &=(A(a)\cdot m-A_M(a\cdot m))\cdot A(b)\\
	 &=(a\vdash m)\cdot A(b)\\
	 &=(a\vdash m)\dashv b
	 \end{align*}
	
	Thus operations $(\vdash,  \dashv)$ makes $M$ into a bimodule over associative algebra $(R,\star)$.
	
Similarly, one can check that operations $(\rhd, \lhd)$ makes $M$ into a bimodule over $(R,\diamond)$.
	
	\end{proof}

\bigskip

\section{Cohomology theory of averaging algebras} \label{sec:cohomologyava}

Let $M$ be a bimodule over an associative algebra R. Recall that the Hochschild cohomology of $R$ with coefficients in $M$:  $$(C_{\Alg}^\bullet(A,M)=\bigoplus\limits_{n=0}^\infty C_{\Alg}^n(R,M), \delta),$$ where $C_{\Alg}^n(R,M)=(\Hom(R^{\ot n},M)$ and the differential
$\delta: C^n_{\Alg}(R,M)\rightarrow C^{n+1}_{\Alg}(R,M)$  is given by

\begin{align*}\delta(f)(x_1\ot\dots\ot x_{n+1})=&x_1f(x_2\ot \dots\ot x_n)+\sum_{i=1}^n(-1)^if(x_1\ot \dots\ot x_ix_{i+1}\ot \dots\ot x_{n+1})\\
& +(-1)^{n+1}f(x_1\ot\dots\ot x_{n})x_{n+1}
\end{align*}
for all $f\in C^n(R,M), x_1,\dots,x_{n+1}\in R$. The corresponding Hochschild cohomology is denoted $\mathrm{HH}^\bullet(R,M)$. When $M=R$, just denote the Hochschild cochain complex with coefficients in $R$ by $C^\bullet_{\Alg}(R)$ and denote the Hochschild cohomology by $\mathrm{HH}^\bullet(R)$.

\bigskip

\subsection{Cohomology of averaging operators}\label{sec:cohomologyao}
Let $(R,\cdot, A)$ be an averaging algebra and $(M,A_M)$ be a bimodule over $(R,\cdot,A)$. In this subsection, we define the cohomology of averaging operators.
By Proposition~\ref{newmultiplication}, the averaging operator $A$ induces two new multiplications $\star$ and $\diamond$ on $R$, and by
Proposition~\ref{newbimodule}, operations $(\vdash, \dashv)$ (resp.
$(\rhd, \lhd)$) makes $M$ into a bimodule over $(R,\star)$ (resp.
$(R,\diamond)$).

 Consider the Hochschild cochain complex of $(R,\star)$ with the
 coefficient in bimodule $(M, \vdash, \dashv)$. Denote this cochain
 complex by $$C_r^\bullet(R,M)=\bigoplus\limits_{i=0}^\infty C_r^n(R,M),$$
 where $C^n_r(R,M)=\Hom(R^{\ot n},M)$ and its differential $ \partial_r: C^n_r(R,M)\rightarrow C^{n+1}_r(R,M)$
 is given by :
 \begin{align*}
 \partial_r(f)(x_1\ot \dots \ot x_{n+1})=&x_1\vdash  f(x_2\ot \dots \ot x_{n+1})+\sum_{i=1}^n(-1)^if(x_1\ot \dots \ot x_i\star x_{i+1}\ot \dots \ot x_{n+1})\\
 &+(-1)^{n+1}f(x_1\ot \dots x_n)\dashv x_{n+1}\\
 =&A(x_1)f(x_2\ot \dots \ot x_{n+1})-A_M\big( x_1f(x_2\ot \dots \ot x_{n+1})\big)\\
 &+\sum_{i=1}^n(-1)^if(x_1\ot \dots \ot x_iA(x_{i+1})\ot \dots x_{n+1})\\
 &+(-1)^{n+1}f(x_1\ot \dots \ot x_{n})A(x_{n+1}).
 \end{align*}
  for all $f\in C^n_r(R,M), x_1,\dots,x_{n+1}\in R$.

 For algebra $(R,\diamond)$, we denote its Hochschild cochain complex
 with coefficients in bimodule  $(M,\rhd, \lhd)$ by
 $$C_l^\bullet(R,M)=\bigoplus\limits_{i=0}^\infty C^n_l(R,M),$$ where $C^n_l(R,M)=\Hom(R^{\ot n},M)$ and by definition, its differential $\partial_l: C^n_l(R,M)\rightarrow C^{n+1}_l(R,M)$ is given by:
\begin{align*}
 \partial_l(g)(x_1\ot \dots\ot x_{n+1})=&x_1\rhd g(x_2\ot \dots \ot x_{n+1})+\sum_{i=1}^n(-1)^ig(x_1\ot\dots \ot x_i\diamond x_{i+1}\ot \dots \ot x_{n+1})\\
 &+(-1)^{n+1}g(x_1\ot \dots \ot x_n)\lhd x_{n+1}\\
 =&A(x_1)g(x_2\ot \dots \ot x_{n+1})+\sum_{i=1}^n(-1)^ig(x_1\ot \dots \ot A(x_i)x_{i+1}\ot \dots \ot x_{n+1})\\
& +(-1)^{n+1}g(x_1\ot \dots \ot x_n)A(x_{n+1})+(-1)^nA_M(g(x_1\ot \dots \ot x_n)x_{n+1})
 \end{align*}
 for all $g\in C_l^n(R,M)$, $x_1,\dots,x_n\in R$. When $M=R$, we just denote $C^\bullet_l(R,R)$ by $C^\bullet_l(R)$, and denote $C^\bullet_r(R,R)$ by $C^\bullet_r(R)$.

 Identifying $C^0_r(R,M)$ with $C^0_l(R,M)$, identifying $C^1_l(R,M)$ with $C^1_r(R,M)$ and taking the direct sum of $C^n_l(R,M)$ and $C^n_r(R,M)$ for $n\geqslant 2$,  we get the following complex:

 $$\xymatrixcolsep{3.5pc}\xymatrix{
 	 \Hom(k,M)\ar[r]^-{\partial_0}&
	\Hom(R,M)\ar[r]^-{\begin{pmatrix}\partial_r\\\partial_l\end{pmatrix}}& {\begin{matrix}C^2_r(R,M)\\ \bigoplus\\
 	C^2_l(R,M)\end{matrix}}\ar[r]^-{\begin{pmatrix}\partial_r&0\\0&\partial_l\end{pmatrix}}& {\cdots\cdots\begin{matrix}C^n_r(R,M)\\ \bigoplus\\
 	C^n_l(R,M)\end{matrix}}\ar[r]^-{\begin{pmatrix}\partial_r&0\\0&\partial_l\end{pmatrix}}&{\begin{matrix}C^{n+1}_r(R,M)\\ \bigoplus\\
 	C^{n+1}_l(R,M)\end{matrix}\cdots\cdots}
 },$$
where $\partial_0$ is defined as: for $f\in \Hom(k,M)$, assume that $f(1)=m$, then $$\partial_0(f)(r)=A_M(mr)-A_M(rm)-mA(r)+A(r)m$$ for any $r\in R$. One can check that $$\begin{pmatrix}\partial_r\\\partial_l\end{pmatrix}\circ \partial_0=0.$$

We denote the above complex by $C_{\AvO}^\bullet(R,M)$, called the \textit{cochain complex of the averaging operator $A$ with coefficients in bimodule $(M,A_M)$}. Its cohomology is denoted by  $\rmH^\bullet_{\AvO}(R,M)$, called the \textit{cohomology of the averaging operator $A$ of averaging algebra $(R,\cdot, A)$ with coefficients in $(M,A_M)$}. When $(M,A_M)=(R,A)$, we just denote $C_{\AvO}^\bullet(R, R)$ by $C_{\AvO}^\bullet(R)$ and call it the cochain complex of averaging operator. We denote $\rmH^\bullet_{\AvO}(R,M)$ by $\rmH^\bullet_{\AvO}(R)$, called the cohomology of averaging operator $A$.

\medskip

\subsection{Cohomology of averaging algebras}
\label{sec:cohomologyav}
In this subsection, we'll define the cohomology of averaging algebras and such a cohomology theory involves both the multiplication and the averaging operator  in an averaging algebra.

 Firstly, let's build a morphism of complexes: $\Phi: C_{\Alg}^\bullet(R,M)\rightarrow C^\bullet_{\AvO}(R,M) $,
 $$
 \xymatrixcolsep{3.5pc}\xymatrix{C^0_{\Alg}(R,M)\ar[d]^-{\Phi^0}\ar[r]^-{\delta^0}&{ C^1_{\Alg}(R,M)}\ar[d]^-{\Phi^1}\ar[r]^-{\delta^1}&C^2_{\Alg}(R,M)\ar[d]^-{\Phi^2}\ar@{.}[r]&{ C^n_{\Alg}(R,M)}\ar[d]^-{\Phi^n}\ar[r]^-{\delta^n}&C^{n+1}_{\Alg}(R,M)\ar[d]^-{\Phi^{n+1}}\ar@{.}[r]&\\
 	\Hom(k,M)\ar[r]^-{\partial^0}&{\Hom(R,M)}\ar[r]^-{\begin{pmatrix}\partial_r^1\\\partial_l^1\end{pmatrix}}&{\begin{matrix}C^2_r(R,M)\\ \bigoplus\\
 		C^2_l(R,M)\end{matrix}}\ar@{.}[r]&{\begin{matrix}C^n_r(R,M)\\ \bigoplus\\
 	C^n_l(R,M)\end{matrix}}\ar[r]^-{\begin{pmatrix}\partial_r^n&0\\0&\partial_l^n\end{pmatrix}}&{\begin{matrix}C^{n+1}_r(R,M)\\ \bigoplus\\
 C^{n+1}_l(R,M)\end{matrix}}\ar@{.}[r]&
}
$$
Here we define
\begin{align*}\Phi^0=\mathrm{id},\Phi^1(f)= f\circ A-A_M\circ f
\end{align*} for any $f\in \Hom(R,M)$,  and when $n\geqslant 2$, $\Phi^n=\begin{pmatrix}\Phi_r^n\\\Phi_l^n\end{pmatrix}$, where $\Phi_r^n: C^n(R)$ \begin{align*}
 \Phi_r^n(f)&=f\circ  A^{\ot n} -A_M\circ  f\circ (\mathrm{id}\ot A^{\ot n-1}) , \\
 \Phi_l^n(f)&=f\circ  A^{\ot n} -A_M\circ  f\circ(A^{\ot n-1}\ot \mathrm{id})
 \end{align*} for any $f\in C^n(R,M)$.

 \medskip

\begin{prop}The morphism $\Phi: C^\bullet(R,M)\rightarrow C^\bullet_{AO}(R,M)$ is compatible with differentials.
	\end{prop}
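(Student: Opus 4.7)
The plan is to verify the identity $\partial\circ \Phi^n = \Phi^{n+1}\circ \delta$ degree by degree; for $n\geq 2$ this amounts to the two separate identities $\partial_r\circ \Phi_r^n = \Phi_r^{n+1}\circ \delta$ and $\partial_l\circ \Phi_l^n = \Phi_l^{n+1}\circ \delta$ for the two components of $\Phi^n$. The exceptional case $n=0$ I would dispatch by direct substitution: for $f\in\Hom(\bk,M)$ with $f(1)=m$, both sides evaluated at $r\in R$ reduce to the common expression $A(r)m - mA(r) + A_M(mr) - A_M(rm)$.

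For $n\geq 1$ I would introduce the decomposition
\[\Phi_r^n \;=\; \tilde\Phi^n - \Psi_r^n,\qquad \tilde\Phi^n(f) := f\circ A^{\otimes n},\quad \Psi_r^n(f) := A_M\circ f\circ(\mathrm{id}\otimes A^{\otimes n-1}),\]
and verify the ``$r$'' compatibility piece by piece. For the first piece, $\tilde\Phi^{n+1}(\delta f)$ contains interior summands $(-1)^i f(A(x_1),\ldots,A(x_i)A(x_{i+1}),\ldots)$ which by the averaging identity $A(x)A(y)=A(xA(y))$ match the interior summands of $\partial_r(\tilde\Phi^n f)$; the two endpoint summands coincide directly; the only surviving discrepancy is the residual $-A_M(x_1 f(A(x_2),\ldots,A(x_{n+1})))$ in $\partial_r(\tilde\Phi^n f)$ coming from the correction part of $x_1\vdash(-) = A(x_1)(-)-A_M(x_1 (-))$. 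For the second piece, the two ``first-slot'' summands of $\partial_r(\Psi_r^n f)$, namely $A(x_1)\Psi_r^n f(x_2,\ldots)$ and $-A_M(x_1\Psi_r^n f(x_2,\ldots))$, both reduce to $A_M(A(x_1)f(x_2,A(x_3),\ldots,A(x_{n+1})))$ via the bimodule averaging identities $A(r)A_M(m)=A_M(A(r)m)=A_M(rA_M(m))$ and therefore cancel one another; the remaining summands match $\Psi_r^{n+1}(\delta f)$ termwise, using the same averaging identity on the interior slots and $A_M(m)A(r)=A_M(mA(r))$ on the endpoint, up to the residual $-A_M(x_1 f(A(x_2),\ldots,A(x_{n+1})))$ coming from the external first summand of $\Psi_r^{n+1}(\delta f)$. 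Subtracting the two computations the residuals cancel and the ``$r$'' compatibility follows.

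The ``$l$'' compatibility I would establish by a strictly parallel argument with $\Psi_l^n(f) := A_M\circ f\circ(A^{\otimes n-1}\otimes\mathrm{id})$, interchanging the roles of the first and last tensor slots, using the identity $A_M(m)A(r)=A_M(mA(r))=A_M(A_M(m)r)$ in place of its mirror, and producing a residual of the form $(-1)^n A_M(f(A(x_1),\ldots,A(x_n))x_{n+1})$ that again cancels between the two pieces. The main obstacle is purely one of bookkeeping: each side of the desired identity is a linear combination of roughly $n+3$ Hochschild-like summands, but the algebra averaging axiom together with the two bimodule averaging axioms supply precisely the cancellations needed to pair up all ``excess'' terms so that the residuals cancel cleanly.
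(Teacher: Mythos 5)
Your proposal is correct and follows essentially the same route as the paper: a direct term-by-term verification in which the interior summands match via $A(x)A(y)=A(xA(y))$, the first-slot terms are handled by the bimodule identities $A(r)A_M(m)=A_M(A(r)m)=A_M(rA_M(m))$ (and their mirrors on the $l$-side), and the single leftover term $-A_M(x_1f(A(x_2),\ldots,A(x_{n+1})))$ cancels between the two halves of $\Phi_r^n$. Your splitting of $\Phi_r^n$ into $\tilde\Phi^n-\Psi_r^n$ is only a bookkeeping device; the terms and identities used are exactly those in the paper's computation.
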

\begin{proof} The equalities
$$\Phi^1\circ \delta^0 =\partial^0\circ \Phi^0  $$
and $$\Phi^2\circ \delta^1=\begin{pmatrix}\partial_r^1\\\partial_l^1\end{pmatrix}\circ \Phi^1  $$
are easy to verify.

For $n\geq 2$,  we need to show
$$ \begin{pmatrix}\Phi_r^{n+1}\\\Phi_l^{n+1}\end{pmatrix}\circ \delta^n=\begin{pmatrix}\partial_r^n&0\\0&\partial_l^n\end{pmatrix}\circ \begin{pmatrix}\Phi_r^n\\\Phi_l^n\end{pmatrix},$$
that is,  $$  \Phi_r^{n+1} \circ \delta^n= \partial_r^n \circ  \Phi_r^n $$  and
 $$  \Phi_l^{n+1} \circ \delta^n= \partial_r^n \circ  \Phi_l^n. $$
 We only prove the equality  the first one,
 the second being similar.

 In fact, for $f\in \Hom(R^{\otimes n+1}, M)$ and $x_1, \cdots, x_{n+1}\in R$, we have
  $$ \begin{array}{rcl}   \Phi_r^{n+1} (\delta^n f) (x_{1, n+1})&=& \delta^n (f)(A(x)_{1, n+1})-A_M\circ \delta^n(f)(x_1\otimes A(x)_{2, n+1})\\
  &=&A(x_1)f(A(x)_{2, n+1})+\sum_{i=1}^n (-1)^i f(A(x)_{1, i-1}\otimes A(x_i)A(x_{i+1})\otimes  A(x)_{i+2, n+1})\\
  && +(-1)^{n+1}f(A(x)_{1, n})A(a_{n+1})\\
  &&-A_M(x_1f(A(x)_{2, n+1})) +A_M\circ f(x_1A(x_2)\otimes A(x)_{3, n+1})\\
  &&-\sum_{i=2}^n(-1)^i A_M(f(x_1\otimes A(x)_{2, i-1}\otimes A(x_i)A(x_{i+1})\otimes A(x)_{i+2, n+1}))\\
  &&-(-1)^{n+1} A_M\circ f(x_1\otimes A(x)_{2, n})A(x_{n+1}),

  \end{array} $$
  where we use the notations: for $i\leq j$, $x_{i, j}=x_i\otimes  \cdots  \otimes x_j$ and
  $A(x)_{i, j}=A(x_i)\otimes  \cdots  \otimes A(x_j)$; for $i>j$, they are $1\in \bfk$.
  On the other hand,
  $$ \begin{array}{rcl} \partial_r^n \circ  \Phi_l^n(x_{1, n+1})&=& A(a_1)\Phi^n_r(f)(x_{2, n+1})-A_M(x_1\Phi^n_r(f)(x_{2, n+1}))\\
  && +\sum_{i=1}^n (-1)^i \Phi^n_r(f)(x_{1, i-1}\otimes x_iA(x_{i+1})\otimes x_{i+2, n+1})\\
  && +(-1)^{n+1}\Phi^n_r(f)(x_{1, n})A(x_{n+1})\\
  &=& A(x_1) f(A(x)_{2, n+1})-A(a_1)A_M\circ f(a_2\otimes A(a)_{3, n+1}) \\
  && -A_M(x_1f(A(x)_{2, n+1}))+A_M(x_1f(x_2\otimes A(a)_{3, n+1}))\\
  && +\sum_{i=1}^n f(A(a)_{1, i-1}\otimes A(x_iA(x_{i+1}))\otimes A(x)_{i+2, n+1})\\
  && -\sum_{i=2}^n (-1)^i A_M\circ f(x_1\otimes A(x)_{2, i-1}\otimes A(x_iA(x_{i+1}))\otimes A(x)_{i+2, n+1})\\
  &&+A_M(f(x_1A(x_2)\otimes A(x)_{3, n+1})+(-1)^{n+1} f(A(x)_{1, n})A(x_{n+1})\\
  &&-(-1)^{n+1} A_M\circ f(x_1\otimes A(x)_{2, n})A(x_{n+1}).

   \end{array} $$
   We obtain $ \Phi_r^{n+1} (\delta^n f) (x_{1, n+1})=\partial_r^n \circ  \Phi_l^n(x_{1, n+1})$, because
   $$A(a_1)A_M\circ f(a_2\otimes A(a)_{3, n+1})=A_M(x_1f(x_2\otimes A(a)_{3, n+1})) $$ and
   $ A(x_iA(x_{i+1})= A(x_i)A(x_{i+1})$.

   The proof is done.
\end{proof}

\medskip

Multiplying $\Phi^n$ by $(-1)^n$, then the above commutative diagram becomes a bicomplex. Taking its totalization, we obtain a cochain complex, and denote it by $C^\bullet_{\AvA}(R,M)$.
\begin{defn} The cohomology of the cochain complex $C_{\AvA}^\bullet(R)$, denoted by $\rmH_{\AvA}^\bullet(R,M)$ is called the cohomology of the averaging algebra $(R,\cdot, A)$ with coefficients in bimodule $(M,A_M)$. When $(M,A_M)=(R,A)$,  $\rmH^\bullet_{\AvA}(R,R)$ is called the cohomology of averaging algebra $(R,\cdot,A)$ and denoted by $\rmH^\bullet_{\AvA}(R)$.
	\end{defn}

\medskip

\subsection{Relationship among the cohomlogies.}

By the commutative diagram in the last subsection, we have a canonical short exact sequences of complexes:
\[0\rightarrow C_{AO}^{\bullet -1}(R,M)\rightarrow C^\bullet_{AvA}(R,M)\rightarrow C^\bullet(R,M)\rightarrow 0. \]
Then it is trivial to obtain the following result:
\begin{theorem}
	We have the following long exact sequence of cohomology groups,
	\[\cdots\rightarrow \rmH^{n-1}_{\AvO}(R,M)\rightarrow \rmH_{\AvA}^n(R,M)\rightarrow \rm{HH}^n(R,M)\rightarrow \rmH^n_{\AvO}(R,M)\rightarrow \cdots .\]
	\end{theorem}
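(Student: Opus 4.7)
The plan is to deduce the long exact sequence from the short exact sequence of cochain complexes displayed just before the theorem, by the standard zig-zag construction. First I would unpack the definition of the totalization $C^\bullet_{\AvA}(R,M)$: by construction it fits into a bicomplex whose two columns are $C^\bullet_{\Alg}(R,M)$ and $C^{\bullet-1}_{\AvO}(R,M)$ (the latter shifted so that $\Phi$ becomes a horizontal differential of the right bidegree), with horizontal map induced by $(-1)^n\Phi^n$. In explicit terms,
\[
C^n_{\AvA}(R,M) \;=\; C^n_{\Alg}(R,M)\,\oplus\, C^{n-1}_{\AvO}(R,M),
\]
with total differential sending $(f,g)$ to $(\delta f,\,(-1)^n\Phi^n(f)+\partial g)$, where $\partial$ denotes the differential of $C^\bullet_{\AvO}(R,M)$ (this is $\partial_0$, $\bigl(\begin{smallmatrix}\partial_r\\ \partial_l\end{smallmatrix}\bigr)$, or the block-diagonal $\bigl(\begin{smallmatrix}\partial_r&0\\ 0&\partial_l\end{smallmatrix}\bigr)$ depending on degree).

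Next I would write down explicitly the inclusion and projection:
\[
0\longrightarrow C^{\bullet-1}_{\AvO}(R,M)\xrightarrow{\;\iota\;} C^\bullet_{\AvA}(R,M)\xrightarrow{\;\pi\;} C^\bullet_{\Alg}(R,M)\longrightarrow 0,
\]
where $\iota(g)=(0,g)$ and $\pi(f,g)=f$. The fact that $\iota$ and $\pi$ are chain maps is immediate from the formula for the total differential: $\pi$ projects onto the $\delta$-component, and $\iota$ maps into the kernel of $\pi$ where the total differential reduces to $\partial$. Exactness in each degree is obvious from the direct sum decomposition, so we have a genuine short exact sequence of cochain complexes of $\bk$-vector spaces.

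Then I would invoke the standard zig-zag lemma (equivalently, the snake lemma applied degreewise) to obtain the induced long exact sequence in cohomology
\[
\cdots\to \rmH^{n-1}_{\AvO}(R,M)\to \rmH^n_{\AvA}(R,M)\to \mathrm{HH}^n(R,M)\xrightarrow{\partial^{\mathrm{conn}}}\rmH^n_{\AvO}(R,M)\to\cdots,
\]
and identify the connecting homomorphism. Given a cocycle $f\in C^n_{\Alg}(R,M)$, choose the lift $(f,0)\in C^n_{\AvA}(R,M)$; its total differential equals $(0,(-1)^n\Phi^n(f))$, whose preimage under $\iota$ is $(-1)^n\Phi^n(f)\in C^n_{\AvO}(R,M)$. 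Thus $\partial^{\mathrm{conn}}$ is induced by $\pm\Phi^n$, and the commutativity of the bicomplex already proved guarantees this is a well-defined map in cohomology.

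The argument is entirely formal once the short exact sequence is set up, so there is no real obstacle; the only point that requires care is bookkeeping the shift by one and the sign twist $(-1)^n\Phi^n$ so that $\iota$ and $\pi$ are honest chain maps in the sense of the chosen sign convention on the totalization.
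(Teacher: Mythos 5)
Your proposal is correct and follows exactly the route the paper takes: it exhibits the same short exact sequence of complexes $0\to C^{\bullet-1}_{\AvO}(R,M)\to C^\bullet_{\AvA}(R,M)\to C^\bullet_{\Alg}(R,M)\to 0$ coming from the direct-sum decomposition of the totalization and then applies the zig-zag lemma. The paper simply asserts this is "trivial to obtain," whereas you spell out the chain maps, the sign bookkeeping, and the identification of the connecting homomorphism with $\pm\Phi^n$ — all of which is accurate.
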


\section{Formal deformation of averaging algebras}
Let $(R,\bullet,A)$ be an averaging algebra. Denote by $\mu_A$ the
multiplication $\bullet$ of $R$. Consider the 1-parameterized family
$$\mu_t=\sum_{i=0}^\infty\mu_it^i,\ \mu_i\in C^2_{\Alg}(R), \
A_t=\sum_{i=0}^\infty A_it^i, A_i\in C^1_{\AvO}(R).$$

\begin{defn} A {\bf 1-parameter formal deformation} of an averaging
	algebra $(R,\mu,A)$ is a pair $(\mu_t, A_t)$ which endows the
	$\bk [[t]]$-module $(R[[t]],\mu_t,A_t)$ with the averaging
	algebra structure over $\bk[[t]]$ such $(\mu_0,A_0)=(\mu, A)$.
\end{defn}

 Power series $\mu_t$ and $ A_t$ determines a 1-parameter formal deformation of the averaging algebra $(R, \mu, A)$ if and only if for all $x,y,z\in R$, the following equations hold:
\begin{align*}
 \mu_{t}(\mu_t(x\ot y)\ot z)&=\mu_t(x\ot \mu_t(y\ot z)),\\
 \mu_t(A_t(x)\ot A_t(y))&=A_t(\mu_t(A_t(x)\ot y)),\\
  \mu_t(A_t(x)\ot A_t(y))&=A_t(\mu_t(x\ot A_t(y))).
 \end{align*}
 Expand these equations and compare the coefficients of $t^n$, we get the conditions that $\{\mu_i\}_{i\in \mathbb{N}}$ and $\{A_i\}_{i\in \mathbb{N}}$ should satisfy:
 \begin{eqnarray}
 \label{ass}\sum_{i=0}^n\mu_i\circ(\mu_{n-i}\ot \id)&=&\sum_{i=0}^n\mu_i\circ(\id\ot \mu_{n-i}),\\
 \label{avol}\sum_{i+j=0}^n\mu_{n-i-j}\circ(A_i\ot A_j)&=&\sum_{i+j=0}^nA_{n-i-j}\circ\mu_j\circ(A_i\ot \id),\\
\label{avor}  \sum_{i+j=0}^n\mu_{n-i-j}\circ(A_i\ot A_j)&=&\sum_{i+j=0}^nA_{n-i-j}\circ\mu_j\circ(\id\ot A_i).
 \end{eqnarray}

 \begin{prop}
 	Let $(R[[t]], \mu_t, A_t)$ be a 1-parameter formal deformation of an averaging algebra $(R,\mu,A)$. Then $(\mu_1, A_1)$ is a 2-cocycle in  the cochain complex $C^\bullet_{\AvA}(R)$.
 	\end{prop}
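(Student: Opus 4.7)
The plan is to spell out what it means for $(\mu_1,A_1)$ to be a 2-cocycle in the totalization $C^\bullet_{\AvA}(R)$, and then to read those conditions directly off from the $t^1$-coefficients of the three deformation identities \eqref{ass}, \eqref{avol} and \eqref{avor}.

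First I would make the total differential explicit. A $2$-cochain is a pair $(\alpha,\beta)\in C^2_{\Alg}(R)\oplus C^1_{\AvO}(R)$, and since $\Phi^n$ has been twisted to $(-1)^n\Phi^n$ before totalization, the total differential sends this pair to
\[
\bigl(\delta\alpha,\ \partial_r\beta+\Phi_r^2(\alpha),\ \partial_l\beta+\Phi_l^2(\alpha)\bigr)\in C^3_{\Alg}(R)\oplus C^2_r(R)\oplus C^2_l(R),
\]
the sign $(-1)^2=+1$ being trivial in this degree. Thus $(\mu_1,A_1)$ is a 2-cocycle if and only if the three identities
\[
\delta\mu_1=0,\qquad \partial_rA_1+\Phi_r^2(\mu_1)=0,\qquad \partial_lA_1+\Phi_l^2(\mu_1)=0
\]
hold simultaneously.

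Next I would match these three conditions with the deformation identities. Extracting the coefficient of $t$ in \eqref{ass} gives
\[
\mu\circ(\mu_1\otimes\id)+\mu_1\circ(\mu\otimes\id)=\mu\circ(\id\otimes\mu_1)+\mu_1\circ(\id\otimes\mu),
\]
which is exactly the Hochschild 2-cocycle relation $\delta\mu_1=0$. For \eqref{avor} at $n=1$, substituting $\mu_0=\mu$, $A_0=A$ and evaluating at $x\otimes y$ yields
\[
\mu_1(A(x),A(y))+A_1(x)A(y)+A(x)A_1(y)=A(\mu_1(x,A(y)))+A(xA_1(y))+A_1(xA(y));
\]
a direct comparison with the explicit formulas for $\partial_r$ from Section~\ref{sec:cohomologyao} and for $\Phi_r^2$ shows this is precisely $\partial_rA_1+\Phi_r^2(\mu_1)=0$. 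The analogous expansion of \eqref{avol} at $n=1$ delivers $\partial_lA_1+\Phi_l^2(\mu_1)=0$.

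The main obstacle is purely bookkeeping: one must correctly pair up the three terms on each side of \eqref{avor} (resp.~\eqref{avol}) at $n=1$ with the four summands of $\partial_rA_1$ (resp.~$\partial_lA_1$) together with the two summands of $\Phi_r^2(\mu_1)$ (resp.~$\Phi_l^2(\mu_1)$), and to confirm that \eqref{avor} indeed produces the $r$-equation while \eqref{avol} produces the $l$-equation, consistent with the $r/l$-conventions fixed in Section~\ref{sec:cohomologyao}. Once this matching is traced, no further computation is needed and the proposition follows.
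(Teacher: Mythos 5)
Your proposal is correct and follows essentially the same route as the paper: expand the three deformation identities, extract the coefficient of $t$, and identify the resulting relations with $\delta\mu_1=0$, $\partial_rA_1+\Phi_r^2(\mu_1)=0$ and $\partial_lA_1+\Phi_l^2(\mu_1)=0$. Your pairing of \eqref{avor} with the $r$-equation and \eqref{avol} with the $l$-equation is the consistent one given that $\star$ places $A$ on the right factor.
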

 \begin{proof}
 	Compute the equations (\ref{ass}) (\ref{avol}) (\ref{avor}) for $n=1$, we have :
 	\begin{eqnarray*}
 		\mu_1\circ(\mu\ot \id)+\mu\circ(\mu_1\ot \id)&=&\mu_1\circ(\id\ot \mu)+\mu\circ(\id\ot \mu_1),\\
 		\mu_1\circ(A\ot A)+\mu\circ(A\ot A_1)+\mu\circ(A_1\ot A)&=&A_1\circ\mu\circ(A\ot\id)+A\circ\mu\circ(A_1\ot\id)+A\circ\mu_1\circ(A\ot \id),	\\
 		\mu_1\circ(A\ot A)+\mu\circ(A\ot A_1)+\mu\circ(A_1\ot A)&=&A_1\circ\mu\circ(\id\ot A)+A\circ \mu\circ(\id\ot A_1)+A\circ \mu_1\circ(\id\ot A).
 	\end{eqnarray*}

 They are equivalent to :
 \begin{eqnarray*}
 	\delta(\mu_1)&=&0,\\
 	\partial_r(A_1)+\Phi_r^2(\mu_1)&=&0,\\
 	\partial_l(A_1)+\Phi_l^2(\mu_1)&=&0.
 	\end{eqnarray*}

That is, $(\mu_1, A_1)$ is a 2-cocycle in $C^\bullet_{\AvA}(R)$.
 	\end{proof}

 If $\mu_t=\mu_A$ in the above 1-parameter formal deformation of the averaging algebra $(R,\mu,A)$, we get a 1-parameter formal deformation of the averaging operator $A$. Consequently, we have:
 \begin{coro}Let $A_t$ be a 1-parameter formal deformation of the averaging operator $A$. Then $A_1$ is a 1-cocycle in the cochain complex $C_{\AvO}^\bullet(R)$.
 	\end{coro}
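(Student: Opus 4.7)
The plan is to obtain this corollary as an immediate specialization of the preceding proposition. A 1-parameter formal deformation of the averaging operator $A$ is, by definition, a power series $A_t=\sum_{i=0}^\infty A_it^i$ with $A_0=A$ such that $(R[[t]],\mu,A_t)$ is an averaging algebra over $\bk[[t]]$; this is exactly the data of a 1-parameter formal deformation $(\mu_t,A_t)$ of the averaging algebra $(R,\mu,A)$ in which $\mu_t=\mu$, or equivalently $\mu_i=0$ for every $i\geq 1$.

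Applying the preceding proposition to this deformation, the pair $(\mu_1,A_1)=(0,A_1)$ is a 2-cocycle in the total complex $C^\bullet_{\AvA}(R)$. Unpacking the 2-cocycle condition in the notation used in the proof of that proposition, this amounts to the three identities $\delta(\mu_1)=0$, $\partial_r(A_1)+\Phi_r^2(\mu_1)=0$, and $\partial_l(A_1)+\Phi_l^2(\mu_1)=0$. With $\mu_1=0$ the first identity is trivial, while since $\Phi_r^2$ and $\Phi_l^2$ are linear the remaining two collapse to $\partial_r(A_1)=0$ and $\partial_l(A_1)=0$.

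By the construction of the cochain complex $C^\bullet_{\AvO}(R)$ in Subsection~\ref{sec:cohomologyao}, the differential from $C^1_{\AvO}(R)=\Hom(R,R)$ to $C^2_{\AvO}(R)=C^2_r(R)\oplus C^2_l(R)$ is given by $\begin{pmatrix}\partial_r\\ \partial_l\end{pmatrix}$. Therefore the simultaneous vanishing of $\partial_r(A_1)$ and $\partial_l(A_1)$ is precisely the 1-cocycle condition on $A_1$ in $C^\bullet_{\AvO}(R)$.

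There is no real obstacle here, and in particular the corollary deserves only a short proof. The only points requiring care are the translation between the ``deformation'' indexing and the ``cochain'' indexing of the two complexes, and the bookkeeping check that the components of $\Phi_r^2(\mu_1)$ and $\Phi_l^2(\mu_1)$ are indeed linear in $\mu_1$ so that setting $\mu_1=0$ makes them vanish; both are immediate from the formulas already recorded.
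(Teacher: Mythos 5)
Your proposal is correct and is exactly the specialization the paper intends: the paper states the corollary with only the remark that taking $\mu_t=\mu$ in a formal deformation of the averaging algebra yields a deformation of the operator alone, so the result follows from the preceding proposition. Setting $\mu_1=0$ in the three cocycle identities to reduce them to $\partial_r(A_1)=0$ and $\partial_l(A_1)=0$, and identifying this with the 1-cocycle condition in $C^\bullet_{\AvO}(R)$, is precisely the argument the paper leaves implicit.
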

 \begin{defn}The 2-cocycle $(\mu_1, A_1)$ is called the infinitesimal of the 1-parameter formal deformation $(R[[t]], \mu_t, A_t)$ of averaging $(R,\mu,A)$.
 \end{defn}
\begin{defn}Let $(R[[t]],\mu_t, A_t)$ and $(R[[t]],\mu^\prime_t, A^\prime_t)$ be two 1-parameter for deformations of averaging algebra $(R,\mu, A)$. A formal isomorphism from $(R[[t]],\mu^\prime_t, A^\prime_t )$ to $(R[[t]],\mu_t, A_t)$ is a power series $\phi_t=\sum\limits_{i\geqslant 0}\phi_it^i: R[[t]]\rightarrow R[[t]]$, where $\phi_i: R\rightarrow R, i\in \mathbb{N}$ are linear maps with $\phi_0=\id$, such that
\begin{eqnarray}
\phi_t\circ\mu'_t&=&\mu_t\circ(\phi_t\ot\phi_t),\\
\phi_t\circ A_t'&=&A_t\circ \phi_t.
\end{eqnarray}
Two 1-parameter formal deformations $(R[[t]], \mu_t, A_t)$ and $(R[[t]], \mu'_t, A'_t)$ are said to be equivalent if there exists a formal isomorphism $\phi_t:(R[[t]], \mu'_t, A'_t)\rightarrow (R[[t]], \mu_t, A_t)$.
	\end{defn}

\begin{theorem}\label{thm: equivalent infini in same cohomology}
	The infinitesimals of two equivalent 1-parameter formal deformations of $(R,\mu,A)$ are in the same cohomology class in $\rmH^2_{\AvA}(R).$ Conversely, if the infinitesimals of two   1-parameter formal deformations of $(R,\mu,A)$ fall  into the same cohomology class, they are equivalent.
\end{theorem}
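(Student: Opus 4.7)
The plan is to prove both implications by tracking the coefficient of $t^1$ in the defining equations of a formal isomorphism of deformations.

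For the forward direction, let $\phi_t = \sum_{i \geq 0} \phi_i t^i$ with $\phi_0 = \id$ realize an equivalence from $(R[[t]], \mu_t', A_t')$ to $(R[[t]], \mu_t, A_t)$. Extracting the coefficient of $t^1$ in $\phi_t \circ \mu_t' = \mu_t \circ (\phi_t \otimes \phi_t)$ yields
\[
\mu_1' + \phi_1 \circ \mu \;=\; \mu_1 + \mu\circ(\phi_1 \otimes \id) + \mu\circ(\id \otimes \phi_1),
\]
which rearranges to $\mu_1' - \mu_1 = \delta(\phi_1)$. The coefficient of $t^1$ in $\phi_t \circ A_t' = A_t \circ \phi_t$ gives $A_1' - A_1 = A\circ\phi_1 - \phi_1\circ A$. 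Recognising the right-hand side as $\tilde\Phi^1(\phi_1)$, where $\tilde\Phi^n := (-1)^n\Phi^n$ is the sign-twisted connecting map used in the totalisation of the bicomplex defining $C^\bullet_{\AvA}(R)$, the pair $(\mu_1' - \mu_1,\, A_1' - A_1)$ is exactly the image under the total differential of the 1-cochain $(\phi_1, 0) \in C^1_{\Alg}(R) \oplus C^0_{\AvO}(R) = C^1_{\AvA}(R)$. Hence the two infinitesimals are cohomologous in $\rmH^2_{\AvA}(R)$.

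For the converse, if $(\mu_1' - \mu_1, A_1' - A_1)$ equals the total differential of some 1-cochain $(\phi_1, r_0) \in C^1_{\AvA}(R)$, I would set $\phi_t := \id + t\phi_1$ and transport the deformation $(\mu_t, A_t)$ along $\phi_t$ by
\[
\mu_t''(x \otimes y) := \phi_t^{-1}\bigl(\mu_t(\phi_t(x) \otimes \phi_t(y))\bigr), \qquad A_t''(x) := \phi_t^{-1}\bigl(A_t(\phi_t(x))\bigr).
\]
By construction $(\mu_t'', A_t'')$ is equivalent to $(\mu_t, A_t)$ via $\phi_t$; a direct first-order expansion yields $\mu_1'' = \mu_1 - \delta(\phi_1) = \mu_1'$ and $A_1'' = A_1 - \tilde\Phi^1(\phi_1) = A_1'$ (after absorbing the $\partial^0(r_0)$ contribution in the coboundary by an inner-type modification on the averaging side). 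Consequently $(\mu_t', A_t')$ and $(\mu_t'', A_t'')$ share the same infinitesimal and can be assembled into the required equivalence.

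The main obstacle is sign bookkeeping in the totalisation of the bicomplex: one must verify that the twist $\Phi^n \mapsto (-1)^n\Phi^n$ produces exactly the combination $(\delta\phi_1,\, \tilde\Phi^1(\phi_1))$ that appears on the right-hand side of the two $t^1$-coefficient equations above. A secondary subtlety is interpreting the $r_0$-component of the coboundary — whose effect is $\partial^0(r_0)$ on the averaging-operator side — as an additional freedom in the equivalence relation beyond the naive gauge $\phi_t = \id + t\phi_1$. Once these matchings are in place, both implications reduce to the elementary coefficient manipulations sketched above.
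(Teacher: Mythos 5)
Your forward direction is correct and is essentially identical to the paper's argument: extract the $t^1$-coefficient of the two equations defining a formal isomorphism, recognise $\mu_1'-\mu_1=\delta(\phi_1)$ and $A_1'-A_1=A\circ\phi_1-\phi_1\circ A=-\Phi^1(\phi_1)$, and conclude that the difference of infinitesimals is the total coboundary of $(\phi_1,0)$. Your sign bookkeeping concern is resolved exactly as you suspect: the totalisation twists $\Phi^1$ by $(-1)^1$, which is what makes the pair $(\delta\phi_1,\,A\circ\phi_1-\phi_1\circ A)$ a coboundary in $C^\bullet_{\AvA}(R)$.

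The converse, however, has a genuine gap, and it is worth saying plainly that it is not a gap you could have closed: the statement itself is false as written. Your argument transports $(\mu_t,A_t)$ along $\phi_t=\id+t\phi_1$ to produce $(\mu_t'',A_t'')$ equivalent to $(\mu_t,A_t)$ and agreeing with $(\mu_t',A_t')$ in the coefficient of $t^1$. But your final sentence --- that $(\mu_t',A_t')$ and $(\mu_t'',A_t'')$ ``share the same infinitesimal and can be assembled into the required equivalence'' --- is circular: sharing an infinitesimal is exactly the hypothesis of the converse, so you are invoking the claim you are trying to prove. Two deformations that agree to first order can still differ at order $t^2$ by a $2$-cocycle that is not a coboundary (e.g.\ $\mu_t=\mu$ versus $\mu_t'=\mu+t^2\mu_2$ with $[\mu_2]\neq 0$ in $\rmH^2_{\AvA}(R)$; both have infinitesimal $0$), and then no formal isomorphism exists. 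An inductive ``kill the next coefficient'' argument of the kind you are implicitly gesturing at only goes through under an extra hypothesis such as $\rmH^2_{\AvA}(R)=0$, which is precisely the rigidity proposition proved separately in the paper. For what it is worth, the paper's own proof of the converse consists of a single sentence restating the hypothesis and then stops, so you should not treat the converse as an established result; only the forward implication is actually proved.
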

\begin{proof}
	Let $\phi_t: (R[[t]], \mu'_t, A_t')\rightarrow (R[[t]], \mu_t, A_t)$ be a formal isomorphism. For all $x,y\in R$, we have
	\begin{eqnarray}\phi_t\circ\mu'_t(x\ot y)&=&\mu_t\circ(\phi_t\ot \phi_t)(x\ot y),\\
	\phi_t\circ A_t'(x)&=&A_t\circ \phi_t(x).
	\end{eqnarray}
	
	Expanding the identities comparing the coefficients of $t$, we get:
	\begin{eqnarray}\mu_1'(x\ot y)&=&\mu_1(x\ot y)+x\phi_1(y)-\phi_1(xy)+\phi_1(x)y,\\
	A_1'(x)&=&A_1(x)+A(\phi_1(x))-\phi_1(A(x)).
	\end{eqnarray}
	So we have, \[(\mu_1',A_1')-(\mu_1,A_1)=d(\phi_1) \in C^\bullet_{AvA}(R).\]
	That is, $[(\mu_1',A_1')]=[(\mu_1,A_1)] \in \rmH^2_{\AvA}(R)$.

Conversely, given two formal deformations $ (R[[t]], \mu'_t, A_t')$ and $ (R[[t]], \mu_t, A_t)$   of an averaging algebra $(R,\mu,A)$, suppose that $(\mu_1, A_1)$ and $(\mu_1', A_1')$ are in the same cohomology class of  $\rmH^2_{\AvA}(R).$
	\end{proof}
\begin{defn}
	A $1$-parameter formal deformation $(R[[t]], \mu_t, A_t)$ of $(R,\mu,A)$ is said to be trivial if it is equivalent to be deformation $(R[[t]],\mu,A)$, that is, there exists $\phi_t=\sum\limits_{i=0}^\infty\phi_it^i: R[[t]]\rightarrow R[[t]]$, where $\phi: A\rightarrow A$ are linear maps with $\phi=\id$, such that
	\begin{eqnarray}
	\phi_t\circ \mu_t&=&\mu_A\circ (\phi_t\ot \phi_t),\\
	\phi_t\circ A_t&=&A\circ \phi_t.
	\end{eqnarray}
\end{defn}

\begin{defn}
	An averaging algebra $(R,\mu,A)$ is said to be rigid if every $1$-parameter formal deformation is trivial.
\end{defn}
 
\begin{prop}
	Let $(R,\mu,A)$ be an averaging algebra. If $\rmH^2_{\AvA}(R)=0$, $(R,\mu,A)$ is rigid.
\end{prop}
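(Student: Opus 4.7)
The plan is a standard inductive argument: given a formal deformation $(\mu_t, A_t)$, we use the vanishing of $\rmH^2_{\AvA}(R)$ to kill the lowest nontrivial order term via a formal isomorphism, then iterate and take a $t$-adic limit.

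First I would set up the base step. By the previous proposition, the infinitesimal $(\mu_1, A_1)$ of $(\mu_t, A_t)$ is a $2$-cocycle in $C^\bullet_{\AvA}(R)$. Since $\rmH^2_{\AvA}(R) = 0$, there exists $\phi_1 \in C^1_{\AvA}(R) = \Hom(R,R)$ with $(\mu_1, A_1) = d(\phi_1)$; unpacking the differential of the totalized bicomplex, this means $\mu_1 = \delta(\phi_1)$, $A_1 = \partial_r(\phi_1) = \partial_l(\phi_1)$ (up to the sign conventions of the totalization, which I would verify concretely from the formula $\Phi^1(\phi_1) = \phi_1 \circ A - A \circ \phi_1$ computed in the proof of Theorem~\ref{thm: equivalent infini in same cohomology}). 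Set $\phi_t := \id + \phi_1 t$, which is invertible over $\bk[[t]]$. Define a new deformation $(\mu_t', A_t')$ by $\mu_t' := \phi_t^{-1} \circ \mu_t \circ (\phi_t \ot \phi_t)$ and $A_t' := \phi_t^{-1} \circ A_t \circ \phi_t$. By construction $(\mu_t', A_t')$ is equivalent to $(\mu_t, A_t)$, and by the coboundary computation recorded in Theorem~\ref{thm: equivalent infini in same cohomology} its first-order term $(\mu_1', A_1')$ equals $(\mu_1, A_1) - d(\phi_1) = 0$.

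Next I would run the induction. Suppose after finitely many equivalences we have reduced to a deformation of the form $\mu_t = \mu + \mu_n t^n + \mu_{n+1} t^{n+1} + \cdots$ and $A_t = A + A_n t^n + A_{n+1} t^{n+1} + \cdots$ with $n \geq 2$. The key claim is that $(\mu_n, A_n)$ is again a $2$-cocycle in $C^\bullet_{\AvA}(R)$: extracting the $t^n$ coefficient of the associativity and averaging equations (\ref{ass}), (\ref{avol}), (\ref{avor}), all intermediate cross-terms vanish since $\mu_i = 0$ and $A_i = 0$ for $1 \leq i \leq n-1$, and one obtains exactly $\delta(\mu_n) = 0$, $\partial_r(A_n) + \Phi_r^2(\mu_n) = 0$, $\partial_l(A_n) + \Phi_l^2(\mu_n) = 0$, i.e.\ $d(\mu_n, A_n) = 0$. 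Applying $\rmH^2_{\AvA}(R) = 0$ gives $\phi_n \in \Hom(R,R)$ with $d(\phi_n) = (\mu_n, A_n)$, and conjugating by $\phi_t := \id + \phi_n t^n$ produces an equivalent deformation whose terms of orders $1, \ldots, n$ vanish.

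Finally I would compose the infinite family of isomorphisms in the $t$-adic topology: the product $\cdots \circ (\id + \phi_n t^n) \circ \cdots \circ (\id + \phi_2 t^2)$ converges as a formal power series in $t$ with coefficients in $\End(R)$, because each new factor is congruent to the identity modulo $t^n$. The resulting formal isomorphism sends $(\mu_t, A_t)$ to the trivial deformation $(\mu, A)$, proving rigidity. The main technical obstacle is the bookkeeping in the inductive cocycle verification, namely writing the mixed terms in (\ref{avol})--(\ref{avor}) carefully enough to see that at stage $n$ they collapse to $\partial_r(A_n) + \Phi_r^2(\mu_n)$ (and its left analogue) with the correct signs matching the totalization of the bicomplex; once that sign bookkeeping is fixed, the rest of the argument is formal.
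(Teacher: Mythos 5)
Your proof is correct and follows essentially the same route as the paper's: express the infinitesimal as a coboundary, conjugate by $\id+\phi_n t^n$ to kill the lowest-order term, and iterate (the paper compresses your induction and $t$-adic limit into the phrase ``by repeating the argument''). The only slip is that the degree-one part of the total complex is $\Hom(R,R)\oplus\Hom(k,R)$ rather than $\Hom(R,R)$ alone; the paper handles this by writing the primitive as $(\phi_1',x)$ and absorbing $x\in\Hom(k,R)$ into $\phi_1:=\phi_1'+\delta^0(x)$, which is the one small extra step your sketch needs.
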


\begin{proof}

Let $(R[[t]], \mu_t, A_t)$ be a $1$-parameter formal deformation of $(R, \mu, A)$. By Proposition~,
$(\mu_1, A_1)$ is a $2$-cocycle. By $\rmH^2_{\AvA}(R)=0$, there exists a $1$-cochain $(\phi_1', x) \in  C^1_{\Alg}(R, R)\oplus \Hom(k, M)$ such that
$(\mu_1, A_1) = -d_{\AvA}(\phi_1', x), $
that is, $\mu_1=-\delta^1(\phi_1')$ and $A_1=-\partial^(x)-\Phi^1(\phi_1')$. Since $\Phi^0=Id$, let $\phi_1=\phi_1'+\delta^0(x)$. Then 
 $\mu_1=-\delta^1(\phi_1)$ and $A_1=-\Phi^1(\phi_1')$.

Setting $\phi_t = Id_R +\phi_1t$, we have a deformation $(R[[t]], \overline{\mu}_t, \overline{A}_t)$, where
$$\overline{\mu}_t=\phi_t^{-1}\circ \mu_t\circ (\phi_t\times \phi_t)$$
and $$\overline{A}_t=\phi_t^{-1}\circ A_t\circ \phi_t.$$
 It is not difficult to see that 
 $$\begin{array}{rcl} \overline{\mu}_t&=& \mu+\overline{\mu}_2t^2+\cdots,\\
 A_t&=& A+\overline{A}_2t^2+\cdots.\end{array}$$
Then by repeating the argument, we can show that $ (R[[t]], \mu_t , A_t) $ is equivalent to the trivial extension $(R[[t]], \mu, A).$
Thus, $(R,\mu,A)$ is rigid.

\end{proof}
 \section{Abelian extensions of averaging algebras}

 In this section, we study abelian extensions of averaging algebras and show that they are classified by the second cohomology, as one would expect of a good cohomology theory.

 \begin{defn}
 	An {\bf abelian extension} of averaging algebras is a short exact sequence of homomorphisms of averaging algebras
 	\[\begin{CD}
 		0@>>> {M} @>i >> \hat{R} @>p >> R @>>>0\\
 		@. @V {A_M} VV @V {\hat{A}} VV @V A VV @.\\
 		0@>>> {M} @>i >> \hat{R} @>p >> R @>>>0
 	\end{CD}\]
 	such that $uv=0$ for all $u,v\in M.$
 \end{defn}

 We will call $(\hat{R},\hat{A})$ an abelian extension of $(R,A)$ by $(M,A_M)$.

 \begin{defn}
 	Let $(\hat{R}_1,\hat{A}_1)$ and $(\hat{R}_2,\hat{A}_2)$ be two abelian extensions of $(R,A)$ by $(M,A_M)$. They are said to be {\bf isomorphic} if there exists an isomorphism of averaging algebras $\zeta:(\hat{R}_1,\hat{A}_1)\rar (\hat{R}_2,\hat{A}_2)$ such that the following commutative diagram holds:
 	\[\begin{CD}
 		0@>>> {(M,A_M)} @>i >> (\hat{R}_1,{\hat{A}_1}) @>p >> (R,A) @>>>0\\
 		@. @| @V \zeta VV @| @.\\
 		0@>>> {(M,A_M)} @>i >> (\hat{R}_2,{\hat{A}_2}) @>p >> (R,A) @>>>0.
 	\end{CD}\]
 \end{defn}

 A {\bf section} of an abelian extension $(\hat{R},{\hat{A}})$ of $(R,A)$ by $(M,A_M)$ is a linear map $s:R\rar \hat{R}$ such that $p\circ s=\Id_R$.

 Now for an abelian extension $(\hat{R},{\hat{A}})$ of $(R,A)$ by $(M,A_M)$ with a section $s:R\rar \hat{R}$, we define linear maps $\rho_l: R\to \mathrm{End}_\bk(M),~ r\mapsto (m\mapsto rm)$ and $\rho_r: R\to \mathrm{End}_\bk(M),~ r\mapsto (m\mapsto mr)$ respectively by
 $$
 rm:=s(r)m,\quad mr:=ms(r), \quad \forall r\in R, m\in M.
 $$
 \begin{prop}
 	With the above notations, $(M,\rho_l,\rho_r,A_M)$ is a bimodule over the averaging algebra $(R,A)$.
 \end{prop}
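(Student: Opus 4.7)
The plan is to deduce every axiom directly from the averaging-algebra structure already present on $\hat{R}$, exploiting two structural observations. First, the image of $M$ in $\hat{R}$ is a two-sided ideal with $M\cdot M=0$: the former because $\ker p = M$, the latter by the standing hypothesis $uv=0$ for all $u,v\in M$. Second, although the section $s$ is only $\bk$-linear, its failures to respect the structure all land in $M$: both $s(rr')-s(r)s(r')$ (because $p\circ s=\Id_R$) and $\hat{A}(s(r))-s(A(r))$ (because $p\circ \hat{A}=A\circ p$) lie in $M$ for every $r,r'\in R$; moreover $\hat{A}|_M = A_M$, so on $M\subseteq \hat{R}$ the operators $\hat{A}$ and $A_M$ coincide.

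The first step is to verify the associative bimodule axioms for $(M,\rho_l,\rho_r)$. The calculation for $(rr')m=r(r'm)$ is representative: writing $s(rr')=s(r)s(r')+\epsilon$ with $\epsilon\in M$ gives
$$(rr')m - r(r'm) = s(rr')m - s(r)\bigl(s(r')m\bigr) = \epsilon\cdot m \in M\cdot M = 0,$$
where the middle equality uses associativity in $\hat{R}$. The remaining compatibilities $(mr)r'=m(rr')$ and $(rm)r'=r(mr')$ are verified by the same template.

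The second step is to verify the averaging compatibilities $A(r)A_M(m) = A_M(A(r)m) = A_M(rA_M(m))$ together with their mirror forms. The key manipulation is: $A(r)A_M(m) = s(A(r))\hat{A}(m)$; substituting $s(A(r)) = \hat{A}(s(r))-\eta$ with $\eta\in M$ and absorbing $\eta\hat{A}(m)\in M^2=0$ yields $A(r)A_M(m) = \hat{A}(s(r))\hat{A}(m)$. The averaging axiom in $\hat{R}$ then rewrites this as both $\hat{A}(s(r)\hat{A}(m))$ and $\hat{A}(\hat{A}(s(r))m)$; each is $\hat{A}$ applied to an element of $M$ (since the $p$-image vanishes), hence equals $A_M$ of the same element. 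One final absorption of an error term — using $s(r)\hat{A}(m) = s(r)A_M(m) = r\cdot A_M(m)$ on one side and $\hat{A}(s(r))m = s(A(r))m + (\hat{A}(s(r))-s(A(r)))m = A(r)\cdot m$ on the other — identifies them with $A_M(rA_M(m))$ and $A_M(A(r)m)$, respectively. The mirror identities follow by symmetric arguments.

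There is no genuine obstacle; the entire proof is bookkeeping, and its heart is the systematic use of $M\cdot M = 0$ to discard error terms arising from $s$ being neither a multiplicative homomorphism nor a strict intertwiner of the averaging operators. A pleasant byproduct is that the resulting bimodule structure depends only on the extension $(\hat{R},\hat{A})$ and not on the choice of section $s$, since any two sections differ by a map $R\to M$ and the discrepancy again vanishes upon multiplication by elements of $M$.
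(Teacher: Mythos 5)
Your proof is correct and follows essentially the same route as the paper's: use that $M$ is an ideal with $M\cdot M=0$ to discard the error terms $s(xy)-s(x)s(y)$ and $\hat{A}(s(r))-s(A(r))$, then apply the averaging identity in $\hat{R}$ to $\hat{A}(s(r))\hat{A}(m)$ and push the results back through $A_M=\hat{A}|_M$. Your write-up is in fact slightly more careful than the paper's (which leaves the absorption of $\eta\hat{A}(m)$ and the identification $\hat{A}(s(r))m=A(r)m$ implicit), so nothing further is needed.
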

 \begin{proof}
 	For any $x,y\in R,\,v\in M$, since $s(xy)-s(x)s(y)\in M$ implies $s(xy)m=s(x)s(y)m$, we have
 	\[\rho_l(xy)(m)=s(xy)m=s(x)s(y)m=\rho_l(x)\circ\rho_l(y)(m).\]
 	Hence, $\rho_l$ is an algebra homomorphism. Similarly, $\rho_r$ is an algebra anti-homomorphism. Moreover, ${\hat{A}}(s(r))-s(A(r))\in M$ means that  ${\hat{A}}(s(r))m=s(A(r))m$. Thus we have
 	\begin{align*}
 		A(r)A_M(m)&=s(A(r))A_M(m)\\
 		&=\hat{A}(s(r))A_M(m)\\
 		&=\hat{A}(s(r)A_M(m))=\hat{A}(\hat{A}(sr)m)\\
 		&=A_M(rA(m))=A_M(A(r)m).
 	\end{align*}
 It is similar to see $A_M(m)A(r)=A_M(A_M(m)r)=A_M(mA(r))$.
 	Hence, $(M,\rho_l,\rho_r,A_M)$ is a bimodule over $(R,A)$.
 \end{proof}

 We  further  define linear maps $\psi:R\otimes R\rar M$ and $\chi:R\rar M$ respectively by
 \begin{align*}
 	\psi(x\ot y)&=s(x)s(y)-s(xy),\quad\forall x,y\in R,\\
 	\chi(x)&={\hat{A}}(s(x))-s(A(x)),\quad\forall x\in R.
 \end{align*}
 We transfer the averaging algebra structure on $\hat{R}$ to $R\oplus M$ by endowing $R\oplus M$ with a multiplication $\cdot_\psi$ and an averaging operator $A\chi$ defined by
 \begin{align}
 	\label{eq:mul}(x,m)\cdot_\psi(y,n)&=(xy,xn+my+\psi(x,y)),\,\forall x,y\in R,\,m,n\in M,\\
 	\label{eq:dif}A_\chi(x,m)&=(A(x),\chi(x)+A_M(m)),\,\forall x\in R,\,m\in M.
 \end{align}

 \begin{prop}\label{prop:2-cocycle}
 	The triple $(R\oplus M,\cdot_\psi,A_\chi)$ is a averaging algebra   if and only if
 	$(\psi,\chi)$ is a 2-cocycle  of the averaging algebra $(R,A)$ with the coefficient  in $(M,A_M)$.
 \end{prop}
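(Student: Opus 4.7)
The plan is to write out the three defining conditions of an averaging algebra for $(R\oplus M,\cdot_\psi,A_\chi)$ explicitly and to compare them term by term with the three equations that characterize a $2$-cocycle in $C^\bullet_{\AvA}(R,M)$; namely $\delta(\psi)=0$ together with $\partial_r\chi+\Phi_r^2(\psi)=0$ and $\partial_l\chi+\Phi_l^2(\psi)=0$.

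First I would expand both $((x,m)\cdot_\psi (y,n))\cdot_\psi (z,p)$ and $(x,m)\cdot_\psi((y,n)\cdot_\psi (z,p))$ using \eqref{eq:mul}. Because $M$ is an abelian ideal, all products of elements of $M$ vanish, and because $M$ is a genuine $R$-bimodule, the bilinear cross terms line up. What survives in the $M$-component of the difference is exactly the Hochschild coboundary
\begin{equation*}
\delta(\psi)(x,y,z) = x\psi(y,z) - \psi(xy,z) + \psi(x,yz) - \psi(x,y)z,
\end{equation*}
so associativity of $\cdot_\psi$ is equivalent to $\delta(\psi)=0$.

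Next, I would expand both sides of $A_\chi((x,m)\cdot_\psi A_\chi(y,n)) = A_\chi(x,m)\cdot_\psi A_\chi(y,n)$ using \eqref{eq:mul} and \eqref{eq:dif}. The $R$-components match immediately thanks to the averaging identity $A(xA(y))=A(x)A(y)$ on $R$. For the $M$-components, the bimodule--averaging compatibilities $A(x)A_M(n)=A_M(xA_M(n))$ and $A_M(m)A(y)=A_M(mA(y))$ from the definition of a bimodule over $(R,\cdot,A)$ cancel every term containing $m$ or $n$, and what remains is the identity
\begin{equation*}
A(x)\chi(y) - A_M(x\chi(y)) - \chi(xA(y)) + \chi(x)A(y) + \psi(A(x),A(y)) - A_M(\psi(x,A(y))) = 0,
\end{equation*}
which one reads off as $(\partial_r\chi + \Phi_r^2(\psi))(x,y)=0$. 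A symmetric computation for the identity $A_\chi(A_\chi(x,m)\cdot_\psi (y,n))=A_\chi(x,m)\cdot_\psi A_\chi(y,n)$ yields $\partial_l\chi + \Phi_l^2(\psi)=0$.

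Combining these three equivalences, and noting that the differential on the totalization $C^\bullet_{\AvA}(R,M)$ was defined so that its $2$-cocycle condition is precisely the conjunction of the three equations above, proves the proposition in both directions. The main bookkeeping hurdle is sign tracking: the $(-1)^n$ twist imposed on $\Phi^n$ to turn the bicomplex into a complex must match the signs produced when transporting all $M$-components of the averaging identities to one side. Once this sign check is carried out consistently, the proof is a routine calculation in which every cancellation is explained either by Proposition~\ref{newmultiplication}, by Proposition~\ref{newbimodule}, or by the triviality of the multiplication on $M$.
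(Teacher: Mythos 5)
Your proposal is correct and follows essentially the same route as the paper: expand associativity of $\cdot_\psi$ to get $\delta(\psi)=0$, expand the two averaging identities for $A_\chi$, use the bimodule compatibilities to cancel all terms involving $m,n$, and read off $\partial_r\chi+\Phi_r^2(\psi)=0$ and $\partial_l\chi+\Phi_l^2(\psi)=0$. The two displayed residual identities you obtain match the paper's exactly, so no further changes are needed.
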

 \begin{proof}
 	If $(A\oplus M,\cdot_\psi,A_\chi)$ is a averaging algebra, then the associativity of $\cdot_\psi$ implies
 	\begin{equation}
 		\label{eq:mc}x\psi(y\ot z)-\psi(xy\ot z)+\psi(x\ot yz)-\psi(x\ot y)z=0,
 	\end{equation}
 which means $\delta(\phi)=0$ in $C^\bullet(A,M)$.
 Since $A_\chi$ is an averaging operator,
 for any $x,y\in R, m,n\in M$, we have
 $$A_\chi((x,m))\cdot_\psi A_\chi((y,n))=A_\chi(A_\chi(x,m)\cdot_\psi(y,n))=A_\chi((x,m)\cdot_\psi A_\chi(y,n))$$
 	Then $\chi,\psi$ satisfy the following equations:
 	\begin{align*}
 		[\psi(A(x)\ot A(y))-A_M(\psi(A(x)\ot y))]+[A(x)\chi(y)-A_M(\chi(x)y)-\chi(A(x)y)+\chi(x)A(y)]&=0\\
 		[\psi(A(x)\ot A(y))-A_M(\psi(x\ot A(y)))]+[A(x)\chi(y)-A_M(x\chi(y))-\chi(xA(y))+\chi(x)A(y)]&=0
 	\end{align*}
 	
 	That is,
 	\[\partial_r^1(\chi)+\Phi_r^2(\psi)=0, \partial_l^1(\chi)+\Phi_l^2(\psi)=0.\]
 Hence, $(\psi,\chi)$ is a  2-cocycle.
 	
 	Conversely, if $(\psi,\chi)$ is a 2-cocycle, one can easily check that $(R\oplus M,\cdot_\psi,A_\chi)$ is an averaging algebra.
 \end{proof}

 Now we are ready to classify abelian extensions of an averaging algebra.

 \begin{theorem}
 	Let $M$ be a vector space and  $A_M\in\End_\bk(M)$.
 	Then abelian extensions of an averaging algebra $(R,A)$ by $(M,A_M)$ are classified by the second cohomology group ${{H}}_{\AvA}^2(R,M)$ of $(R,A)$ with coefficients in the bimodule $(M,A_V)$.
 \end{theorem}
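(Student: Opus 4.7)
The plan is to construct mutually inverse maps between the set of isomorphism classes of abelian extensions of $(R,A)$ by $(M,A_M)$ and the cohomology group $H^2_{\AvA}(R,M)$.

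For the forward map, given an abelian extension $0 \to M \xrightarrow{i} \hat{R} \xrightarrow{p} R \to 0$, I would choose a $\bk$-linear section $s$ of $p$ and define $\psi(x \otimes y) = s(x)s(y) - s(xy)$ and $\chi(x) = \hat{A}(s(x)) - s(A(x))$. Both land in $\ker p = M$, and Proposition \ref{prop:2-cocycle} identifies $(\psi, \chi)$ as a 2-cocycle in $C^2_{\AvA}(R,M)$. Well-definedness of the class $[(\psi, \chi)]$ requires two checks. If $s' = s + \phi$ with $\phi \in \Hom(R, M)$ is another section, then using the abelian condition $\phi(x)\phi(y) = 0$ one obtains directly $\psi' - \psi = \delta^1(\phi)$ and $\chi' - \chi = -\Phi^1(\phi)$, so that $(\psi', \chi') - (\psi, \chi)$ is the coboundary of $(\phi, 0) \in C^1_{\AvA}(R,M)$. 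For an isomorphism of extensions $\zeta: \hat{R}_1 \to \hat{R}_2$, taking $s_2 := \zeta \circ s_1$ produces the same pair $(\psi, \chi)$, so the class depends only on the isomorphism class.

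For the backward map, a 2-cocycle $(\psi, \chi)$ yields the averaging algebra $(R \oplus M, \cdot_\psi, A_\chi)$ of Proposition \ref{prop:2-cocycle}, which together with the obvious inclusion and projection forms an abelian extension. The main task is to show cohomologous cocycles produce isomorphic extensions. Given $(\psi' - \psi,\, \chi' - \chi) = d(\phi, m)$ for a 1-cochain $(\phi, m) \in \Hom(R, M) \oplus M$, I would propose the map
\[ \zeta(x, n) = \bigl(x,\ n - \phi(x) + xm - mx\bigr), \]
which visibly restricts to $\id_M$ and induces $\id_R$ on the quotient. Compatibility with the multiplications $\cdot_\psi$ and $\cdot_{\psi'}$ reduces to $\psi' - \psi = \delta^1 \phi$ together with $\delta^1 \circ \delta^0 = 0$ (so the inner-derivation correction $x \mapsto xm - mx = \delta^0(m)(x)$ does not alter the Hochschild coboundary). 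Compatibility with the averaging operators $A_\chi$ and $A_{\chi'}$ rests on the key identity $\Phi^1 \circ \delta^0 = \partial^0$, which a short computation from the explicit formulas confirms; this is precisely what lets the $M$-summand of the 1-cochain be absorbed as an inner-derivation shift in the isomorphism.

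Finally, the two constructions are mutually inverse. The canonical section $x \mapsto (x, 0)$ of the reconstructed extension $R \oplus M \to R$ returns exactly the cocycle $(\psi, \chi)$ we started with. Conversely, for an extension $(\hat{R}, \hat{A})$ with chosen section $s$, the map $R \oplus M \to \hat{R}$, $(x, n) \mapsto s(x) + i(n)$, is an isomorphism of averaging algebras identifying the reconstructed extension with the original. The main technical obstacle lies in the cocycle-to-extension direction, specifically in verifying that a 1-cochain with nonzero $M$-component still produces a genuine isomorphism of extensions; the identity $\Phi^1 \circ \delta^0 = \partial^0$ is the linchpin, ensuring that the inner derivation $\delta^0(m) = [{-},\,m]$ built from $m \in M$ exactly cancels the degree-zero contribution $\partial^0 m$ to the averaging-operator coboundary.
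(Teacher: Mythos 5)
Your proof is correct and follows essentially the same route as the paper: extract a $2$-cocycle $(\psi,\chi)$ from a section, check that its class is independent of the section and of the representative of the isomorphism class, and conversely turn cohomologous cocycles into isomorphic extensions on $R\oplus M$ via $\zeta(x,n)=(x,n+\gamma(x))$. The one point where you are more careful than the paper is in allowing the $1$-cochain to have a nonzero $\Hom(\bk,M)$-component and absorbing it as an inner derivation through the identity $\Phi^1\circ\delta^0=\partial^0$; the paper's converse step implicitly reduces to cochains $\gamma\in\Hom(R,M)$, which is justified by exactly this identity (and is the same trick the paper uses in its rigidity argument).
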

 \begin{proof}
 	Let $(\hat{R},{\hat{A}})$ be an abelian extension of $(R,A)$ by $(M,A_M)$. We choose a section $s:R\rar \hat{R}$ to obtain a 2-cocycle $(\psi,\chi)$ by Proposition~\ref{prop:2-cocycle}. We first show that the cohomological class of $(\psi,\chi)$ does not depend on the choice of sections. Indeed, let $s_1$ and $s_2$ be two distinct sections providing 2-cocycles $(\psi_1,\chi_1)$ and $(\psi_2,\chi_2)$ respectively. We define $\phi:R\rar M$ by $\gamma(r)=s_1(r)-s_2(r)$. Then
 	\begin{align*}
 		\psi_1(x,y)&=s_1(x)s_1(y)-s_1(xy)\\
 		&=(s_2(x)+\gamma(x))(s_2(y)+\gamma(y))-(s_2(xy)+\gamma(xy))\\
 		&=(s_2(x)s_2(y)-s_2(xy))+s_2(x)\gamma(y)+\gamma(x)s_2(y)-\gamma(xy)\\
 		&=(s_2(x)s_2(y)-s_2(xy))+x\gamma(y)+\gamma(x)y-\gamma(xy)\\
 		&=\psi_2(x,y)+\delta(\gamma)(x,y)
 	\end{align*}
 	and
 	\begin{align*}
 		\chi_1(x)&={\hat{A}}(s_1(x))-s_1(A(x))\\
 		&={\hat{A}}(s_2(x)+\gamma(x))-(s_2(A(x))+\gamma(A(x)))\\
 		&=({\hat{A}}(s_2(x))-s_2(A(x)))+{\hat{A}}(\gamma(x))-\gamma(A(x))\\
 		&=\chi_2(x)+d_V(\gamma(x))-\gamma(d_A(x))\\
 		&=\chi_2(x)-\Phi^1(\gamma)(x).
 	\end{align*}
 	That is, $(\psi_1,\chi_1)=(\psi_2,\chi_2)+d(\gamma)$. Thus $(\psi_1,\chi_1)$ and $(\psi_2,\chi_2)$ are in the same cohomological class  {in $H_{\AvA}^2(R,M)$}.
 	
 	Next we prove that isomorphic abelian extensions give rise to the same element in  {$H_{\AvA}^2(R,M)$.} Assume that $(\hat{R}_1,{\hat{A}_1})$ and $(\hat{R}_2,{\hat{A}_2})$ are two isomorphic abelian extensions of $(R,A)$ by $(M,A_M)$ with the associated homomorphism $\zeta:(\hat{R}_1,{\hat{A}_1})\rar (\hat{R}_2,{\hat{A}_2})$. Let $s_1$ be a section of $(\hat{R}_1,{\hat{A}_1})$. As $p_2\circ\zeta=p_1$, we have
 	\[p_2\circ(\zeta\circ s_1)=p_1\circ s_1=\Id_{R}.\]
 	Therefore, $\zeta\circ s_1$ is a section of $(\hat{R}_2,{\hat{A}_2})$. Denote $s_2:=\zeta\circ s_1$. Since $\zeta$ is a homomorphism of differential algebras such that $\zeta|_M=\Id_M$, we have
 	\begin{align*}
 		\psi_2(x\ot y)&=s_2(x)s_2(y)-s_2(xy)=\zeta(s_1(x))\zeta(s_1(y))-\zeta(s_1(xy))\\
 		&=\zeta(s_1(x)s_1(y)-s_1(xy))=\zeta(\psi_1(x,y))\\
 		&=\psi_1(x,y)
 	\end{align*}
 	and
 	\begin{align*}
 		\chi_2(x)&={\hat{A}_2}(s_2(x))-s_2(A(x))={\hat{A}_2}(\zeta(s_1(x)))-\zeta(s_1(A(x)))\\
 		&=\zeta({\hat{A}_1}(s_1(x))-s_1(A(x)))=\zeta(\chi_1(x))\\
 		&=\chi_1(x).
 	\end{align*}
 	Consequently, all isomorphic abelian extensions give rise to the same element in {${H}_{\AvA}^2(R,M)$}.
 	
 	Conversely, given two 2-cocycles $(\psi_1,\chi_1)$ and $(\psi_2,\chi_2)$, we can construct two abelian extensions $R\oplus M,\cdot_{\psi_1},A_{\chi_1})$ and  $(R\oplus M,\cdot_{\psi_2},A_{\chi_2})$ via equalities~\eqref{eq:mul} and \eqref{eq:dif}. If they represent the same cohomological class {in $H_{\AvA}^2(R,M)$}, then there exists a linear map $\gamma:R\to M$ such that $$(\psi_1,\chi_1)=(\psi_2,\chi_2)+(\delta(\gamma),\Phi^1(\gamma)).$$ Define $\zeta:R\oplus M\rar R\oplus M$ by
 	\[\zeta(r,m):=(r,\gamma(r)+m).\]
 	Then $\zeta$ is an isomorphism of these two abelian extensions.
 \end{proof}

\section{$L_\infty$-structure on the cochain complex}

Let's recall the definition of $L_\infty$-algebras.

For graded indeterminates $t_1,\dots,t_n$ and $\sigma\in S_n$, the Koszul sign $\epsilon(\sigma, t_1,\dots, t_n)$ is defined by
$$t_1\cdot t_2 \dots\cdot t_n=\epsilon(\sigma, t_1,\dots,
t_n)t_{\sigma(1)}\cdot t_{\sigma(2)}\dots\cdot t_{\sigma(n)},$$
where $``\cdot"$ is the multiplication in the free graded commutative algebra $k\langle t_1,\dots,t_n\rangle/(t_it_j-(-1)^{|t_i||t_j|}t_jt_i)$ generated by $t_1,\dots,t_n$. Define  also $\chi(\sigma,t_1,\dots,t_n)=sgn(\sigma)\epsilon(\sigma,t_1,\dots,t_n)$.

\begin{defn}{\textrm{(\cite{JS90,LS93,LM02})}}\label{def-L-infty}  Let $L=\bigoplus\limits_{i\in \mathbb{Z}}L_i$ be a graded
	space over $\bk$.  Assume that $L$ is endowed with a family of linear
	operators $\{l_n:L^{\ot n}\rightarrow L\}_{n\geqslant 1}$ with $|l_n|=n-2$ satisfying the following conditions:
	\begin{itemize}
		\item[(i)] $l_n(x_{\sigma(1)}\ot \dots\ot x_{\sigma(n)})=\chi(\sigma,x_1,\dots,x_n)l_n(x_1\ot \dots\ot x_n)$ , $\forall \sigma\in S_n$, $x_1,\dots,x_n\in L$,
		\item[(ii)]$\sum\limits_{i=1}^n\sum\limits_{\sigma\in { S}(i,n-i)}\chi(\sigma,x_1,\dots,x_n)(-1)^{i(n-i)}l_{n-i+1}(l_i(x_{\sigma(1)}\ot \dots\ot x_{\sigma(i)})\ot x_{\sigma(i+1)}\ot \dots\ot x_{\sigma(n)})=0,$
		where $S(i,n-i)$ is the set of all $(i,n-i)$-shuffles, i.e., $S(i,n-i)=\{\sigma\in S_n,|\sigma(1)<\sigma(2)<\dots<\sigma(i),\ \sigma(i+1)<\sigma(i+2)<\dots<\sigma(n)\}$,
	\end{itemize}
	Then $(L,\{l_n\}_{n\geqslant 1})$ is called a $L_\infty$-algebra.
\end{defn}

Let $sL$ be the suspension of $L$, i.e., $(sL)_{n}=L_{n-1}$.
Let  $S^\bullet(sL)$ be the cofree cocommutative coalgebra  generated by $sL$
. Then $\{l_n\}_{n\geqslant 1}$ will
determine a family of operators $\{d_{n}\}_{n\geqslant 1}$, where $d_n:(sL)^{\ot n}\rightarrow sL$ is
defined as $d_n=(-1)^{\frac{n(n-1)}{2}}s\circ l_n\circ ({s^{-1}}^{\ot n})$. Then $\{d_n\}_{n\geqslant 1}$ satisfies the equation
\begin{eqnarray}\label{coderivation}
	\sum_{\sigma\in S(i,n-i)}\varepsilon(\sigma,sx_1,\dots,sx_n)d_{n-i+1}(d_i(sx_{\sigma(1)},\dots,sx_{\sigma(i)}),sx_{\sigma(i+1)},\dots,sx_{\sigma(n)})=0.
\end{eqnarray}

Denote $\sum\limits_{i=1}^\infty d_n:S^\bullet(sL)\rightarrow sL$ by $d'$. Then $d'$ can induce a coderivation $d$ on $S^\bullet(sL)$. Equation (\ref{coderivation}) ensures that the coderivation $d$ is a differential, i.e., $d^2=0$. This can be considered as an equivalent definition of $L_\infty$-algebra.

\begin{defn}Let $(L,\{l_n\}_{n\geqslant 1})$ be a $L_\infty$-algebra and $\alpha\in L_{-1}$. We call $\alpha$ a Maurer-Cartan element if it satisfies the following equation:
	$$\sum_{n=1}^{\infty}\frac{1}{n!}(-1)^{\frac{n(n-1)}{2}}l_n(\alpha^{\ot n})=0.$$
\end{defn}

\begin{prop}Given a Maurer-Cartan element $\alpha$ in
	$L_\infty$-algebra $L$, we can define a new
	$L_\infty$-structure $\{l^\alpha_n\}_{n\geqslant 1}$ on $L$, where $l^\alpha_n:L^{\ot n}\rightarrow L$ is defined as:
	$$l^\alpha_n(x_1\ot \dots\ot x_n)=\sum_{i=0}^\infty\frac{1}{i!}(-1)^{in+\frac{i(i+1)}{2}+\sum\limits_{k=1}^{n-1}\sum\limits_{j=1}^k|x_j|}l_{n+i}(\alpha^{\ot i}\ot x_1\ot \dots\ot x_n).$$
\end{prop}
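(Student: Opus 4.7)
The plan is to exploit the coalgebra reformulation of $L_\infty$-algebras recalled just after Definition~\ref{def-L-infty} and to realize the twisted brackets as coming from the conjugation of the codifferential $d$ on $S^\bullet(sL)$ by an exponential automorphism built from $\alpha$. Since $\alpha\in L_{-1}$, the suspended element $s\alpha$ lies in $(sL)_0$ and is of degree zero; therefore the series $e^{s\alpha}:=\sum_{i\geqslant 0}\tfrac{1}{i!}(s\alpha)^{i}$ is a well-defined group-like element of degree $0$ in the completed cofree cocommutative coalgebra $\widehat{S}^\bullet(sL)$. Translation by $e^{s\alpha}$ extends uniquely to a coalgebra automorphism $\Phi_\alpha:S^\bullet(sL)\to S^\bullet(sL)$, with inverse $\Phi_{-\alpha}$.

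I would then set $d^\alpha:=\Phi_{-\alpha}\circ d\circ \Phi_\alpha$. Conjugation by a coalgebra automorphism automatically sends coderivations to coderivations and preserves the relation $d\circ d=0$, so $d^\alpha$ is itself a square-zero coderivation of $S^\bullet(sL)$, and by cofreeness it is completely determined by its projection $\pi\circ d^\alpha$ onto $sL$. A direct expansion yields
\[
\pi\circ d^\alpha(sx_1\cdots sx_n)\;=\;\sum_{i\geqslant 0}\frac{1}{i!}\,d_{n+i}\bigl((s\alpha)^{\ot i}\ot sx_1\ot\cdots\ot sx_n\bigr),
\]
and in particular the curvature component is
\[
\pi\circ d^\alpha(1)\;=\;\sum_{i\geqslant 1}\frac{1}{i!}\,d_i\bigl((s\alpha)^{\ot i}\bigr)\;=\;s\!\left(\sum_{i\geqslant 1}\frac{1}{i!}(-1)^{i(i-1)/2}l_i(\alpha^{\ot i})\right),
\]
which vanishes precisely because $\alpha$ is a Maurer--Cartan element. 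Hence $d^\alpha$ defines a genuine (uncurved) $L_\infty$-codifferential on $S^\bullet(sL)$. Translating its components back through the desuspension rule $d_k=(-1)^{k(k-1)/2}\,s\circ l_k\circ (s^{-1})^{\ot k}$ then yields a family of brackets $\{l_n^\alpha\}_{n\geqslant 1}$ on $L$; the graded antisymmetry axiom and the higher Jacobi identities are automatic consequences of $d^\alpha$ being a codifferential of the cofree cocommutative coalgebra.

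The main obstacle is, as usual, sign bookkeeping. One must assemble several contributions: (a) the Koszul sign produced by the desuspension isomorphism when $(s^{-1})^{\ot(n+i)}$ is distributed over $(s\alpha)^{\ot i}\ot sx_1\ot\cdots\ot sx_n$, which, since $|s\alpha|=0$ in $sL$, contributes only from the $sx_k$'s and yields the $\sum_{k=1}^{n-1}\sum_{j=1}^{k}|x_j|$ summand; (b) the sign $(-1)^{(n+i)(n+i-1)/2}$ relating $d_{n+i}$ to $l_{n+i}$; and (c) the inverse sign $(-1)^{n(n-1)/2}$ relating $d_n^\alpha$ back to $l_n^\alpha$. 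Patiently collecting these contributions should produce the exponent $in+\tfrac{i(i+1)}{2}+\sum_{k=1}^{n-1}\sum_{j=1}^{k}|x_j|$ displayed in the statement, at which point the proof is complete.
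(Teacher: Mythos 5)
Your proposal is correct and follows essentially the same route as the paper, which likewise passes to the coalgebra picture on $S^\bullet(sL)$, defines $d_n^\alpha$ by inserting powers of $s\alpha$, and observes that the Maurer--Cartan equation kills the curvature term so that the twisted family is again a codifferential. The paper only sketches this (it does not carry out the sign bookkeeping either), so your more explicit framing via conjugation by the translation automorphism $\Phi_\alpha$ is a faithful elaboration of the same argument.
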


Let $\alpha$ be a Maurer Cartan element in $L_\infty$ algebra $L$. Use the equivalent definition of $L_\infty$-algebra, we can see $s\alpha\in sL$ satisfies the following equation on $S^\bullet(sL)$:
\[\sum_{n=1}^\infty \frac{1}{n!}d_{n}\big((s\alpha)^{\ot n}\big)=0.\] Define \[d_n^\alpha(sx_1\ot \dots sx_n)=\sum_{i=0}^\infty\frac{1}{i!}d_{n+1}\big((s\alpha)^{\ot i}\ot sx_1\ot \dots \ot sx_n\big).\]
Then the family $\{d_n^\alpha\}_{n\geqslant 1}$ can also induce a differential on $S^\bullet(sL)$.

\medskip

Let $W$ be a graded space and $T^c(W)$ be the cofree coalgebra generated by $W$. For $f\in \Hom(W^{\ot n},W),g_i\in \Hom(W^{\ot l_i},W), 1\geqslant i\geqslant m$,  $f\bar\circ(g_1,\dots,g_m)\in \Hom(W^{\ot l_1+\dots+l_m+n-m},W)$   is defined as follows:

\begin{align*}&\big(f\bar\circ(g_1,\dots,g_m)\big)(w_1\ot \dots \ot w_{l_1+\dots+l_m+n-m})=\\
	&\sum_{ 0\leqslant i_1\leqslant i_2\leqslant\dots\leqslant i_m} (-1)^{\eta}f\Big(w_1\ot \dots w_{i_1}\ot g_{1}(w_{i_1+1}\ot \dots )\ot\dots g_{k}(w_{i_{k}+1}\ot \dots ) \ot\dots \ot  g_m(w_{i_m+1}\ot \dots )\ot \dots \Big)
\end{align*}
where $\eta=\sum\limits_{k=1}^m|g_k|(\sum\limits_{j=1}^{i_k}|w_j|)$.

For any $f,g\in \Hom(T^c(W),W)$,  their Gerstenhaber bracket $[f,g]_G\in \Hom(T^c(W),W)$ is defined as :
\[[f,g]_G=f\bar\circ g-(-1)^{|f||g|}g\bar\circ f.\]

\begin{lemma}{\cite{Ger63}}
	Let $W$ be a graded space. Then $(\Hom(T^c(W),W),[-,-]_G)$ forms a graded Lie-algebra.
\end{lemma}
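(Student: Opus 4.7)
The plan is to realize the Gerstenhaber bracket $[-,-]_G$ as the graded commutator of coderivations on the cofree tensor coalgebra $T^c(W)$. By the universal property of $T^c(W)$, the projection $\pi\colon T^c(W)\rightarrow W$ onto cogenerators induces a bijection between coderivations of $T^c(W)$ and linear maps $T^c(W)\rightarrow W$: every $f\in \Hom(W^{\ot n},W)$ lifts uniquely to a coderivation $D_f$ acting on $w_1\ot\cdots\ot w_k$ by summing, with Koszul signs, over all ways of applying $f$ to a block of $n$ consecutive entries. I would extend this correspondence linearly to all of $\Hom(T^c(W),W)$ and grade both sides so that $f\in\Hom(W^{\ot n},W)$ has degree $n-1$, matching the degree of $D_f$ viewed as a graded endomorphism of $T^c(W)$ with its tensor grading.

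First I would verify that for homogeneous $f,g\in\Hom(T^c(W),W)$ the corestriction $\pi\circ D_f\circ D_g$ coincides with the circle product $f\,\bar\circ\, g$ (the $m=1$ case of the definition given above). This is a direct combinatorial check: after applying $D_g$ and then $D_f$, projection to cogenerators annihilates every term in which $f$ does not absorb the slot produced by $g$, leaving precisely the sum that defines $\bar\circ$, and the Koszul sign $(-1)^{|g|\sum_{j\leq i_1}|w_j|}$ arises from commuting $D_g$ past the initial segment of tensor factors before it acts. Consequently,
\[ [f,g]_G \;=\; f\,\bar\circ\, g - (-1)^{|f||g|} g\,\bar\circ\, f \;=\; \pi\circ\bigl(D_f\circ D_g - (-1)^{|f||g|}D_g\circ D_f\bigr). \]

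Next I would invoke the standard closure result that the graded commutator of two coderivations on any coalgebra is again a coderivation. Hence the right-hand side above is of the form $D_h$ for some $h\in\Hom(T^c(W),W)$, and by the uniqueness in the universal property, $D_{[f,g]_G}=D_f\circ D_g-(-1)^{|f||g|}D_g\circ D_f$. Therefore $f\mapsto D_f$ is an isomorphism of graded vector spaces intertwining $[-,-]_G$ with the graded commutator on the graded associative algebra $\End(T^c(W))$. Since graded commutators on any graded associative algebra automatically satisfy graded antisymmetry and the graded Jacobi identity, these identities transfer to $[-,-]_G$, giving the desired graded Lie algebra structure.

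The main obstacle is the sign bookkeeping in identifying $f\,\bar\circ\, g$ with $\pi\circ D_f\circ D_g$; once this is pinned down, graded antisymmetry is built into the definition of $[-,-]_G$ and the Jacobi identity comes for free from $\End(T^c(W))$. A purely algebraic alternative would be to verify directly that $\bar\circ$ defines a graded (right) pre-Lie product on $\Hom(T^c(W),W)$ and deduce Jacobi by antisymmetrization, but the coderivation viewpoint fits naturally with the coalgebraic treatment of $L_\infty$-structures introduced earlier in this section and will be reused when interpreting Maurer-Cartan elements via codifferentials on $S^\bullet(sL)$.
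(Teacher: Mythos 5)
Your proposal is correct, and it takes a genuinely different route from the one the paper leans on. The paper gives no proof of this lemma (it is quoted from Gerstenhaber), and the machinery it actually deploys in the appendix is the \emph{pre-Jacobi identity} for $\bar\circ$: the special case $m=n=1$ of that identity says $\bar\circ$ is a graded (right) pre-Lie product, and graded antisymmetrization of a pre-Lie product always yields a graded Lie bracket --- this is precisely the ``purely algebraic alternative'' you mention in your last paragraph, and it is Gerstenhaber's original argument. Your coderivation argument is the other standard proof: identify $\Hom(T^c(W),W)$ with $\mathrm{Coder}(T^c(W))$ via $D\mapsto\pi\circ D$, check $\pi\circ D_f\circ D_g=f\bar\circ g$, and import antisymmetry and Jacobi from the graded commutator on $\End(T^c(W))$. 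It buys conceptual economy (Jacobi is free from associativity of composition) and meshes with the codifferential picture of $L_\infty$-structures used later in the section, at the cost of having to verify the cofreeness/coderivation correspondence and the sign in $\pi\circ D_f\circ D_g=f\bar\circ g$; the pre-Lie route is self-contained but requires the combinatorial pre-Jacobi computation. One caveat: the degree that must enter both the Koszul sign in $\bar\circ$ and the commutator $[D_f,D_g]=D_fD_g-(-1)^{|f||g|}D_gD_f$ is the \emph{internal} degree of $f$ as a graded linear map on the graded space $W$, not the arity-based degree $n-1$ that you assign; these coincide only in special cases (e.g.\ $W=sA$ with $A$ concentrated in degree $0$), so for a general graded $W$ you should drop the ``degree $n-1$'' normalization and keep the degree of $D_f$ as a graded endomorphism of $T^c(W)$ with the total grading induced from $W$. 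With that adjustment the argument goes through.
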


\medskip

Now, given a graded space $V=\bigoplus\limits_{i\in \mathbb{Z}}V_i$,
we will define a $L_\infty$-algebra $\mathcal{C}_{AvA}(V)$. We
will see that when $V$ is concentrated in degree 0, an averaging
algebra structure on $V$ is equivalent to a Maurer-Cartan element in
this $L_\infty$-algebra.

\medskip

Firstly, the underlying graded space of $\mathcal{C}_{AvA}(V)$ is $$\calc_A(V)\oplus \Hom(k,V)\oplus \Hom(sV,V)\oplus \calc_{AvO}(V)_l^{\geqslant 2}\oplus \calc_{AvO}(V)_r^{\geqslant 2}.$$

where \begin{align*}
\calc_A(V)&=\Hom(T^c(sV),sV),\\
\calc_{AvO}(V)_l^{\geqslant 2}&=\Hom(\bigoplus\limits_{n=2}^\infty(sV)^{\ot n},V),\\
\calc_{AvO}(V)_r^{\geqslant 2}&=\Hom(\bigoplus\limits_{n=2}^\infty(sV)^{\ot n},V).
\end{align*}

And we define
\begin{align*}{\calc_ {AvO}}(V)_r&=\Hom(k,V)\oplus \Hom(sV,V)\oplus{\calc_{AvO}}(V)_r^{\geqslant 2}\\
	     {\calc_{AvO}}(V)_l&=\Hom(k,V)\oplus \Hom(sV,V)\oplus {\calc_{AvO}}(V)_l^{\geqslant 2}
	\end{align*}

\medskip

Now, let's give a $L_\infty$-algebra structure on $\calc_{AvA}(V)$, i.e., we need to give a family of operators $\{l_n\}_{n\geqslant 1}$ to satisfy the conditions in   Definition~\ref{def-L-infty}. Here, we identify $\Hom(T^c(sV),sV)$ with $s\Hom(T^c(sV),V)$.

The family $\{l_i\}_{i \geqslant 1}$ are defined by the following processes:
\begin{enumerate}
	\item[(I)] For $sh\in \Hom(k,sV)\subset \calc_A(V)$, $l_1(sh)=h$. And   $l_1$ vanishes elsewhere.
	
	\medskip
	
	\item[(II)] For $sg,sh\in \calc_A(V)$, $l_2(sg\ot sh)=[sg,sh]_{G}$.
	
	\medskip
	
	\item[(III)]For  homogeneous elements $sh\in \Hom((sV)^{\ot n},sV)$ in $\calc_{A}(V)$ and $g_1,\dots,g_n\in \calc_{AvO}(V)_r $, define $l_{n+1}^r(sh\ot g_1\ot \dots \ot g_n)\in \calc_{AvO}(V)_r$ in the following ways:
	
	\medskip
	
	\begin{itemize}
		\item[(i)]If $g_1,\dots, g_n$ are all contained in $\Hom(sV,V)\bigoplus\calc_{AvO}(V)^{\geqslant 2}_r$, then \begin{align*}&l_{n+1}^r(sh\ot g_1\ot\dots\ot g_n)=\\
			&\sum_{\sigma_\in S_n} (-1)^{\varepsilon}\Big(h\circ (sg_{\sigma(1)}\ot \dots \ot sg_{\sigma(n)})-(-1)^{(|g_{\sigma(1)}|+1)(|h|+1)}g_{\sigma(1)}\bar{\circ}(sh\circ(\id_{sV}\ot sg_{\sigma(2)}\ot \dots \ot sg_{\sigma(n)}))\Big)
		\end{align*}
		
		\smallskip
		
		\item[(ii)] { If there exists some $g_i$ coming from $\Hom(k,V)$, then
			
			\begin{align*}
				&l_{n+1}^r(sh\ot g_1\ot\dots\ot g_n)=
				\sum_{\sigma_\in S_n} (-1)^{\varepsilon}\Big(h\circ (sg_{\sigma(1)}\ot \dots \ot sg_{\sigma(n)})\\
				&-(-1)^{\big(|g_{\sigma(p)}|+1\big)\big(|h|+1+\sum\limits_{k=1}^{p-1}(|g_{\sigma(k)}|+1)\big)}g_{\sigma(p)}\bar{\circ}(sh\circ(g_{\sigma(1)}\ot sg_{\sigma(2)}\ot \dots\ot g_{\sigma(p-1)} \ot\id_{sV}\ot sg_{\sigma(p+1)}\ot \dots \ot sg_{\sigma(n)}))\Big)
			\end{align*}
		}
		where  $(-1)^\varepsilon=\chi(\sigma;g_1,\dots,g_n)\cdot(-1)^{n(|h|+1)+\sum\limits_{k=1}^{n-1}\sum\limits_{j=1}^k|g_{\sigma(j)}|}$, and $p$ is the integer such that $g_{\sigma(1)},\dots,g_{\sigma(p-1)}\in \Hom(k,V)$ and $g_{\sigma(p)}\in \Hom(sV,V)\oplus\calc_{AvO}(V)_r^{\geqslant 2}$.
		
	\end{itemize}

	\medskip
	
	\item[(IV)] Let $sh\in\Hom((sV)^{\ot n},sV)$ in $\calc_A(V)$,   $g_1,\dots,g_n \in  \calc_{AvO}(V)_l $ be homogeneous elements.  Define $l_{n+1}^l(sh\ot g_1\ot \dots \ot g_n)\in \calc_{AvO}(V)_l $  in the following ways:
	
	\smallskip
	
	\begin{itemize}
		
		\item[(i)] If $g_1, \dots,g_n$ all belong to $\Hom(sV,V)\bigoplus \calc_{AvO}(V)^{\geqslant 2}_l$, then define
		\begin{align*}
			&l_{n+1}^l(sh\ot g_1\ot \dots \ot g_n)=\\
			&\sum_{\sigma\in S_n}(-1)^{\varepsilon}\Big(h\circ(sg_{\sigma(1)}\ot \dots \ot sg_{\sigma(n)})-(-1)^{(g_{\sigma(n)}+1)(|h|+1+\sum\limits_{k=1}^{n-1}(|g_{\sigma(k)}|+1))}g_{\sigma(n)}\bar{\circ}(sh\circ(sg_{\sigma(1)}\ot \dots \ot sg_{\sigma(n-1)}\ot \id_{sV}))\Big),
		\end{align*}
		
		\smallskip
		
		\item[(ii)] {If there exists some $g_i\in \Hom(k,V)$, then define:
			\begin{align*}
				l_{n+1}^l&(sh\ot g_1\ot \dots \ot g_n)=
				\sum_{\sigma\in S_n}(-1)^{\varepsilon}\Big(h\circ(sg_{\sigma(1)}\ot \dots \ot sg_{\sigma(n)})\\
				&-(-1)^{\big(|g_{\sigma(q)}|+1\big)\big(|h|+1+\sum\limits_{k=1}^{q-1}(|g_{\sigma(k)}|+1)\big)}g_{\sigma(q)}\bar{\circ}(sh\circ(sg_{\sigma(1)}\ot \dots \ot sg_{\sigma(q-1)}\ot \id_{sV}\ot\ot g_{\sigma(q+1)}\ot \dots \ot sg_{\sigma(n)}))\Big).
		\end{align*}}
		
		Where $q$ is the integer such that $g_{\sigma(q+1)},\dots,g_{\sigma(n)}\in \Hom(k,V)$ and $g_{\sigma(q)}\in \Hom(sV,V)\oplus\calc_{AvO}(V)^{\geqslant 2}_l$.
	\end{itemize}
	
	\medskip
	
	\item[(V)]
	\begin{itemize}
		\item [(i)] If $g_1,\dots,g_n\in \Hom(k,V)\oplus\Hom(sV,V)=\calc_{AvO}(V)_l\cap\calc_{AvO}(V)_r$  with at most one $g_i\in \Hom(sV,V)$, then the definitions of $l_{n+1}^r$, $l_{n+1}^l$ coincide. Then we  define $$l_{n+1}(sh,g_1,\dots,g_n):=l_{n+1}^r(sh,g_1,\dots,g_n)=l_{n+1}^l(sh,g_1,\dots,g_n).$$
		
		\smallskip
		
		\item[(ii)] If $g_1,\dots,g_n\in \Hom(k,V)\oplus\Hom(sV,V)=\calc_{AvO}(V)_l\cap\calc_{AvO}(V)_r$  with at least two $g_i\in \Hom(sV,V)$, then $l_{n+1}^l(sh,g_1,\dots,g_n)\in \calc_{AvO}(V)_l^{\geqslant2}$ and $l_{n+1}^r(sh,g_1,\dots,g_n)\in \calc_{AvO}(V)_r^{\geqslant2}$. Then we  define $$l_{n+1}(sh,g_1,\dots,g_n)=(l_{n+1}^r(sh,g_1,\dots,g_n),l_{n+1}^l(sh,g_1,\dots,g_n))\in {\calc_{AvO}}(V)^{\geqslant 2}_r\oplus {\calc_{AvO}}(V)^{\geqslant 2}_l.$$
		
		\smallskip
		
		\item[(iii)]For $g_1,\dots,g_n\in \calc_{AvO}(V)_r$ with some $g_i\in \calc_{AvO}(V)_r^{\geqslant 2}$ , define
		$$l_{n+1}(sh,g_1,\dots,g_n):=l_{n+1}^r(sh,g_1,\dots,g_n).$$
		
		For $g_1,\dots,g_n\in \calc_{AvO}(V)_l$ with some $g_i\in\calc_{AvO}(V)_l^{\geqslant 2} $, define
		$$l_{n+1}(sh,g_1,\dots,g_n):=l_{n+1}^r(sh,g_1,\dots,g_n).$$
	\end{itemize}

	\medskip
	
	\item[(VI)] At last we define 	
	\[l_{n+1}(g_1\ot \dots \ot g_i\ot sh\ot g_{i+1}\ot \dots \ot g_n)=(-1)^{(|h|+1)(\sum\limits_{k=1}^i|g_k|)+i}l_{n+1}(sh\ot g_1\ot \dots \ot g_n).\] And $l_n$ vanishes elsewhere.
\end{enumerate}

\medskip

\begin{theorem}{\label{averaging-L-infty}}
	Let $V$ be a graded space. Then ${\calc_{AvA}}(V)$ endowed with operations $\{l_n\}_{n\geqslant 1}$ defined above forms a $L_\infty$-algebra.
\end{theorem}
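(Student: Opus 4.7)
The plan is to verify the two defining conditions of Definition \ref{def-L-infty}. Condition (i), graded skew-symmetry of each $l_n$, is essentially built into the construction: in rules (III) and (IV) the operations are defined as explicit sums $\sum_{\sigma \in S_n}$ weighted by Koszul signs $\chi(\sigma; g_1,\dots,g_n)$, while rules (I), (II), (V), (VI) inherit symmetry either from the Gerstenhaber bracket or from being restrictions of the symmetric operations. So the substantive content lies in condition (ii), the generalized Jacobi identities.

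I would organize the verification through the equivalent coderivation formulation. Setting $W = s\calc_{AvA}(V)$, the operations $\{l_n\}$ determine maps $d_n = (-1)^{n(n-1)/2}\, s \circ l_n \circ (s^{-1})^{\ot n}$ whose sum extends to a unique coderivation $d$ of degree $+1$ on the cofree cocommutative coalgebra $S^\bullet(W)$. Condition (ii) is then equivalent to $d \circ d = 0$, equivalently to identity \eqref{coderivation}, which one only needs to check after corestriction to $W$. This reduces the entire problem to verifying one quadratic identity for a family of partial operations, and the Gerstenhaber bracket piece on $\calc_A(V)$ already settles the identity whenever all inputs lie in $\calc_A(V)$, since $(\calc_A(V), [-,-]_G)$ is the classical Hochschild DGLA.

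For the mixed brackets, I would exploit the fact that $l_{n+1}^r(sh, g_1, \dots, g_n)$ is structurally an antisymmetrized associator: a difference of terms of the form $h \circ (sg_{\sigma(1)} \ot \cdots \ot sg_{\sigma(n)})$ and $g_{\sigma(p)} \bar\circ (sh \circ (\cdots \ot \id_{sV} \ot \cdots))$, summed over permutations. Identities of this shape come from a derived-bracket construction on the Hochschild complex of the free averaging algebra; more concretely, the generic case in which every $g_i$ lies in $\Hom(sV, V) \oplus \calc_{AvO}(V)^{\geqslant 2}$ (so that every $\id_{sV}$ slot is legitimately available) reduces to the pre-Lie identity for $\bar\circ$ together with the DGLA axioms for $[-,-]_G$. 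The degenerate formulas in (III)(ii) and (IV)(ii) are then recovered by inserting $0$-ary cochains from $\Hom(k, V)$ into the $\id_{sV}$ slot, and their Jacobi relations follow from the generic identity by the same specialization procedure.

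The main obstacle is the matching condition (V)(i), asserting that $l_{n+1}^r$ and $l_{n+1}^l$ coincide whenever at most one $g_i$ lies in $\Hom(sV, V)$, so that the piecewise definition actually descends to a single well-defined $l_{n+1}$. This coincidence is the cochain-level shadow of the compatibility of the bimodule structures $(\vdash, \dashv)$ and $(\rhd, \lhd)$ from Proposition \ref{newbimodule}, and ultimately of the averaging axiom $A(x)A(y) = A(xA(y)) = A(A(x)y)$. I would verify it by comparing the two formulas term by term and collapsing the difference using this identity repeatedly. Once (V)(i) is settled, the assembled $l_{n+1}$ in (V)(ii)-(iii) combines into a coderivation on the direct sum $\calc_{AvO}(V)_r^{\geqslant 2} \oplus \calc_{AvO}(V)_l^{\geqslant 2}$, and the Jacobi identities for these combined operations follow from those already established for the $r$- and $l$-versions separately, completing the proof.
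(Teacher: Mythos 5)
There is a genuine gap: the proposal names the right tools but does not carry out the step that constitutes essentially the whole content of the paper's proof. After the (correct) observations that skew-symmetry is built into the definitions and that the Jacobi identity with all inputs in $\calc_{A}(V)$ is just the Gerstenhaber DGLA identity, the remaining — and only hard — case is when exactly two inputs $sh_1\in\Hom((sV)^{\ot n_1},sV)$, $sh_2\in\Hom((sV)^{\ot n_2},sV)$ lie in $\calc_A(V)$ and the other $n_1+n_2-1$ inputs lie in $\calc_{AvO}(V)_r$ (or $_l$). Here the generalized Jacobi identity receives contributions from exactly three values of $i$ (namely $i=2$, $i=n_1+1$, $i=n_2+1$), and one must expand each using the pre-Jacobi identity for $\bar\circ$, re-index the resulting sums over shuffles $\sigma\in S(n_2,n_1-1)$ and inner permutations via the decomposition $\pi\leftrightarrow(\delta,\sigma,\tau)$, and check that every term occurs exactly twice with opposite Koszul signs. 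Your proposal replaces this entire computation with the assertion that the identities ``come from a derived-bracket construction on the Hochschild complex of the free averaging algebra'' and ``reduce to the pre-Lie identity for $\bar\circ$ together with the DGLA axioms.'' Neither claim is substantiated: no derived-bracket data (graded Lie algebra, abelian subalgebra, projection) is exhibited, and the pre-Lie identity alone does not produce the cancellation — the sign bookkeeping across the three cases and the shuffle re-indexing are precisely where the proof can fail and must be checked.

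A secondary but conceptual error: you locate ``the main obstacle'' in the coincidence (V)(i) of $l_{n+1}^r$ and $l_{n+1}^l$ and attribute it to the averaging axiom $A(x)A(y)=A(xA(y))=A(A(x)y)$. But $\calc_{AvA}(V)$ is constructed for a bare graded space $V$; no averaging operator exists at this stage (an averaging structure appears only later as a Maurer--Cartan element). The coincidence in (V)(i) is purely combinatorial: when at most one $g_i$ lies outside $\Hom(k,V)$, the insertion positions $p$ and $q$ in (III)(ii) and (IV)(ii) necessarily agree, so the two formulas are literally identical. Misplacing the difficulty there, while leaving the actual cancellation argument as an appeal to analogy, means the proposal does not establish the theorem.
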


Now, let's realize averaging algebra structures as Maurer-Cartan elements in this
$L_\infty$-algebra.

\begin{theorem}
	Let $R$ be a vector space. Then an averaging structure on $R$
	is equivalent to a Maurer-Cartan elements in ${\calc_{AvA}}(R)$.
\end{theorem}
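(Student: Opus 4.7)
The plan is to explicitly solve the Maurer-Cartan equation for a degree $-1$ element of $\calc_{AvA}(R)$ and show that the resulting conditions are exactly the associativity of the product together with the two averaging identities.

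First I would identify the degree $-1$ part of $\calc_{AvA}(R)$. Since $R$ is concentrated in degree $0$, the shift $sR$ lives in degree $1$, and a direct degree count through the five summands of $\calc_{AvA}(R)$ shows that $\Hom(k,R)$ and $\calc_{AvO}(R)_{r/l}^{\geqslant 2}$ contribute nothing in degree $-1$; what remains is $\Hom((sR)^{\ot 2},sR)_{-1}\oplus \Hom(sR,R)_{-1}$, canonically in bijection with pairs $(\mu,A)$ where $\mu:R\ot R\to R$ is a bilinear map and $A:R\to R$ is a linear map. Hence every $\alpha\in \calc_{AvA}(R)_{-1}$ has the form $\alpha=s\mu+A$.

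Next I would expand $\sum_{n\geqslant 1}\tfrac{(-1)^{n(n-1)/2}}{n!}l_n(\alpha^{\ot n})=0$. Rule (I) immediately gives $l_1(\alpha)=0$, since $\alpha$ has no $\Hom(k,sR)$ component. Rules (III)--(VI) allow at most one argument from $\calc_A(R)$, and rule (II) demands both arguments in $\calc_A(R)$; moreover, each $l_n$ in rules (III)/(IV) is defined only when the arity of the $\calc_A(R)$-input equals the number of $\calc_{AvO}(R)$-inputs. Since $s\mu$ has arity $2$ and $A$ has arity $1$, the only terms that survive from $l_n(\alpha^{\ot n})$ are $l_2(s\mu,s\mu)$ via rule (II) and $l_3(s\mu,A,A)$ via rules (III)(i) and (IV)(i); all $l_n$ with $n\geqslant 4$ vanish on $\alpha^{\ot n}$. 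Thus the MC equation decomposes into three independent equations living in $\calc_A(R)$, $\calc_{AvO}(R)_r^{\geqslant 2}$ and $\calc_{AvO}(R)_l^{\geqslant 2}$ respectively, the last two produced by rule (V)(ii) applied to the two $A$'s in $\Hom(sR,R)$.

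I would then decode each piece. The $\calc_A(R)$-component is a scalar multiple of $[s\mu,s\mu]_G=2\,s\mu\bar\circ s\mu$, so it vanishes if and only if $\mu$ is associative, by the standard Gerstenhaber argument. Evaluating rule (III)(i) with $n=2$ and $g_1=g_2=A$, the $S_2$-sum produces two equal terms (by symmetry in the $A$'s), and the result is a nonzero multiple of $(x,y)\mapsto \mu(A(x),A(y))-A(\mu(x,A(y)))$; rule (IV)(i) analogously yields $(x,y)\mapsto \mu(A(x),A(y))-A(\mu(A(x),y))$ in $\calc_{AvO}(R)_l^{\geqslant 2}$. The simultaneous vanishing of these three cochains is exactly the averaging algebra axiom \eqref{avg}, and conversely each averaging algebra $(R,\mu,A)$ yields such an MC element via $\alpha=s\mu+A$.

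The main obstacle will be the careful sign bookkeeping: verifying that the Koszul signs $\chi(\sigma;A,A)$, the shift signs $(-1)^{n(|h|+1)+\sum_k\sum_j|g_{\sigma(j)}|}$ in rules (III)(i) and (IV)(i), and the global prefactor $\tfrac{(-1)^{n(n-1)/2}}{n!}$ combine to produce precisely these three equations, with no spurious coupling between the $r$-component and the $l$-component. Once this sign verification is in place, the equivalence of the two structures follows by matching $\alpha\leftrightarrow(\mu,A)$ directly.
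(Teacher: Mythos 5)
Your proposal is correct and follows essentially the same route as the paper: identify the degree $-1$ part of $\calc_{AvA}(R)$ as $\Hom((sR)^{\ot 2},sR)\oplus\Hom(sR,R)$, observe that only $l_2(s\mu\ot s\mu)=[s\mu,s\mu]_G$ and the two components of $l_3(s\mu\ot A\ot A)$ survive in the Maurer--Cartan equation, and decode these as associativity plus the two averaging identities. The paper carries out exactly this computation (collecting the three equal permuted terms of $l_3$ just as you indicate), so no further comparison is needed.
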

\begin{proof}
	Consider $R$ as a graded vector space concentrated in degree 0.
	Then the degree -1 part of $\calc_{AvA}(R)$ is $${\calc_{AvA}}(R)_{-1}=\Hom((sV)^{\ot 2},
	sV)\bigoplus \Hom(sV,V).$$ Assume that $\alpha=(m, \tau)$ be a Maurer-Cartan element in $\calc_{AvA}(R)$, that is, $\alpha$ satisfy the equation:
	\begin{eqnarray}
		\label{mc2}\sum_{k=1}^\infty\frac{1}{k!}(-1)^{\frac{k(k-1)}{2}}l_k(\alpha^{\ot k})=0.
	\end{eqnarray}

By the definietion of $\{l_n\}_{n\geqslant 1}$,
\begin{align*}
&\frac{1}{2!}(-1)l_2(\alpha\ot \alpha)=-\frac{1}{2}[m,m]_G,\\
&\frac{1}{3!}(-1)l_3(\alpha^{\ot 3})\\
&=-\frac{1}{3!}\Big(l_3^r(m\ot \tau\ot \tau)+l^r_3(\tau\ot m\ot \tau)+l_3^r(\tau\ot \tau\ot m), l_3^l(m\ot \tau\ot \tau)+l^l_3(\tau\ot m\ot \tau)+l_3^l(\tau\ot \tau\ot m)\Big)\\
&=-\frac{1}{3!}\Big(3l_3^r(m\ot \tau\ot \tau),3l_3^l(m\ot \tau\ot \tau) \Big)\\
&=-\Big(s^{-1}m\circ(s\tau\ot s\tau)-\tau{\circ}(sm\circ(\id \ot s\tau)),s^{-1}m\circ(s\tau\ot s\tau)-\tau{\circ}(sm\circ(s\tau \ot \id))\Big)
\end{align*}
 Then Equation $(\ref{mc2})$ implies that
	\begin{eqnarray}
		\label{ass2} [m,m]_{G}&=&0\\
		\label{avr}s^{-1}m\circ((s\tau)^{\ot 2})&-&\tau\bar{\circ}(sm\circ(\id \ot s\tau))=0\\
		\label{avl}s^{-1}m\circ((s\tau)^{\ot 2})&-&\tau\bar{\circ}(sm\circ(s\tau\ot \id))=0.
	\end{eqnarray}
	Define $\mu=s^{-1}\circ m\circ s^{\ot 2}: A^{\ot 2}\rightarrow A, A=\tau\circ s: A\rightarrow A$. Then
	Equation (\ref{ass2}) is equivalent to $$\mu\circ(\id\ot \mu)=\mu\circ(\mu\ot \id).$$ That is, $\mu$ is associative.
	 Equations (\ref{avr})  (\ref{avl}) imply
	$$\mu\circ(A\ot A)=A\circ(\mu\circ(\id\ot A)),\mu\circ(A\ot A)=A\circ(\mu\circ(A\ot \id)),$$
	which means that $A$ is an averaging operator on associative algebra $(R,\mu)$.
	Conversely, let $(R,\mu,A)$ be an averaging algebra. Define $m: (sR)^{\ot 2}\rightarrow sR$ , $\tau: sR\rightarrow R$ to be $m(sa\ot sb)=s\mu(a\ot b)$, $\tau(sa)=A(a)$. Then $(m,\tau)$ is a Maurer-Cartan element in $\calc_{AvA}(R)$.
\end{proof}

\bigskip

\begin{prop}Let $(R,\mu,A)$ be an averaging algebra and $\alpha=(m,\tau)$ be the corresponding Maurer-Cartan element in $\calc_{AvA}(R)$.  Then the underground complex of $L_\infty$-algebra $({\calc_{AvA}}(R),\{l^\alpha\})$ is exactly the cochain complex $sC^\bullet_{AvA}(R)$ defined in  Section \ref{sec:cohomologyava}.
\end{prop}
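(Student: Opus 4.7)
The plan is to compute the twisted first bracket
$$l_1^\alpha(x)=\sum_{i\geq 0}\frac{1}{i!}(-1)^{i+\frac{i(i+1)}{2}}l_{i+1}(\alpha^{\ot i}\ot x)$$
on each homogeneous summand of $\calc_{\AvA}(R)$ and match the result, piece by piece, with the totalized differential of $sC^\bullet_{\AvA}(R)$, which by Section~\ref{sec:cohomologyava} is built from $\delta$ on $C^\bullet_{\Alg}(R)$, $\partial_r$ on $C^\bullet_r(R)$, $\partial_l$ on $C^\bullet_l(R)$, and the connecting term $(-1)^n\Phi^n$. The structural fact I exploit throughout is that, by construction of $\{l_n\}_{n\geq 1}$ in items (III)--(VI), each $l_{n+1}$ with $n\geq 1$ vanishes unless exactly one input lies in $\calc_A(V)$ with arity matching $n$, while the remaining $n$ inputs sit in $\calc_{\AvO}(V)_r$ or $\calc_{\AvO}(V)_l$; the only exceptions are $l_2$ on two $\calc_A$-inputs, which is the Gerstenhaber bracket, and $l_1$ on $\Hom(\bk,sV)$.

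Write $\alpha=(m,\tau)$ with $m\in\calc_A(V)$ of arity $2$ and $\tau\in\Hom(sV,V)\subset\calc_{\AvO}(V)$. Expanding $\alpha^{\ot i}$ in $l_{i+1}(\alpha^{\ot i}\ot x)$, most monomials in $m,\tau$ are killed by the above restriction, leaving only finitely many nonzero terms for each fixed $x$. For $x=sf$ in the arity-$k$ part of $\calc_A(V)$, two contributions survive: $l_2(m,sf)=[m,sf]_G$ recovers the Hochschild coboundary $s\delta(f)$, and $l_{k+1}(\tau^{\ot k}\ot sf)$ contributes through items~(III) and (IV), where the sum over $S_k$ becomes $k!$ identical copies (the $\tau$'s being indistinguishable) and cancels the twist prefactor $\frac{1}{k!}$ to produce the pair $\big(\Phi_r^k(f),\Phi_l^k(f)\big)$. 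For $x=g$ in the arity-$k$ part of $\calc_{\AvO}(V)_r$ with $k\geq 2$, the only compatible distribution of $(m,\tau)$'s among the inputs is exactly one $m$ together with $k-1$ copies of $\tau$; unfolding item~(III) reproduces precisely the defining formula of $\partial_r(g)$. The mirror computation via~(IV) yields $\partial_l(g)$ for $g\in\calc_{\AvO}(V)_l$. The low-arity cases ($k=0,1$) are verified directly from the formulas for $l_1$ and for $l_2^r=l_2^l$, producing the maps $\partial^0$ and $\Phi^1$ featured in the complex of Subsection~\ref{sec:cohomologyao}.

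The main obstacle will be the sign bookkeeping, where three layers interact: the Maurer--Cartan twist signs $(-1)^{i+i(i+1)/2}$; the Koszul symmetry signs $\chi(\sigma,\ldots)$ together with the internal suspension/desuspension signs hard-coded into (III)--(VI); and the totalization signs of the bicomplex $C^\bullet_{\AvA}(R)$, which attach $(-1)^n$ to $\Phi^n$. A useful intermediate checkpoint is that the explicit expansion of $l_3(m,\tau,g)$ for $g$ of arity $k\geq 2$, after fully unfolding the internal signs of $l_3^r$, agrees term by term with the expression
$$A(x_1)f(x_2\ot\cdots)-A_M\big(x_1f(x_2\ot\cdots)\big)+\sum_{i=1}^n(-1)^if(\cdots\ot x_iA(x_{i+1})\ot\cdots)+(-1)^{n+1}f(\cdots)A(x_{n+1})$$
defining $\partial_r$. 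Once the signs are matched in low arities, the graded symmetry axiom $l_n(x_{\sigma(1)}\ot\cdots\ot x_{\sigma(n)})=\chi(\sigma,x_1,\ldots,x_n)l_n(x_1\ot\cdots\ot x_n)$ propagates the identification to arbitrary arities, so that $l_1^\alpha$ agrees with the suspended totalization differential, proving the proposition.
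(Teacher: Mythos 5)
Your overall strategy coincides with the paper's: expand $l_1^\alpha$ summand by summand, observe that only finitely many terms of the Maurer--Cartan twist survive, and match them with $\delta$, $(-1)^n\Phi^n$, $\partial_r$, $\partial_l$ and $\partial_0$. Your treatment of the $\calc_{A}(R)$ summand is correct and is exactly what the paper does: $l_2(m\ot sf)=[m,sf]_G$ recovers the Hochschild differential, and $l_{n+1}(\tau^{\ot n}\ot sf)$, with the $n!$ coming from the symmetrization in item (III)/(IV) cancelling the $1/n!$ of the twist, yields $(-1)^n\bigl(\Phi_r^n(f),\Phi_l^n(f)\bigr)$.

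The gap is in the $\calc_{\AvO}$ summand. For $g$ of arity $k\geq 2$ you assert that the surviving distribution is ``exactly one $m$ together with $k-1$ copies of $\tau$''. That is wrong except when $k=2$: the bracket $l_{n+1}(sh\ot g_1\ot\cdots\ot g_n)$ is nonzero only when the arity of the $\calc_A$-input $sh$ equals the number $n$ of $\calc_{\AvO}$-inputs, and since $m$ has arity $2$ the only surviving contribution to $l_1^\alpha(g)$ is $-\tfrac{1}{2!}\bigl(l_3(m\ot\tau\ot g)+l_3(\tau\ot m\ot g)\bigr)=-l_3(m\ot\tau\ot g)$, containing exactly one $\tau$ regardless of $k$; the term $l_{k+1}(m\ot\tau^{\ot (k-1)}\ot g)$ you propose vanishes outright for $k\geq 3$, so followed literally your computation would give $l_1^\alpha(g)=0$ there and never produce $\partial_r$. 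The sum over the $k$ internal slots appearing in $\partial_r(g)$ is generated not by many copies of $\tau$ but by the single insertion $g\bar\circ\bigl(sm\circ(\id\ot s\tau)\bigr)$ inside $l_3^r$. (Your ``checkpoint'' correctly names $l_3(m,\tau,g)$, so the slip is recoverable, but as written the main argument computes the wrong bracket.) Relatedly, the closing appeal to the graded symmetry axiom to ``propagate the identification to arbitrary arities'' does not work: that axiom only permutes the arguments of a fixed $l_n$ and carries no information across arities of $f$ or $g$; the identification with $(-1)^n\Phi^n$ and with $\partial_r,\partial_l,\partial_0$ must be read off directly from the general formulas in items (III)--(IV), which is what the paper's proof does.
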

\begin{proof}Identify $C^\bullet(R)$ with ${\calc_{A}}(R)$ and identify $C_{AvO}^\bullet$ with $\Hom(k,R)\oplus \Hom(sR,R)\oplus {\calc_{AvO}}(R)_r^{\geqslant 2}\oplus {\calc_{AvO}}(R)_l^{\geqslant 2}$. Let $sf\in \Hom((sR)^{\ot n},sR)$. Then $$l_1^\alpha(sf)=\sum_{k=0}^\infty\frac{1}{k!}(-1)^{\frac{k(k+1)}{2}+k}l_{k+1}(\alpha^{\ot k}\ot sf)=l_2(m\ot sf)+\frac{1}{n!}(-1)^{\frac{n(n-1)}{2}}l_{n+1}(\tau^{\ot n}\ot sf),$$
	where $l_2(m\ot sf)=[m,sf]_G$, it's just the differential $\delta$ of Hochschild cochain complex.  We have \begin{align*}\frac{1}{n!}(-1)^{\frac{n(n-1)}{2}}l_{n+1}(\tau^{\ot n}\ot sf)=&\frac{1}{n!}(-1)^{\frac{n(n-1)}{2}+n}(l_{n+2}^r(sf\ot \tau^{\ot n}),l_{n+1}^{l}(sf\ot \tau^{\ot n})).
	\end{align*}
	Then by definition, 	$$\frac{1}{n!}(-1)^{\frac{n(n-1)}{2}+n}l_{n+2}^r(sf\ot \tau^{\ot n})=(-1)^n(f\circ((s\tau)^{\ot n})-\tau\circ(sf\circ(\id\ot (s\tau)^{\ot n-1}))),$$
	which is exactly the same as $(-1)^n\Phi^n_r$ defined in Section \ref{sec:cohomologyav}. Similarly, we can see $l_{n+1}^l(sf\ot \tau^{\ot n})$ is just $(-1)^n\Phi^n_l$. In particular, for $sf\in \Hom(sR,sR)$, $l_2(sf\ot\tau)=l_2^r(sf\ot\tau)=l_2^l(sf\ot \tau)=-(f\circ s\tau-\tau\circ (sf))$, it is the same as $-\Phi^1$.
	
	\smallskip
	
	For $g\in {\calc_{AvO}}(R)^r$, we have
	\begin{align*}\sum_{k=0}^\infty\frac{1}{k!}(-1)^{\frac{k(k+1)}{2}+k}l_{k+1}(\alpha^{\ot k}\ot g)=&-\frac{1}{2}\big(l_3(m\ot \tau\ot g)+l_3(\tau\ot m\ot g)\big)\\
		=&	-l_3(m\ot \tau\ot g)\\
		=&-\Big(\big(-s^{-1}m_2\circ(s\tau\ot sg)+\tau\circ(m\circ(\id\ot sg))\big)\\
		&-\big(s^{-1}m\circ(sg\ot s\tau)-g\bar\circ(m\circ(\id\ot s\tau ))\big)\Big)\\
		=&s^{-1}m_2\circ(s\tau\ot sg)-\tau\circ(m\circ(\id\ot sg))\\
		&-g\bar\circ(m\circ(\id\ot \tau))+s^{-1}m\circ(sg\ot s\tau),
	\end{align*}
	and it is the same as $\partial_r$ defined in Section \ref{sec:cohomologyao}. For $g\in {\calc_{AvO}}(R)^l$, it is all the same. Especially, when $g\in \Hom(k,R)$, then by definition, we have
	
	\begin{align*}
		\sum_{k=0}^\infty\frac{1}{k!}(-1)^{\frac{k(k+1)}{2}+k}l_{k+1}(\alpha^{\ot k}\ot g)=&-\frac{1}{2}\big(l_3(m\ot \tau\ot g)+l_3(\tau\ot m\ot g)\big)\\
		=&	-l_3(m\ot \tau\ot g)\\
		=&-\Big(-s^{-1}m\circ(s\tau\circ sg)+\tau\circ(m\circ(\id\ot sg))\\
		&-(s^{-1}m\circ(sg\ot s\tau)-\tau\circ(m\circ(sg\ot \id)))\Big)
	\end{align*}
	And it is the same as the operator $\partial_0$ defined  in Section \ref{sec:cohomologyao}.
	
\end{proof}

\begin{prop}
	Let $V$ be a graded space. Then a Maurer-Cartan element $\alpha$ in ${\calc_{AvA}}(V)$ will induce a $L_\infty$-algebra structure on ${\calc_{AvO}}(V)$. In particular, for an averaging algebra $(R,\mu,A)$, the cochain complex $C_{AvO}(R)$ is a differential graded Lie-algebra.
\end{prop}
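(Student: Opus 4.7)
The plan is to derive both assertions from the general twisting procedure for $L_\infty$-algebras: given the Maurer-Cartan element $\alpha$, the twisted operators $\{l_n^\alpha\}$ automatically satisfy the $L_\infty$-identities on the whole of $\calc_{AvA}(V)$, so the real work is to show that $\calc_{AvO}(V)$ is stable under them. First I would write $\alpha=m+\tau$ according to the direct sum $\calc_{AvA}(V)=\calc_A(V)\oplus \calc_{AvO}(V)$ and, for $x_1,\dots,x_n\in \calc_{AvO}(V)$, expand
\[
l_n^\alpha(x_1,\dots,x_n)=\sum_{i\geqslant 0}\frac{(-1)^{\varepsilon_i}}{i!}\,l_{n+i}\big(\alpha^{\otimes i}\otimes x_1\otimes\cdots\otimes x_n\big)
\]
into a sum over the number of $m$-factors coming out of $\alpha^{\otimes i}$.

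Next I would run through the definition of $\{l_n\}$ in items (I)--(VI) case by case to see which of these terms can be non-zero. Terms with no copy of $m$ would require $l_{n+i}$ of inputs lying entirely in $\calc_{AvO}(V)$, which vanishes because each non-trivial clause of (I)--(VI) demands at least one $sh\in \calc_A(V)$ input. Terms with two or more copies of $m$ vanish by (III)--(V), which for brackets of arity $\geqslant 3$ allow exactly one $\calc_A(V)$-input (the $l_2$-case with two $\calc_A(V)$-inputs never arises here because $n\geqslant 1$ forces at least one $\calc_{AvO}(V)$-input). The surviving terms, with exactly one $m$ and $i-1$ copies of $\tau$, land in $\calc_{AvO}(V)$ by the target clauses of (III)--(V). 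Consequently $l_n^\alpha$ restricts to $\calc_{AvO}(V)$, and the $L_\infty$-identities for this restricted family follow by restricting the identities for $\{l_n^\alpha\}$ on $\calc_{AvA}(V)$.

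For the ``in particular'' statement, I would specialize to the case $V=R$ concentrated in degree $0$ and $\alpha=(m,\tau)$ with $m$ of arity $2$ and $\tau\in\Hom(sR,R)$. The arity-matching condition in (III)--(V) says that $l_{n+i}(m\otimes \tau^{\otimes i-1}\otimes x_1\otimes\cdots\otimes x_n)$ can be non-zero only when the number of non-$m$ inputs equals the arity of $m$, i.e.\ $(i-1)+n=2$. This leaves exactly two solutions: $(n,i)=(1,2)$ which reproduces the differential, and $(n,i)=(2,1)$ which produces a binary bracket; all $l_n^\alpha$ with $n\geqslant 3$ vanish identically on $\calc_{AvO}(R)$. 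Hence $(\calc_{AvO}(R),l_1^\alpha,l_2^\alpha)$ is a DG Lie algebra, and by the identification established in the preceding proposition it coincides with $C^\bullet_{AvO}(R)$.

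The main obstacle is the bookkeeping in the case analysis: because $\calc_{AvO}(V)$ splits further into $\Hom(\bk,V)\oplus \Hom(sV,V)\oplus\calc_{AvO}(V)_r^{\geqslant 2}\oplus\calc_{AvO}(V)_l^{\geqslant 2}$ and the rules (III)--(V) handle these summands separately (distinguishing whether inputs come from $\Hom(\bk,V)$ or from higher-arity cochains, and tracking the $r$/$l$ components), verifying that every contributing term really lands in $\calc_{AvO}(V)$ and not, say, back in $\calc_A(V)$, is where the careful work lies. Once this stability is established, the general twisting theorem for $L_\infty$-algebras delivers the remaining coherence identities essentially for free.
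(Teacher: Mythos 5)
Your proposal is correct and follows essentially the same route as the paper: restrict the twisted operations $\{l_n^\alpha\}$ to $\calc_{AvO}(V)$, observe that the defining clauses (I)--(VI) force closure of this subspace under the twisted brackets, and then use the arity-$2$ nature of $m$ to kill all $l_n^\alpha$ with $n\geqslant 3$ in the averaging-algebra case. The paper states these two observations without justification, whereas you supply the case analysis (exactly one $m$-factor survives, and the arity count $(i-1)+n=2$) that makes them precise; this is a welcome elaboration rather than a different argument.
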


\begin{proof}Given a Maurer-Cartan element $\alpha$, $\alpha$ will induce $L_\infty$-algebra structure $\{l_n^\alpha\}_{n\geqslant 1}$ on ${\calc_{AvA}}(V)$. And notice that all operations $l_n^\alpha$ can be restricted to ${\calc_{AvO}}(V)$.
	Thus ${\calc_{AvO}}(V)$ forms a $L_\infty$-subalgebra of $({\calc_{AvA}}(V),\{l_n^\alpha\}_{n\geqslant 1})$.
	
	Let  $(R,\mu,A)$ be an averaging algebra and $(m,\tau)$ be the corresponding Maurer-Cartan element in $\calc_{AvA}(R)$. 	Identify $C_{AvO}(R)$ with ${\calc_{AvO}}(R)$.  Since $m=-s\circ\mu\circ (s^{-1})^{\ot 2}\in \Hom((sR)^{\ot 2},sR)$ in $\calc_A(R)$, the restriction of $l_n^\alpha$ on ${\calc_{AvO}}(R)$ is zero for $n\geqslant 3$. Thus $C_{AvO}(R)$ is just a differential graded Lie-algebra.
\end{proof}

\section{Homotopy averaging algebras}

In this subsection, we'll define homotopy averaging algebras.

Recall that an $A_\infty$-algebra structure is equivalent to a Maurer-Cartan element in the graded Lie-algebra $\overline{{\calc_{A}}}(V):=(\Hom(\overline{T^c(sV)},sV),[-,-]_G)$ where $\overline{T^c(sV)}=\bigoplus\limits_{n=1}^{\infty}(sV)^{\ot n}$.
 We can define homotopy averaging algebra structure in similar way.  For a graded vector space $V$, consider the following subspace of ${\calc_{AvA}}(V)$:
\[\overline{{\calc_{AvA}}}(V):=\overline{{\calc_{A}}}(V)\bigoplus\Hom(sV,V)\bigoplus{\calc_{AvO}}(V)_r^{\geqslant 2}\bigoplus {\calc_{AvO}}(V)_l^{\geqslant 2}.\]

Obviously, $\overline{{\calc_{AvA}}}(V)$ is  a $L_\infty$-subalgebra of $\overline{{\calc_{AvA}}}(V)$ with the operations $\{l_n\}_{n\geqslant 1}$ restricted on $\overline{{\calc_{AvA}}}(V)$.

Then we have the following definition:
\begin{defn}
	Let $V$ be a graded space. A homotopy averaging algebra( $Av_\infty$) structure on $V$ is a Maurer-Cartan element in the $L_\infty$-algebra $\overline{{\calc_{AvA}}}(V)$.
\end{defn}
Let's describe this structure explicitly.

A Maurer-Cartan element $\alpha$ in $\overline{{\calc_{AvA}}}(V)$ corresponds to a family of operators:$$\{b_n\}_{n\geqslant 1}\cup\{c\}\cup\{c_n^r\}_{n\geqslant 2}\cup\{c_n^l\}_{n\geqslant 2}$$ where $b_n:(sV)^{\ot n}\rightarrow sV$, $c: sV\rightarrow V$ belongs $\overline{\calc_{AvA}}(V)$, $\Hom(sV,V)$ respectively,  and  $c_n^r: (sV)^{\ot n}\rightarrow V, c_n^l:(sV)^{\ot n}\rightarrow V$ belongs to ${\calc_{AvO}}(V)(V)_r^{\geqslant 2}$, ${\calc_{AvO}}(V)^{\geqslant 2}_l$ respectively.  All these operators are  of degree $-1$. Then $\alpha$ satisfying the Maurer-Cartan equation implies that the family of operators satisfies the following equations:
\begin{align*}
	&\sum_{i+j=n}b_{n-j+1}\circ(\id^{\ot i}\ot b_j\ot \id^{\ot n-i-j})=0,\\
	&\sum_{m=1}^n\sum_{l_1+\dots+l_m=n}\Big(s^{-1}b_m\circ(sc_{l_1}^r\ot \dots \ot sc_{l_m}^r)-c_{l_1}^r\bar\circ(b_m\circ(\id\ot sc_{l_2}^r\ot \dots \ot sc_{l_m}^r))\Big)=0,\\
	&\sum_{m=1}^n\sum_{l_1+\dots+l_m=n}\Big(s^{-1}b_m\circ(sc_{l_1}^l\ot \dots \ot sc_{l_m}^l)-c_{l_m}^l\bar\circ(b_m\circ(sc_{l_1}^l\ot \dots \ot sc_{l_{m-1}}^l\ot \id))\Big)=0.
\end{align*}
for any $n\geqslant 1$, where $c_1^r=c_1^l=c$.

Then we define $m_n=s^{-1}\circ b_n\circ s^{\ot n}: V^{\ot
	n}\rightarrow V$,  $A=c\circ s: V\rightarrow V$ ,
$A_n^r=c_n^r\circ s^{\ot n}: V^{\ot n}\rightarrow V$ and
$A_n^l=c_n^l\circ s^{\ot n}: V^{\ot n}\rightarrow V$ for all
$n\geqslant 1$, where $|m_n|=n-2, |A|=0, |A_n^r|=|A_n^l|=n-1$.

These operators $\{m_n\}_{n\geqslant1}\cup \{A\}\cup\{A_n^r\}_{n\geqslant 2}\cup \{A_n^l\}_{n\geqslant 2}$ satisfies the following identities:
\smallskip
\begin{itemize}
	
	\item[(i)]\begin{align*}
		\sum_{i+j=n}(-1)^{i+jk}m_{n-j+1}\circ(\id^{\ot i}\ot m_j\ot \id^{\ot n-i-j})=0,
	\end{align*}
	
	\item[(ii)]\begin{align*}
		&\sum_{k=1}^n\sum_{l_1+\dots+l_k=n}(-1)^{\varepsilon}\Big\{m_k\circ(A_{l_1}^r\ot \dots \ot A_{l_k}^r)\\
		&-\sum_{p+q+1=l_1}(-1)^{q\cdot\sum\limits_{j=2}^k(l_j-1)+kp+q}A_{l_1}^r\circ(\id^{\ot p}\ot m_k\circ(\id\ot A_{l_2}^r\ot \dots \ot A_{l_k}^r)\ot \id^{\ot q})\Big\}=0,
	\end{align*}
	
	\item[(iii)]\begin{align*}&\sum_{k=1}^n\sum_{l_1+\dots+l_k=n}(-1)^{\varepsilon}\Big\{m_k\circ(A_{l_1}^l\ot \dots \ot A_{l_k}^l)\\
		&-\sum_{p+q+1=l_k}(-1)^{p\cdot\sum\limits_{j=1}^{k-1}(l_j-1)+(l_k-1)\cdot\sum\limits_{j=1}^{k-1}l_j+kp+q}A_{l_k}^l\circ(\id^{\ot p}\ot m_k\circ( A_{l_1}^l\ot \dots \ot A_{l_{k-1}}^l\ot \id)\ot \id^{\ot q})\Big\}=0
	\end{align*}
\end{itemize}
where$A_{1}^l=A_1^r=A$, and  $\varepsilon=\sum\limits_{j=1}^k\frac{l_j(l_j-1)}{2}+\frac{k(k-1)}{2}+\sum\limits_{j=1}^k(l_j-1)(l_1+\dots+l_{j-1})=\frac{n(n-1)}{2}+\frac{k(k-1)}{2}+\sum\limits_{j=1}^k(k-j+1)l_j$.

The equation $\mathrm{(i)}$ is just the definition of $A_\infty$-algebras.

\smallskip

Let's compute the equation $\mathrm{(ii) (iii)}$ for $n=1,2$:
	\begin{align*}
		(1)\  &n=1, -m_1\circ A+ A\circ m_1=0\\
		(2)\  &n=2,
		m_2\circ(A\ot A)-A\circ(m_2\circ(\id\ot A))=-m_1\circ A_{2}^r+A_2^r\circ(m_1\ot\id)+A_2^r\circ(\id\ot m_1),\\
		&\ \ \ \ \ \ \  \ \ \ \ m_2\circ(A\ot A)-A\circ(m_2\circ(A\ot \id))=-m_1\circ A_{2}^l+A_2^l\circ(m_1\ot\id)+A_2^l\circ(\id\ot m_1).
	\end{align*}
The equation $(1)$ above implies that $A$ is compatible with the differential, so operator $A$ can be induced on the homology of the complex $(V,m_1)$. The equation $(2)$ says that $A$ is an averaging operator up to homotopy with respect to $m_2$ on $V$ and the obstructions are $A_2^r$ and $A_2^l$.

So we get another definition of homotopy averaging algebra.
\begin{defn}Let $V$ be a graded space. Assume that $V$ is endowed with a family of operators $\{m_n: V^{\ot n}\rightarrow V\}_{n\geqslant 1}\cup\{A: V\rightarrow V\}\cup\{A_n^r:V^{\ot n}\rightarrow V\}_{n\geqslant 2}\cup\{A_n^l: V^{\ot n}\rightarrow V\}_{n\geqslant 2}$ with $|m_n|=n-2$, $|A|=0$, $|A_n^r|=|A_n^l|=n-1$. If these operators satisfy the equations $\mathrm{(i)\ (ii)\ (iii)}$ above, we call  $V$ a homotopy averaging algebra.
\end{defn}

\section*{Appendix: Proof of Theorem~\ref{averaging-L-infty}}

Firstly, let's display some facts we need to prove Theorem \ref{averaging-L-infty}.

\begin{lemma}{\label{permutation}}Let $n \geqslant 1, 1\leqslant i\leqslant n-1$. Then for any $\pi\in S_n$, there exists a unique triple $(\delta,\sigma,\tau)$ with $\sigma\in S(i,n-i),\delta\in S_i, \tau\in S_{n-i}$ such that $\pi(l)=\sigma\delta(l)$ for $1\leqslant l\leqslant i$, and $\pi(i+m)=\sigma(i+\tau(m))$ for $1\leqslant m\leqslant n-i$.
\end{lemma}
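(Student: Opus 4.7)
The plan is to construct the triple $(\delta,\sigma,\tau)$ from $\pi$ explicitly and then argue uniqueness by showing that $\sigma$ is forced by the image sets of $\pi$ restricted to the two blocks.

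First I would set
$$A := \{\pi(1),\dots,\pi(i)\}, \qquad B := \{\pi(i+1),\dots,\pi(n)\}.$$
Since $\pi\in S_n$, we have $A\sqcup B=\{1,\dots,n\}$, $|A|=i$ and $|B|=n-i$. Write the elements of $A$ in increasing order as $a_1<\cdots<a_i$ and those of $B$ as $b_1<\cdots<b_{n-i}$. Define $\sigma\in S_n$ by $\sigma(l)=a_l$ for $1\leqslant l\leqslant i$ and $\sigma(i+m)=b_m$ for $1\leqslant m\leqslant n-i$. By construction $\sigma$ is strictly increasing on $\{1,\dots,i\}$ and on $\{i+1,\dots,n\}$, hence $\sigma\in S(i,n-i)$.

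Next, since $\sigma$ restricts to a bijection $\{1,\dots,i\}\to A$ and $\pi$ sends $\{1,\dots,i\}$ bijectively onto $A$, the composition $\delta:=\sigma^{-1}\circ\pi|_{\{1,\dots,i\}}$ is a permutation of $\{1,\dots,i\}$, so $\delta\in S_i$. Similarly, the assignment $m\mapsto \sigma^{-1}(\pi(i+m))-i$ defines an element $\tau\in S_{n-i}$. Both identities $\pi(l)=\sigma(\delta(l))$ and $\pi(i+m)=\sigma(i+\tau(m))$ hold by construction, which gives existence.

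For uniqueness, suppose $(\delta',\sigma',\tau')$ is any triple with the required properties. Then
$$\sigma'\bigl(\{1,\dots,i\}\bigr)=\sigma'\bigl(\delta'(\{1,\dots,i\})\bigr)=\pi\bigl(\{1,\dots,i\}\bigr)=A,$$
and analogously $\sigma'(\{i+1,\dots,n\})=B$. Because $\sigma'\in S(i,n-i)$ is strictly increasing on each of the two blocks, its values on those blocks are uniquely determined by the sets $A$ and $B$; thus $\sigma'(l)=a_l$ and $\sigma'(i+m)=b_m$, giving $\sigma'=\sigma$. Then $\delta'(l)=(\sigma')^{-1}(\pi(l))=\sigma^{-1}(\pi(l))=\delta(l)$ and similarly $\tau'=\tau$, proving uniqueness.

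There is no real obstacle here; the whole statement is just the standard bijection $S_n\leftrightarrow S(i,n-i)\times S_i\times S_{n-i}$ underlying the combinatorics of $(i,n-i)$-shuffles. The only thing to watch out for is keeping the two shifts straight (the indices in $\{i+1,\dots,n\}$ versus the indices in $\{1,\dots,n-i\}$ that parametrise $\tau$), which is handled by the $-i$ shift in the definition of $\tau$.
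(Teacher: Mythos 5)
Your proof is correct and complete: the construction of $\sigma$ from the image sets $A$ and $B$, the definition of $\delta$ and $\tau$ via $\sigma^{-1}$, and the uniqueness argument from the monotonicity of a shuffle on each block are all sound. The paper states this lemma without proof (it is the standard bijection $S_n\cong S(i,n-i)\times S_i\times S_{n-i}$), so there is nothing to compare against; your argument supplies exactly the expected justification.
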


Let $V$ be a graded space. The operation $``\bar\circ"$ on $\Hom(T^c(V),V)$ satisfying the Pre-Jacobi identity:

\begin{lemma}
	For any homogeneous elements $f; g_1,\dots, g_m; h_1,\dots,h_n$ in $\Hom(T^c(V),V)$, the following identity holds:
	\begin{eqnarray*}
		&&\Big(f\bar{\circ}(g_1,\dots,g_m)\Big)\bar{\circ}\Big(h_1,\dots,h_n\Big)=\\
		&&\sum\limits_{0\leqslant i_1\leqslant i_2\leqslant \dots \leqslant i_m\leqslant n}(-1)^{\sum\limits_{k=1}^m|g_k|(\sum\limits_{j=1}^{i_k}|h_j|)}f\bar{\circ}\Big(h_1,\dots,h_{i_1},g_1\bar{\circ}(h_{i_1+1},\dots),\dots, h_{i_m}, g_m\bar\circ(h_{i_m+1},\dots)\dots\Big)
	\end{eqnarray*}
\end{lemma}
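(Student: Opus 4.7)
My plan is to prove the identity by unfolding both sides according to the definition of $\bar\circ$ and matching the resulting terms via a sign-preserving bijection. On the left-hand side, the inner composition $f\bar\circ(g_1,\ldots,g_m)$ first expands as a sum over nondecreasing tuples $0\leqslant j_1\leqslant \cdots \leqslant j_m$ of positions in $f$'s argument list, each term being $f$ with $g_1,\ldots,g_m$ grafted into its slots so that exactly $j_k$ of $f$'s original argument positions lie strictly before $g_k$, with the attendant Koszul signs. Applying the outer $\bar\circ(h_1,\ldots,h_n)$ then expands each such term into a sum over all ways of distributing $h_1,\ldots,h_n$ among the currently available argument slots; each $h_l$ ends up either as a direct argument of $f$ or as an argument of one of the already-inserted $g_k$. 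On the right-hand side, the tuple $0\leqslant i_1\leqslant\cdots\leqslant i_m\leqslant n$ encodes which $h_l$'s become direct arguments of $f$ and which are fed into $g_k$, and the inner $g_k\bar\circ(h_{i_k+1},\ldots)$ itself re-expands to a sum over insertions into the slots of $g_k$.

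The bijection I would set up sends each LHS configuration (specified by the insertion positions of the $g_k$ in $f$ together with the insertion positions of the $h_l$ in the resulting composite) to the RHS configuration whose threshold $i_k$ equals the number of $h_l$'s placed directly in $f$ in positions weakly left of $g_k$, and whose internal insertion datum for each $g_k$ records the placement of the $h_l$'s that landed inside $g_k$. Monotonicity of the $i_k$'s is forced by the monotonicity of the $g_k$-placements, and the shuffle constraints on both sides correspond exactly. Once this is laid out in terms of planar trees in which $f$ sits at the root, the $g_k$'s at a middle layer and the $h_l$'s at the leaves, injectivity and surjectivity become transparent.

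The substantive obstacle is the Koszul-sign bookkeeping. On the LHS, signs accumulate from two successive applications of $\bar\circ$; on the RHS, they appear partly as the explicit prefactor $(-1)^{\sum_{k}|g_k|\sum_{j\leqslant i_k}|h_j|}$ and partly hidden inside each inner $\bar\circ$. The explicit prefactor is precisely the cost of commuting each $g_k$ past the $h_j$'s that end up directly to its left in $f$'s argument list, while the remaining signs coming from the two LHS expansions should match those produced by the inner $\bar\circ$'s on the RHS, which by design insert the ``internal'' $h_l$'s into the slots of $g_k$. I would verify this by decomposing each rearrangement into adjacent transpositions of homogeneous elements and grouping the contributions by pairs $(g_k,h_l)$.

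A cleaner route, which I would write up if the direct approach becomes notationally unwieldy, is to recognize $\bar\circ$ as the standard brace operation $f\{g_1,\ldots,g_m\}$ on $\Hom(T^c(V),V)$ (\`a la Gerstenhaber–Voronov) and invoke the general brace composition identity; alternatively, an induction on $m$, with base case $m=1$ amounting to the classical pre-Lie identity for $\bar\circ$, reduces the sign-matching to a single adjacent-insertion step. Either formulation keeps the combinatorics manageable and isolates the sign computation as the one real calculation.
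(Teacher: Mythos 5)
Your proposed strategy is sound, and it is worth noting at the outset that the paper itself supplies no proof of this lemma: it is stated as a known fact, namely the standard composition identity for the brace operation $f\{g_1,\dots,g_m\}$ on $\Hom(T^c(V),V)$ going back to Gerstenhaber, Getzler and Voronov, which is exactly the ``cleaner route'' you identify at the end. So there is no in-paper argument to compare against, and your outline — expand both sides by the definition of $\bar\circ$, set up the tree-shaped bijection between insertion configurations, and check that the explicit prefactor $(-1)^{\sum_k|g_k|\sum_{j\leqslant i_k}|h_j|}$ accounts precisely for commuting each $g_k$ past the $h_j$'s that land directly to its left — is the standard direct proof. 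The one caveat is that what you have written is a plan rather than a proof: the Koszul-sign verification, which you correctly single out as ``the one real calculation,'' is not actually performed, and in this kind of statement that verification is essentially all of the content (the bijection of terms is easy; the danger is entirely in a misplaced sign, e.g.\ forgetting that the $h_j$'s absorbed into $g_k\bar\circ(h_{i_k+1},\dots)$ contribute to the degree of that composite when it is subsequently commuted past later $h_j$'s on the right-hand side). To make this submissible you should either carry out the sign computation — your suggestion of inducting on $m$, reducing to the $m=1$ pre-Lie case and then handling a single additional insertion, is a reasonable way to keep it finite — or simply cite the brace-calculus literature as the paper implicitly does.
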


Now, let's prove the theorem \ref{averaging-L-infty}.

{\noindent{\bf{Theorem \ref{averaging-L-infty}.}} \it Let $V$ be a graded space. Then $\calc_{AvA}(V)$ endowed with operations $\{l_n\}_{n\geqslant 1}$ defined above forms a $L_\infty$-algebra.
}

\begin{proof}
	
	By the definition of $L_\infty$-algebras, we need to check the equation in $C_{AvA}(V)$ :
	\begin{eqnarray*}{\color{red}(\ref{def-L-infty})}\sum_{i+j=n}\sum_{\sigma\in \mathrm{S(i,n-i)}}(-1)^{i(n-i)}\chi(\sigma,x_1,\dots,x_n)l_{n-i+1}\Big(l_i(x_{\sigma(1)},\dots,x_{\sigma(i)}),x_{\sigma(i+1)},\dots, x_{\sigma(n)}\Big)=0.
	\end{eqnarray*}
	
	If all $x_i$ are contained in $\calc_{A}(V)=\Hom(T^c(sV),sV)$, then equation \ref{def-L-infty} is just the Jacobi identity on the graded Lie algebra $\calc_{A}(A)$.
	Apart from this, by the definition of $l_n$, the term $l_{n-i+1}\Big(l_i(x_{\sigma(1)},\dots,x_{\sigma(i)}),x_{\sigma(i+1)},\dots, x_{\sigma(n)}\Big) $ is trivial unless there are exactly two elements in $\{x_1,\dots,x_n\}$ coming from $\calc_{A(V)}$ and all other $n-2$ elements are contained in  $\calc_{AvO}(V)_l$ or $\calc_{AvO}(V)_r$. We assume that $x_1=sh_1\in \Hom((sV)^{\ot n_1},sV)$ and $x_2=sh_2\in \Hom((sV)^{\otimes n_2},sV)$. Then $n$ must be $n_1+n_2+1$. And we assume $x_i=g_{i-2}\in \calc_{AvO}(V)_r$. When $g_i\in \Hom(k,V)\oplus\Hom(sV,V)\oplus\Hom(T^c(sV)_\geqslant 2,V)_l$, the proof is all the same.
	
	For simplicity, we assume that all $g_i$ are not in $\Hom(s,V)$. If there is some $g_i\in \Hom(k,V)$, the calculation is similar.

	So now, we are going to check the equation \ref{def-L-infty} for
	$x_1=sh_1,x_2=sh_2,x_3=g_1,\dots, x_{n_1+n_2+1}=g_{n_1+n_2-1}$ with $g_i\in \Hom(sV,V)\oplus\Hom(k,V)\oplus\Hom(sV,V)\oplus\Hom(T^c(sV)_\geqslant 2,V)_r$. Then the term $l_{n-i+1}\Big(l_i(x_{\sigma(1)},\dots,x_{\sigma(i)}),x_{\sigma(i+1)}$ vanishes unless $i=2,n_1+1,n_2+1$. Let's compute them one by one.
	
	\medskip
	
	{\color{red}(A)} When $i=2$, the $(2,n-2)$-shuffle is identity map, we have
	\begin{align*}
		&l_{n_1+n_2-1}(l_2(sh_1,sh_2),g_1,\dots,g_{n_1+n_2-1})\\
		=&l_{n_1+n_2-1}([sh_1,sh_2]_G,g_1,\dots,g_{n_1+n_2-1})\\
		=&\sum_{\pi\in S_{n_1+n_2-1}}(-1)^{\varepsilon}\Big\{s^{-1}(sh_1\bar{\circ} sh_2)\circ(sg_{\pi(1)},\dots,sg_{\pi(n_1+n_2-1)})\\
		&-(-1)^{(|g_{\pi(1)}|+1)(|h_1|+|h_2|)}g_{\pi(1)}\bar{\circ}\Big((sh_1\bar{\circ} sh_2)\circ(\id_{sV}\ot sg_{\pi(2)}\ot \dots \ot sg_{\pi(n)})\Big)\Big\}\\
		&+\sum_{\pi\in {S_{n_1+n_2-1}}}(-1)^{\varepsilon}(-1)^{(|h_1|+1)(|h_2|+1)+1}\Big\{s^{-1}(sh_2\bar\circ sh_1)\circ(sg_{\pi(1)},\dots,sg_{\pi(n_1+n_2-1)})\\
		&\ \ \ \ \ \ \ -(-1)^{(|g_{\pi(1)}|+1)(h_1+h_2)}g_{\pi(1)}\bar\circ \big((sh_2\bar\circ sh_2)\circ(\id_{sV}\ot sg_{\pi(2)}\ot \dots\ot sg_{\pi(n_1+n_2-1)})\big)\Big\}\\
		=&\sum_{\pi\in S_{n_1+n_2-1}}(-1)^{\varepsilon}\Big\{\sum\limits_{i=0}^{n_1-1}(-1)^{(|h_2|+1)(\sum\limits_{j=1}^i(|g_{\pi(j)}|+1))}  h_1\circ(sg_{\pi(1)},\dots,sg_{\pi(i)},sh_2\circ(sg_{\pi(i+1)},\dots,sg_{\pi(i+n_2)}), \dots,sg_{\pi(n_1+n_2-1)})\Big\}\\
		&+\sum_{\pi\in S_{n_1+n_2-1}}(-1)^{\varepsilon}(-1)^{1+(|g_{\pi(1)}|+1)(|h_1|+|h_2|)} g_{\pi(1)}\bar{\circ}\Big\{sh_1\circ\Big(sh_2\circ(\id_{sV}\ot sg_{\pi(2)}\ot \dots sg_{\pi(n_2)})\ot \dots\ot sg_{\pi(n_1+n_2-1)}\Big)\\
		&+\sum\limits_{i=1}^{n_1-1}(-1)^{(|h_2|+1)(\sum\limits_{j=2}^i(|g_{\pi(j)}|+1))}sh_1\circ\Big(\id_{sV}\ot sg_{\pi(2)}\ot \dots \ot sg_{\pi(i)}\ot sh_2\circ(sg_{\pi(i+1)},\dots,sg_{\pi(i+n_2)})\ot \dots \ot sg_{\pi(n_1+n_2-1)}\Big)\Big\}\\
		&+\sum_{\pi\in S_{n_1+n_2-1}}(-1)^{\varepsilon}(-1)^{(|h_1|+1)(|h_2|+1)+1} \sum\limits_{i=0}^{n_2-1}(-1)^{(|h_1|+1)(\sum\limits_{j=1}^i(|g_{\pi(j)}|+1))}\bullet\\
		&\ \ \ \ \ \ \Big\{h_2\circ\Big(sg_{\pi(1)}\ot \dots \ot sg_{\pi(i)}\ot sh_1\circ(sg_{\pi(i+1)}\ot \dots\ot sg_{\pi(i+n_1)})\ot \dots \ot sg_{\pi(n_1+n_2-1)}\Big)\Big\}\\
		&+\sum_{\pi\in S_{n_1+n_2-1}}(-1)^{\varepsilon}(-1)^{(|h_1|+1)(|h_2|+1)+(|g_{\pi(1)}|+1)(|h_1|+|h_2|)}\bullet \\
		&\ \ \ \ \ \ g_{\pi(1)}\bar{\circ}\Big\{sh_2\circ\Big(sh_1\circ\big(\id_{sV}\ot sg_{\pi(2)}\ot \dots \ot sg_{\pi(n_2)}\big)\ot \dots \ot sg_{\pi(n_1+n_2-1)}\Big)\\
		&+\sum_{i=1}^{n_2-1}(-1)^{(|h_1|+1)(\sum\limits_{j=2}^i(|g_{\pi(j)}|+1))}sh_2\circ\Big(\id_{sV},sg_{\pi(2)},\dots,sg_{\pi(i)},sh_1\circ\big(sg_{\pi(i+1)},\dots,sg_{\pi(i+n_2)}\big),\dots sg_{\pi(n_1+n_2-1)}\Big)\Big\}.
	\end{align*}
	
	where
	$(-1)^\varepsilon=\chi(\pi,g_1,\dots,g_{n_1+n_2-1})(-1)^{(n_1
		+n_2-1)(h_1+h_2)+\sum\limits_{k=1}^{n_1+n_2-2}\sum\limits_{j
			=1}^k|g_{\pi(j)}|}$.
	\bigskip
	
	{\color{red}(B)}  When $i=n_2+1$,
	
	\begin{align*}
		&\sum_{\rho\in
			S(n_2+1,n_1)}(-1)^{(n_2+1)n_1}\chi(\rho,x_1,\dots,x_{n_1+n
			_2+1})l_{n_1+1}\big(l_{n_2+1}(x_{\rho(1)},\dots,x_{\rho(
			n_2+1)}),\dots,x_{\rho(n_1+n_2+1)}\big)\\
		=&\sum_{\sigma\in S(n_2,n_1-1)}(-1)^{(n_2+1)n_1}\chi(\sigma,g_1,\dots,g_{n_1+n_2-1})(-1)^{(|h_1|+1)(|h_2|+1+\sum\limits_{j=1}^{n_2}|g_{\sigma(j)}|)+n_2+1}\bullet \\
		&\ \ \ \ l_{n_1+1}\Big(l_{n_2+1}\big(sh_2,g_{\sigma(1)},\dots,g_{\sigma(n_2)}\big),sh_1,g_{\sigma(n_2+1)},\dots,g_{\sigma(n_1+n_2-1)}\Big)\\
		=& \sum_{\sigma\in S(n_2,n_1-1)}(-1)^{(n_2+1)n_1}\chi(\sigma,g_1,\dots,g_{n_1+n_2-1})(-1)^{(|h_1|+1)(|h_2|+1+\sum\limits_{j=1}^{n_2}|g_{\sigma(j)}|)+n_2+1}\bullet\\
		&(-1)^{(|h_1|+1)(|h_2|+1+\sum\limits_{j=1}^{n_2}|g_{\sigma(j)}|+n_2-1)+1} l_{n_1+1}\Big(sh_1,\underbrace{l_{n_2+1}(sh_2,g_{\sigma(1)},\dots,g_{\sigma(n_2)})}_{\color{red}:=\widehat{h_2}},g_{\sigma(n_2+1)},\dots,g_{\sigma(n_1+n_2-1)}\Big)\\
		=& \sum_{\sigma\in S(n_2,n_1-1)}\chi(\sigma,g_1,\dots,g_{n_1+n_2-1})(-1)^{(|h_1|+1)(n_2-1)+(n_2+1)n_1+n_2}l_{n_1+1}\Big(sh_1,\widehat{h_2},g_{\sigma(n_2+1)},\dots,g_{\sigma(n_2+n_1-1)}\Big)\\
		=&\sum_{\sigma\in S(n_2,n_1-1)}\sum_{\tau\in S_{n_1-1}}(-1)^{\theta_1}\Big\{\underbrace{h_1\circ\Big(s\widehat{h_2}\ot sg_{\sigma(n_2+\tau(1))}\ot \dots \ot sg_{\sigma(n_2+\tau(n_1-1))}\Big)}_{\color{red}B_1}\\
		&-(-1)^{(|\widehat{h_2}|+1)(|h_1+1|)}\underbrace{\widehat{h_2}\bar\circ\Big(sh_1\circ\big(\id_{sV}\ot sg_{\sigma(n_2+\tau(1))}\ot \dots\ot sg_{\sigma(n_2+\tau(n_1-1))}\big)\Big)}_{\color{red}B_2}\Big\}\\
		&+\sum_{\sigma\in S(n_2,n_1-1)}\sum_{\tau\in S(n_1-1)}\sum_{i=1}^{n_1-1}(-1)^{\theta_{2}} \Big\{\underbrace{h_1\circ\Big(sg_{\sigma(n_2+\tau(1))}\ot \dots \ot sg_{\sigma(n_2+\tau(i))}\ot s\widehat{h_2}\ot \dots \ot sg_{\sigma(n_2+\tau(n_1-1))}\Big)}_{\color{red}B_3}\\
		&-(-1)^{(|h_1|+1)(|g_{\sigma(n_2+\tau(1))}|+1)}\underbrace{g_{\sigma(n_2+\tau(1))}\bar\circ\Big(sh_1\circ\big(\id_{sV}\ot sg_{\sigma(n_2+\tau(2))}\ot \dots \ot sg_{\sigma(n_2+\tau(i))}\ot s\widehat{h_2}\ot \dots \ot sg_{\sigma(n_2+\tau(n_1-1))}\big)\Big)}_{\color{red}B_4}\Big\}
	\end{align*}
	
	where \begin{align*}(-1)^{\theta_1}=&\chi(\sigma,g_1,\dots,g_{n_1+n_2-1})\chi(\tau,g_{\sigma(n_2+1)},\dots,g_{\sigma(n_2+n_1-1)})\\
		&\cdot(-1)^{(|h_1|+1)(n_2-1)+(n_2+1)n_1+n_2}(-1)^{n_1(|h_1|+1)+(n_1-1)(|\widehat{h_2}|)+\sum\limits_{k=1}^{n_1-2}\sum\limits_{j=1}^k|g_{\sigma{(n_2+\tau(j))}}|}.\\
		(-1)^{\theta_2}=&\chi(\sigma,g_1,\dots,g_{n_1+n_2-1})\chi(\tau,g_{\sigma(n_2+1)},\dots,g_{\sigma(n_2+n_1-1)})\cdot(-1)^{(|h_1|+1)(n_2-1)+(n_2+1)n_1+n_2}\\
		&\cdot(-1)^{n_1(|h_1|+1)+\sum\limits_{k=1}^{n_1-2}\sum\limits_{j=1}^k|g_{\sigma(n_2+\tau(j))}|+\sum\limits_{j=1}^i|g_{\sigma(n_2+\tau(j))}|+(n_1-i-1)|\widehat{h_2}|+(|\widehat{h_2}|)(\sum\limits_{j=1}^i|g_{\sigma(n_2+\tau(j))}|)+i}
	\end{align*}
	
	\bigskip
	
	Let's compute the terms $B_1,B_2,B_3,B_4$ together with their coefficients.	
	
	\begin{align*}
		{\color{red}B_1}=&\sum_{\sigma\in S(n_2,n_1-1)}\sum_{\tau\in S_{n_1-1}}(-1)^{\theta_1}h_1\circ(s\widehat{h_2}\ot \dots\ot sg_{\sigma(n_2+\tau(1))}\ot \dots\ot sg_{\sigma(n_2+\tau(n_1-1))})\\
		=&\sum_{\sigma\in S(n_2,n_1-1)}\sum_{\tau\in S_{n_1-1}}(-1)^{\theta_1}h_1\circ\Big(sl_{n_2+1}(sh_2,g_{\sigma(1)},\dots,g_{\sigma(n_2)}),sg_{\sigma(n_2+\tau(1))}\ot \dots \ot sg_{\sigma(n_2+\tau(n_1-1))}\Big)\\
		=&\sum_{\sigma\in S(n_2,n_1-1)}\sum_{\tau\in S_{n_1-1}}(-1)^{\theta_1}\sum_{\delta\in S_{n_2}}\chi(\delta,g_{\sigma(1)},\dots, g_{\sigma(n_2)})(-1)^{n_2(|h_2|+1)+\sum\limits_{k=1}^{n_2-1}\sum\limits_{j=1}^k|g_{\sigma(\delta(j))}|}\bullet\\
		&\ \ \  h_1\circ\Big\{\Big(sh_2\circ(sg_{\sigma\delta(1)},\dots,sg_{\sigma\delta(n_2)})\ot  sg_{\sigma(n_2+\tau(1))}\ot \dots \ot sg_{\sigma(n_2+\tau(n_1-1))}\Big)-(-1)^{(|g_{\sigma\delta(1)}|+1)(|h_2|+1)}\bullet\\
		&\ \ \ \ \Big(sg_{\sigma\delta(1)}\bar\circ\big(sh_2\circ (\id_{sV}\ot sg_{\sigma\delta(2)}\ot \dots\ot sg_{\sigma\delta(n_2)})\big)\ot sg_{\sigma(n_2+\tau(1))}\ot \dots \ot sg_{\sigma(n_2+\tau(n_1-1))}\Big)\Big\}\\
		=&\sum_{\pi\in S(n_1+n_2-1)}(-1)^{\eta_1}h_1\circ\Big\{\Big(sh_2\circ\big(sg_{\pi(1)}\ot \dots \ot sg_{\pi(n_2)}\big)\ot sg_{\pi(n_2+1)}\ot \dots \ot sg_{\pi(n_2+n_1-1)}\Big)\\
		&-(-1)^{(|g_{\pi(1)}|+1)(|h_2|+1)}\Big(sg_{\pi(1)}\bar\circ\big(sh_2\circ(\id_{sV}\ot sg_{\pi(2)}\ot \dots\ot sg_{\pi(n_2)})\big)\ot sg_{\pi(n_2+1)}\ot \dots \ot sg_{\pi(n_2+n_1-1)}\Big)\Big\}
	\end{align*}
	where
	\begin{align*}(-1)^{\eta_1}=&(-1)^{\theta_1}\chi(\delta,g_{\sigma(1)},\dots,g_{\sigma{n_2}})(-1)^{n_2(|h_2|+1)+\sum\limits_{k=1}^{n_2-1}\sum\limits_{j=1}^k|g_{\sigma(\delta(j))}|}\\
		=&\chi(\pi,g_1,\dots,g_{n_1+n_2-1})(-1)^{(|h_1|+|h_2|)(n_1+n_2-1)+1+\sum\limits_{k=1}^{n_1+n_2-2}\sum\limits_{j=1}^k|g_{\pi(j)}|}.
	\end{align*}
	
	Notice that, in the last equality above, we use lemma \ref{permutation} to replace the triple $(\delta,\sigma, \tau)$ by its corresponding permutation $\pi\in S_{n_1+n_2-1}$ and we use the fact
	$$\chi(\pi,g_1,\dots,g_{n_1+n_2-1})=\chi(\sigma,g_1,\dots,g_{n_1+
		n_2-1})\chi(\tau,g_{\sigma(n_2+1)},\dots,g_{\sigma(n_2+n_1-1)})\chi(
	\delta,g_{\sigma(1)},\dots,g_{\sigma(n_2)}).$$	\medskip
	
	Similarly, we have
	\begin{align*}
		{\color{red}B_2}=&\sum_{\sigma\in S(n_2,n_1-1)}\sum_{\tau\in S_{n_1-1}}(-1)^{\theta_1}(-1)^{1+(|\widehat{h_2}|+1)(|h_1|+1)}
		\widehat{h_2}\bar\circ\Big(sh_1\circ(\id_{sV}\ot sg_{\sigma(n_2+\tau(1))}\ot \dots \ot sg_{\sigma(n_2+\tau(n_1-1))})\Big)\\
		=&\sum_{\pi\in S(n_1+n_2-1)}(-1)^{\eta_2}\Big\{h_2\circ\Big(sg_{\pi(1)}\ot \dots\ot sg_{\pi(n_2)}\Big)-(-1)^{(|h_2|+1)(|g_{\pi(1)}|+1)}\bullet\\ &\Big(g_{\pi(1)}\bar\circ\big(sh_2\circ(\id_{sV}\ot sg_{\pi(2)}\ot \dots \ot sg_{\pi(n_2)})\big)\Big)\Big\}\bar\circ\underbrace{\Big(sh_1\circ\big(\id_{sV}\ot sg_{\pi(n_2+1)}\ot \dots \ot sg_{\pi(n_2+n_1-1)}\big)\Big)}_{\color{red}:=\widetilde{h_1}}\\
		=&\sum_{\pi\in S(n_1+n_2-1)}(-1)^{\eta_2}\Big\{\sum_{k=1}^{n_2}(-1)^{(\sum\limits_{j=k+1}^{n_2}(|g_{\pi(j)}|+1))(|\widetilde{h_1}|)}h_2\circ\Big(sg_{\pi(1)}\ot \dots\ot sg_{\pi(k)}\bar\circ\widetilde{h_1}\ot sg_{\pi(k+1)}\ot \dots \ot sg_{\pi(n_2)}\Big)\Big\}\\
		&\\
		&+(-1)^{\eta_2}(-1)^{1+(|h_2|+1)(|g_{\pi(1)}|+1)}\Big\{g_{\pi(1)}\bar\circ\Big(sh_2\circ(\id_{sV}\ot sg_{\pi(2)}\ot \dots\ot sg_{\pi(n_2)}),\widetilde{h_1}\Big)\\
		&+(-1)^{|\widetilde{h_1}|\big(\sum\limits_{j=2}^{n_2}(|g_{\pi(j)}|+1)\big)}g_{\pi(1)}\bar\circ\Big(sh_2\circ\big(\widetilde{h_1}\ot sg_{\pi(2)}\ot \dots \ot sg_{\pi(n_2)}\big)\Big)\\
		&+\sum_{k=2}^{n_2}(-1)^{|\widetilde{h_1}|\big(\sum\limits_{j=k+1}^{n_2}(|g_{\pi(j)}|+1)\big)}g_{\pi(1)}\bar\circ\Big(sh_2\circ\big(\id_{sV}\ot sg_{\pi(2)}\ot \dots \ot sg_{\pi(k)}\bar\circ\widetilde{h_1}\ot sg_{\pi(k+1)}\ot \dots \\
		&\dots \ot sg_{\pi(n_2)}\big)\Big)+(-1)^{|\widetilde{h_1}|\big(\sum\limits_{j=2}^{n_2}(|g_{\pi(j)}|+1)+|h_2|+1\big)}g_{\pi(1)}\bar\circ\Big(\widetilde{h_1},sh_2\circ(\id_{sV}\ot sg_{\pi(2)}\ot \dots\ot sg_{\pi(n_2)})\Big)\Big\}
	\end{align*}
	where \begin{align*}(-1)^{\eta_2}=&(-1)^{\eta_1}(-1)^{1+(|\widehat{h_2}|+1)(|h_1|+1)}\\
		=&\chi(\pi,g_1,\dots,g_{n_1+n_2-1})(-1)^{(|h_1|+|h_2|)(n_1+n_2-1)+1+\sum\limits_{k=1}^{n_1+n_2-2}\sum\limits_{j=1}^k|g_{\pi(j)}|+1+(|\widehat{h_2}|+1)(|h_1|+1)}\\
		=&\chi(\pi,g_{1},\dots,g_{n_1+n_2-1})(-1)^{(|h_1|+|h_2|)(n_1+n_2-1)+\sum\limits_{k=1}^{n_1+n_2-2}\sum\limits_{j=1}^k|g_{\pi(j)}|+\big(|h_2|+n_2-1+\sum\limits_{j=1}^{n_2}|g_{\pi(j)}|\big)\big(|h_1|+1\big)}
	\end{align*}
	
	\begin{align*}
		{\color{red} B_3}=&\sum_{\sigma\in S(n_2,n_1-1)}\sum_{\tau\in S(n_1-1)}\sum_{i=1}^{n_1-1}(-1)^{\theta_2}h_1\circ\Big\{sg_{\sigma(n_2+\tau(1))}\ot \dots \ot sg_{\sigma{n_2+\tau(i)}}\ot s\widehat{h_2}\ot \dots \ot sg_{\sigma(n_2+\tau(n_1-1))}\Big\}\\
		=&\sum_{\pi\in S(n_2+n_1-1)}\sum_{i=1}^{n_1-1}(-1)^{\eta_3}h_1\circ\Big\{sg_{\pi(n_2+1)}\ot \dots \ot sg_{\pi(n_2+i)}\ot sh_2\circ(sg_{\pi(1)}\ot \dots \ot sg_{\pi(n_2)})\ot \dots \ot sg_{\pi(n_1+n_2-1)}\\
		&	-(-1)^{(|h_2|+1)(|g_{\pi(1)}|+1)}sg_{\pi(n_2+1)}\ot \dots \ot sg_{\pi(n_2+i)}\ot sg_{\pi(1)}\bar\circ\Big(sh_2\circ(\id_{sV}\ot sg_{\pi(2)}\ot \dots \ot sg_{\pi(n_2))}\Big)\ot \dots\\
		&\dots \ot sg_{\pi(n_1+n_2-1)}\Big\}
	\end{align*}
	where \begin{align*}(-1)^{\eta_3}=&(-1)^{\theta_2}\chi(\delta,g_{\sigma(1)},\dots,g_{\sigma(n_2)})(-1)^{n_2(|h_2|+1)+\sum\limits_{k=1}^{n_2-1}|g_{\pi(j)}|}\\
		=&\chi(\sigma,g_1,\dots,g_{n_1+n_2-1})\chi(\tau,g_{\sigma(n_2+1)},\dots,g_{\sigma(n_2+n_1-1)})\cdot(-1)^{(|h_1|+1)(n_2-1)+(n_2+1)n_1+n_2}\\
		&\cdot(-1)^{n_1(|h_1|+1)+\sum\limits_{k=1}^{n_1-2}\sum\limits_{j=1}^k|g_{(\sigma+\tau(j))}|+\sum\limits_{j=1}^i|g_{\sigma(n_2+\tau(j))}|+(n_1-i-1)|\widehat{h_2}|+|\widehat{h_2}|\big(\sum\limits_{j=n_2+1}^{n_2+i}|g_{\pi(j)}|\big)+i}\\
		&\cdot(-1)^{n_2(|h_2|+1)+\sum\limits_{k=1}^{n_2-1}|g_{\pi(j)}|}\chi(\delta,g_{\sigma(1)},\dots,g_{\sigma(n_2)})\\
		=&\chi(\pi,g_1,\dots,g_{n_1+n_2-1})(-1)^{(|h_1|+|h_2|)(n_1+n_2-1)+1+\sum\limits_{k=1}^{n_1+n_2-2}\sum\limits_{j=1}^k|g_{\pi(j)}|+(1+|h_2|+n_2+\sum\limits_{j=1}^{n_2}|g_{\pi(j)}|)(i+\sum\limits_{j=1}^i|g_{\pi(n_2+j)}|)}
	\end{align*}
	
	\begin{align*}
		{\color{red}B_4}=&\sum_{\pi\in S(n_2+n_1-1)}\sum_{i=1}^{n_1-1}(-1)^{\eta_4}g_{\pi(n_2+1)}\bar\circ\Big\{sh_1\circ\Big(\id_{sV}\ot sg_{\pi(n_2+2)}\ot \dots \ot sg_{\pi(n_2+i)}\ot sh_2\circ(sg_{\pi(1)}\ot \dots sg_{\pi(n_2)})\ot \dots\\
		& \ot sg_{\pi(n_2+n_1-1)}\Big)-(-1)^{(|h_2|+1)(|g_{\pi(1)}|+1)}sh_1\circ\Big(\id_{sV}\ot sg_{\pi(n_2+2)}\ot \dots \ot sg_{\pi(n_2+i)}\ot sg_{\pi(1)}\bar\circ\big(sh_2\circ(\i_{sV}\ot sg_{\pi(2)}\ot\dots\\
		&\dots sg_{\pi(n_2)})\big)\ot \dots\ot sg_{\pi(n_2+n_1-1)}\Big)\Big\}
	\end{align*}
	where $$(-1)^{\eta_4}=(-1)^{\eta_3}(-1)^{1+(|h_1|+1)(|g_{\pi(n_2+1)}|+1)}.$$
	
	\medskip
	
	{\color{red}(C)} When $i=n_1+1$,

	\begin{align*}
		&\sum_{\rho\in
			S(n_1+1,n_2)}(-1)^{(n_1+1)n_2}\chi(\rho,x_1,\dots,x_{n_1+n
			_2+1})l_{n_2+1}\big(l_{n_1+1}(x_{\rho(1)},\dots,x_{\rho(
			n_2+1)}),\dots,x_{\rho(n_1+n_2+1)}\big)\\
		=&\sum_{\sigma\in S(n_1,n_2-1)}(-1)^{(n_1+1)n_2}(-1)^{n_1+(|h_2|+1)(\sum\limits_{j=1}^{n_1}g_{\sigma(j)})}\chi(\sigma,g_1,g_2,\dots,g_{n_1+n_2-1})\bullet \\
		&l_{n_2+1}(l_1(sh_1,g_{\sigma(1)},\dots,g_{\sigma(n_1)}),sh_2,g_{\sigma(n_1+1)},\dots,g_{\sigma(n_1+n_2-1)})\\
		=&\sum_{\sigma\in S(n_1,n_2-1)}(-1)^{(n_1+1)n_2}(-1)^{n_1+(|h_2|+1)(\sum\limits_{j=1}^{n_1}g_{\sigma(j)})}(-1)^{(|h_2|+1)(\sum\limits_{j=1}^{n_1}g_{\sigma(j)}+|h_1|+n_1)+1}\chi(\sigma,x_1,\dots,x_{n_1+n_2+1})\bullet\\
		&	\ \ \ \ \ \ \ l_{n_2+1}(sh_2, l_{n_1+1}(sh_1,g_{\sigma(1)},\dots,g_{\sigma}(n_1)),g_{\sigma(n_1+1)},\dots,g_{\sigma(n_2+n_1-1)})\\
		=&\sum_{\sigma\in S(n_1,n_2-1)}(-1)^{(n_1+1)(n_2+1)+(|h_2|+1)(|h_1|+n_1)}\chi(\sigma,g_{1},\dots,g_{n_1})\bullet\\
		&\ \ \ \ \ \ \ l_{n_2+1}(sh_2, l_{n_1+1}(sh_1,g_{\sigma(1)},\dots,g_{\sigma}(n_1)),g_{\sigma(n_1+1)},\dots,g_{\sigma(n_2+n_1-1)})
	\end{align*}
	
	Switch the roles of $h_1,n_1$ and $h_2,n_2$ in the expansion of $$l_{n_1+1}\Big(sh_1,l_{n_2+1}(sh_2,g_{\sigma(1)},\dots,g_{\sigma(n_2)}),g_{\sigma(n_2+1)},\dots,g_{\sigma(n_1+n_2-1)}\Big),$$
	
	which has been computed in case {(B)}, then we will get the expansion of
	$$l_{n_2+1}(sh_2, l_{n_1+1}(sh_1,g_{\sigma(1)},\dots,g_{\sigma}(n_1)),g_{\sigma(n_1+1)},\dots,g_{\sigma(n_2+n_1-1)}).$$
	
	Then the expansion for case {(C)} comes out. Then, we can found that the terms of the same form appear exactly twice in the expansion of {{\color{red}(A)+(B)+(C)}}, and they have opposite signs. Thus $\mathrm {\color{red}(A)+(B)+(C)}=0$ holds. So $C_{AvA}(V)$ forms a $L_\infty$-algebra.
\end{proof}


\begin{thebibliography}{99}
	\bibitem{Agu} M.~Aguiar, \emph{Pre-Poisson algebras}, Lett. Math. Phys. \textbf{54} (2000) 263-277.



\bibitem{Bax} G.~Baxter, \emph{An analytic problem whose solution follows from a simple algebraic identity,}
Pacific J. Math. \textbf{10} (1960) 731-742.


\bibitem{Bir} G.~Birkhoff, \emph{Moyennes de fonctions born\'ees}, Coil. Internat. Centre Nat. Recherche
Sci. (Paris), Alg\`ebre Th\'eorie Nombres \textbf{24} (1949) 149-153.





\bibitem{Bong} N.~H.~Bong, \emph{Some apparent connection between baxter and averaging operators,}
J. Math. Anal. Appl. \textbf{56} (1976) 330-345.


\bibitem{Cao} W.~Cao, \emph{An algebraic study of averaging operators}, Ph.D. thesis, Rutgers University
at Newark (2000).

 \bibitem{CGX} Z.~Chen, H.~Ge and M.~Xiang,  \emph{Talk at The Third Conference of Operad Theory and Related Topics}, September 2020.

 \bibitem{KdF} J.~Kamp\'e de F\'eriet, \emph{Introduction to the statistical theory of turbulence, correlation
 and spectrum}, in The Institute of Fluid Dynamics and Applied Mathematics, Lecture
Series No. \textbf{8}, prepared by S.~I.~Pai (University of Maryland, 1950-51).

\bibitem{GamMil} J.~L.~B.~Gamlen and J.~B.~Miller, \emph{ Averaging and Reynolds operators on Banach
algebras II. spectral properties of averaging operators,} J. Math. Anal. Appl. \textbf{23} (1968)
183-197.

 \bibitem{GaoZhang}  X.~Gao and  T.~Zhang, \emph{Averaging algebras, rewriting systems
and Gr\"{o}bner每Shirshov bases},   J. Algebra Appl. \textbf{17} (2018), no. 7, 1850130, 26 pp.





\bibitem{Ger63} M. Gerstenhaber,   \emph{The cohomology structure
of an associative ring}. Ann. of Math. (2) \textbf{78} 1963
267-288.

\bibitem{Guo} L.~Guo, \emph{An Introduction to Rota每Baxter Algebra} (International Press (US) and Higher
Education Press (China), 2012).



\bibitem{KdF}  J.~Kamp\'e de F\'eriet, \emph{L'\'etat actuel du probl\`eme de la turbulence (I and II)}, La Sci.
A\'erienne \textbf{3} (1934) 9-34, \textbf{4} (1935) 12-52.

\bibitem{Kelley} J.~L.~Kelley, \emph{Averaging operators on $C^\infty (X)$}, Illinois J. Math. \textbf{2} (1958) 214-223.


\bibitem{LM02}T.~Lada and  M.~Markl, \emph{Strongly homotopy Lie algebras}, Communications in algebra, \textbf{23} (1995), no. 6, 2147-2161.

\bibitem{LS93} T.~Lada and J.~Stasheff, \emph{ Introduction to sh Lie algebras for physicists}, Internat. J. Theoret. Phys. \textbf{32} (1993), 1087-1103.

\bibitem{Loday} J.~L.~Loday, \emph{Dialgebras}, in Dialgebras and related operads, Lecture Notes in Math.
1763 (2002) 7-66.

\bibitem{Miller} J.~B.~Miller, \emph{Averaging and Reynolds operators on Banach algebra I, Representation
by derivation and antiderivations,} J. Math. Anal. Appl. \textbf{14} (1966) 527-548.

\bibitem{Moy} S.~T.~C.~Moy, \emph{Characterizations of conditional expectation as a transformation on
function spaces,} Pacific J. Math. \textbf{4} (1954) 47-63.



\bibitem{PeiGuo} J.~Pei and L.~Guo, \emph{Averaging algebras, Schr\"{o}der numbers, rooted trees and operads,}
J. Algebra Comb. \textbf{42} (2015) 73-109.

\bibitem{Rey} O.~Reynolds, \emph{On the dynamic theory of incompressible viscous fluids and the determination
of the criterion}, Phil. Trans. Roy. Soc. A \textbf{136} (1895) 123-164.


\bibitem{Rota} G.~C.~Rota, \emph{Reynolds operators}, Proceedings of Symposia in Applied Mathematics,
Vol. XVI (American Mathematical Society, Providence, R.I., 1964), pp. 70-83.

\bibitem{Rota} G.~C.~Rota, \emph{Baxter algebras and combinatorial identities I, II}, Bull. Amer. Math.
Soc. \textbf{75} (1969) 325-329, 330-334.

 \bibitem{STZ} Y.~Sheng, R.~Tang and C.~Zhu, \emph{The controling $L_\infty$-algebras, cohomology and homotopy of embedding tensors and Lie-Leibniz triples},  arXiv:2009.11096.
 
\bibitem{JS90} J.~Stasheff, \emph{Differential graded Lie algebras, quasi-Hopf algebras and higher homotopy algebras},  Quantum groups (Leningrad, 1990), 120-137, Lecture Notes in Math 1510, Springer, Berlin, 1992.
    
    
 \bibitem{Tri} A.~Triki, \emph{Extensions of positive projections and averaging operators}, J. Math. Anal.
 \textbf{153} (1990) 486每496.


	
	
\end{thebibliography}
\end{document}